\newtheorem{thm}{Theorem}[section]
\newtheorem*{thm*}{Theorem}
\newtheorem*{cor*}{Corollary}
\newtheorem*{prop*}{Proposition}
\newtheorem{cor}[thm]{Corollary}
\newtheorem{prop}[thm]{Proposition}
\newtheorem{lem}[thm]{Lemma}
\theoremstyle{definition}
\newtheorem{defn}[thm]{Definition}
\newtheorem*{notn*}{Notation}
\theoremstyle{remark}
\newtheorem*{idea*}{Idea}
\newcommand{\Spec}{{\rm Spec}}
\newcommand{\residual}{\rm Res}
\newcommand{\id}{\rm id}
\newcommand{\parahwt}{\mathcal{P}_{\theta}}
\newcommand{\disc}{\mathcal{O}}
\newcommand{\lieparahwt}{\mathfrak{p}_{\theta}}
\newcommand{\ppower}{\rm AS}
\let\c@equation\c@thm
\numberwithin{thm}{section}
\numberwithin{equation}{section}
\title[Tame Parahoric Nonabelian Hodge Correspondence]{Tame Parahoric Nonabelian Hodge Correspondence in Positive Characteristic over Algebraic Curves }
\author{Mao Li and Hao Sun}
\begin{document}
\pagenumbering{arabic}
\maketitle
\begin{abstract}
Let $G$ be a reductive group, and let $X$ be an algebraic curve over an algebraically closed field $k$ with positive characteristic. We prove a version of nonabelian Hodge correspondence for tame $G$-local systems over $X$ and logarithmic $G$-Higgs bundles over the Frobenius twist $X'$. To obtain a full description of the correspondence for the noncompact case, we introduce the language of parahoric group schemes to establish the correspondence.
\end{abstract}

\flushbottom



\renewcommand{\thefootnote}{\fnsymbol{footnote}}
\footnotetext[1]{Key words: parahoric group scheme, nonabelian Hodge correspondence}
\footnotetext[2]{MSC2020 Class: 14D20, 14C30, 20G15, 14L15}

\section{Introduction}

\subsection{Background}
Let $X$ be a smooth projective variety. When the characteristic is zero, Simpson gave a correspondence between Higgs bundles and local systems on $X$ \cite{simpson}. This correspondence is analytic in nature but it does not preserve the algebraic structure. In positive characteristic, the work of Ogus--Vologodsky and Chen--Zhu show that the connection between Higgs bundles and local systems is much closer \cite{Nonabelian Hodge prime char, Nonabelian Hodge prime char general}. Especially, when $X$ is an algebraic curve, Chen--Zhu shows that the stack of $G$-local systems on $X$ is a twisted version of the stack of $G$-Higgs bundles on $X'$, which is the Frobenius twist of $X$. This result is a crucial ingredient in the proof of the geometric Langlands conjecture in positive characteristic on curves \cite{Geometric langlands in prime char}.

For the noncompact case, Simpson established such a correspondence under ``tameness" condition in characteristic zero, and he introduced filtered (parabolic) Higgs bundles and filtered local systems to give a precise description of this correspondence \cite{Simp}. Inspired by Simpson's work, people first introduced parabolic $G$-Higgs bundles and parabolic $G$-local systems to establish this correspondence for principal bundles \cite{BGM}. However, the parabolic objects are not enough to establishing the correspondence completely. With a careful discussion of the local data, Boalch introduced the language of \emph{parahoric subgroups} and give a precise description of the correspondence locally \cite[\S 6]{Riemann Hilbert}. With the help of the local study, it is believed that the language of parahoric group is the correct one to give a full description of the nonabelian Hodge correspondence on noncompact curves for $G$-local systems and $G$-Higgs bundles \cite{BGM,HKSZ22,HS23,Parahoric Higgs}.

In positive characteristic, one may ask a similar question: \emph{how to construct the nonabelian Hodge correspondence on noncompact curves under ``tameness" condition?} In this paper, we establish a version of this correspondence: \emph{tame parahoric nonabelian Hodge correspondence in positive characteristic over curves}. The word ``tameness" has two stories here:
\begin{enumerate}
\item First, \emph{tameness} is a condition on Higgs fields introduced in \cite{Simp}, which can be regarded as \emph{regular singularities} on connections or \emph{logarithmic case} of Higgs bundles when extended to the compactification. The \emph{tameness} condition is a crucial property to establish the correspondence.

\item Second, since we work in positive characteristic, it is related to the concept of \emph{tamely ramified coverings}. In characteristic zero, Balaji--Seshadri found that parahoric torsors over $X$ correspond to $\Gamma$-equivariant bundles over $Y$, where $Y \rightarrow X$ is a Galois covering with Galois group $\Gamma$ \cite{Moduli of parahoric torsors}. When generalizing this approach to positive characteristic $p$, we suppose that the order of $\Gamma$ and the characteristic $p$ are coprime, which means that the covering $Y \rightarrow X$ is tamely ramified and the stack $[Y/\Gamma]$ is tame \cite{Tame Stack}.
\end{enumerate}
This problem has been investigated in \cite{tame for GLn} for local systems with nilpotent residues. Compared with this work, we do not impose any condition on the residue of logarithmic connections. Moreover, some of the investigations are motivated by the results in \cite{Riemann Hilbert}, especially in the local case.

\subsection{Main Results}
\subsubsection{Local Case}
Let $G$ be a connected reductive linear algebraic group with maximal torus $T$ over an algebraically closed field $k$. Let $\mathcal{O}:=k[[z]]$ and $\mathcal{K}:=k((z))$. A parahoric (sub)group is usually understood as a subgroup of $G(\mathcal{K})$ (see \cite{BT2,Twisted loop groups}). In characteristic zero, given a weight (rational cocharacter) $\theta$ of $T$, Boalch constructed a special parahoric group $\mathcal{P}_{\theta}(\mathcal{O}) \subseteq G(\mathcal{K})$ to study the tame Riemann-Hilbert correspondence locally \cite{Riemann Hilbert}. As a special case, when $\theta=0$, then $\mathcal{P}_\theta(\mathcal{O})$ is exactly $G(\mathcal{O})$, which goes back to the case of $G$-bundles.

We first study this problem locally on a formal disc $\mathbb{D}:={\rm Spec}(\mathcal{O})$ with $\theta=0$. In this case, a logahoric $\mathcal{P}_\theta(\mathcal{O})$-connection is exactly a logarithmic $G$-connection (or a \emph{tame $G$-local system}) over $\mathbb{D}$, i.e.
\begin{align*}
d - A\frac{dz}{z}, \quad A=\sum_{i \geq 0}  a_i z^i \in \mathfrak{g}(\mathcal{O}),
\end{align*}
and a logahoric $\mathcal{P}_\theta(\mathcal{O})$-Higgs field is a logarithmic $G$-Higgs field over $\mathbb{D}$. The word ``logahoric" is a blend of logarithmic and parahoric introduced in \cite{Riemann Hilbert}. Fixing a splitting of the Lie algebra $\pi: \mathfrak{t}_{\mathbb{F}_p} \hookrightarrow \mathfrak{t}$, we can decompose $a_0$ as
\begin{align*}
a_0=s+n=\tau+\sigma+n,
\end{align*}
where $s$ (resp. $n$) is the semisimple part (resp. nilpotent part) of $a_0$, and $\tau \in \mathfrak{t}_{\mathbb{F}_p}$ is called the \emph{rational} part of $s$, while $\sigma$ is called the \emph{irrational} part of $s$. By generalizing a classical calculation to positive characteristic, a logarithmic $G$-connection is equivalent to one in the form $d - B\frac{dz}{z}$  under the gauge action, where $B=\sum_{i \geq 0}b_i z^i$ such that $b_i$ lies in the generalized eigenspace of the operator $[b_0,-]$ with eigenvalue $i$ (see Lemma~\ref{standard form 1}). Therefore, we can assume that given a logarithmic $G$-connection $d - A\frac{dz}{z}$, the term $a_i$ lies in the generalized eigenspace of the operator tor $[\tau,-]$ with eigenvalue $i$.

We first consider the case that the semisimple part of $a_0$ is irrational, i.e. $\tau=0$.
\begin{itemize}
\item ${\rm Locsys}^{tame}_{G,irr}(\mathbb{D})$ is the category of tame $G$-local systems $(E,\nabla)$ on the formal disc $\mathbb{D}$ such that the semisimple part of the residue of $\nabla$ is irrational (under gauge action).
\item ${\rm Higgs}^{tame}_{G,irr}(\mathbb{D}')$ is the category of logarithmic $G$-Higgs bundles $(E,\phi)$ on the Frobenius twist of $\mathbb{D}$ such that the semisimple part of the residue of $\phi$ is irrational (under adjoint action).
\end{itemize}
We prove that these two categories are equivalent (Proposition~\ref{standard form in irrational})
\begin{align*}
{\rm Locsys}^{tame}_{G,irr}(\mathbb{D}) \cong {\rm Higgs}^{tame}_{G,irr}(\mathbb{D}').
\end{align*}
If the rational part $\tau$ of $a_0$ is nontrivial, we define the category ${\rm Locsys}^{tame}_{G,\tau}(\mathbb{D})$ similarly. We prove the equivalence
\begin{align*}
    {\rm Locsys}^{tame}_{G,\tau}(\mathbb{D}) \cong {\rm Higgs}^{tame}_{G'_{\tau},irr}(\mathbb{D}'),
\end{align*}
where ${\rm Higgs}^{tame}_{G'_{\tau},irr}(\mathbb{D}')$ is the category of logahoric $G'_{\tau}(\mathcal{O})$-Higgs bundles over $\mathbb{D}'$, where $G'_{\tau}(\mathcal{O})$ is regarded as a parahoric group over $\mathbb{D}'$ (Lemma~\ref{parahoric of rational semisimple}), such that semisimple part of the residue is irrational (Proposition~\ref{standard form in general}). This finishes the discussion of the local case and the correspondence we obtain is called \emph{local tame nonabelian Hodge correspondence}.

Next, we come to the parahoric case over $\mathbb{D}$. We fix a tame weight $\theta$, which is a weight such that the denominator $d$ is coprime to $p$. Similar to the above discussion, we define two categories:
\begin{itemize}
\item ${\rm Locsys}^{tame}_{\mathcal{P}_\theta,\tau}(\mathbb{D})$ is the category of tame $\mathcal{P}_\theta(\mathcal{O})$-local systems on $\mathbb{D}$ such that the semisimple part of the residue is $\tau$ (under gauge action);
\item ${\rm Higgs}^{tame}_{G'_{\theta+\tau},irr}(\mathbb{D}')$ is the category of logahoric $G'_{\theta+\tau}(\mathcal{O})$-Higgs bundles on $\mathbb{D}'$ such that the semisimple part of the residue is irrational (under adjoint action).
\end{itemize}
The following diagram
\begin{center}
\begin{tikzcd}
{\rm Locsys}^{tame}_{\mathcal{P}_\theta,\tau}(\mathbb{D}_x )  \arrow[d,dotted] \arrow[rr,"\text{Proposition~\ref{parahoric connection equivariant}}"]  & & {\rm Locsys}^{tame}_{G, \theta+\tau}([\mathbb{D}_y /\Gamma]) \arrow[d, "\text{Proposition~\ref{standard form in general}}"] \\
{\rm Higgs}^{tame}_{G'_{\theta+\tau},irr}(\mathbb{D}'_x )  & & {\rm Higgs}^{tame}_{G'_{\theta+\tau}, irr}([\mathbb{D}'_y /\Gamma]) \arrow[ll, "\text{Theorem~\ref{equivalence for higgs bundles}}"]
\end{tikzcd}
\end{center}
implies the equivalence of these two categories. Actually, each arrow in the diagram can be understood as an equivalence and the horizontal arrows are discussed in \S\ref{sect corresp} based the the correspondence studied by Balaji--Seshadri \cite{Moduli of parahoric torsors}.
\begin{thm}[Theorem \ref{classification for parahoric connection}]
We have an equivalence of categories
\begin{align*}
{\rm Locsys}^{tame}_{\mathcal{P}_\theta,\tau}(\mathbb{D}) \cong {\rm Higgs}^{tame}_{G'_{\theta+\tau},irr}(\mathbb{D}').
\end{align*}
Moreover, the $p$-curvature of the parahoric $\mathcal{P}_\theta(\mathcal{O})$-connection is zero if and only if the corresponding parahoric $G'_{\theta+\tau}(\mathcal{O})$-Higgs bundle has zero Higgs field.
\end{thm}
\noindent This correspondence is called the \emph{local tame parahoric nonabelian Hodge correspondence}.

\subsubsection{Global Case}
After the local description of tame parahoric nonabelian Hodge correspondence, we move to the global case and we consider the case $\theta=0$ first. Let $X$ be a smooth algebraic curve with a given reduced effective divisor $D$. Let $\mathscr{L}:=\Omega_X(D)$, and denote by $\mathscr{L}'$ the corresponding line bundle over $X'$, which is the Frobenius twist of $X$, and we have $Fr^* \mathscr{L}' \cong \mathscr{L}^p$, where $Fr: X \rightarrow X'$ is the Frobenius morphism. Let $B_{\mathscr{L}'}$ be the $\mathscr{L}'$-twisted Hitchin base. Then we have a natural morphism
\begin{align*}
h_p: {\rm Locsys}^{tame}_G \rightarrow B_{\mathscr{L}'},
\end{align*}
which is called \emph{$p$-Hitchin morphism} (see \cite[Proposition 3.1]{Nonabelian Hodge prime char} or \S\ref{sect global}). The $p$-Hitchin morphism helps us to construct a group schemes $J^p$ over $X \times B_{\mathscr{L}'}$ equipped with a natural action
\begin{align*}
{\rm Locsys}^{tame}_{J^p} \times_{B_{\mathscr{L}'}} {\rm Locsys}^{tame}_{G} \rightarrow {\rm Locsys}^{tame}_{G}.
\end{align*}
Now we define a substack $\mathcal{A} \subseteq {\rm Locsys}^{tame}_{J^p}$, of which the connections are with vanishing $p$-curvature, and let $\mathcal{X}$ be the stack parametrizing $(E,\nabla,\Psi)$ such that
\begin{itemize}
\item $(E,\nabla)$ is a tame $G$-local system with zero $p$-curvature,
\item $\Psi$ is a horizontal section of ${\rm Ad}(E)\otimes Fr^*\mathscr{L}'$
\end{itemize}
Clearly, $\mathcal{X}$ is an algebraic stack over $B_{\mathscr{L}'}$. In the meantime, we can construct a vector bundle $\mathscr{B}_{\mathscr{L}'}$ over $B_{\mathscr{L}'}$, of which the fiber is $H^0(X', {\rm Lie}(J'_{b'}) \otimes \mathscr{L}')$ for each $b' \in B_{\mathscr{L}'}$. We prove that ${\rm Locsys}^{tame}_{J^p}$ is smooth over $\mathscr{B}_{\mathscr{L}'}$ (Lemma~\ref{smoothness of locsys}). Then, we define the stack $\mathcal{H}$
\begin{center}
\begin{tikzcd}
\mathcal{H}  \arrow[d,dotted] \arrow[r,dotted]  & {\rm Locsys}^{tame}_{J^p} \arrow[d] \\
B_{\mathscr{L}'} \arrow[r, "\tau'"] & \mathscr{B}_{\mathscr{L}'}
\end{tikzcd}
\end{center}
as the pullback of the diagram. With the help of the local study in \S\ref{sect local}, we follow Chen--Zhu's approach to prove the main results in this paper.
\begin{thm}[Theorem~\ref{first structure theorem for G}]
There exists a canonical isomorphism of stacks over $B_{\mathscr{L}'}$:
\begin{align*}
\mathcal{H}\times^{\mathcal{A}}\mathcal{X}\rightarrow {\rm Locsys}^{tame}_{G}.
\end{align*}
\end{thm}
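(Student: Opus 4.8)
The plan is to realize both sides as fibrations over $B_{\mathscr{L}'}$ that are torsors under one and the same commutative group stack, and then to match them by an explicit equivariant morphism. First I would pin down the group-stack structure on ${\rm Locsys}^{tame}_{J^p}$. Since the $p$-curvature of a $J^p$-connection is Frobenius-linear it descends to a section of ${\rm Lie}(J')\otimes\mathscr{L}'$ over $X'$; because $J^p$ is commutative this assignment is additive, giving a homomorphism
\[
\psi\colon {\rm Locsys}^{tame}_{J^p}\longrightarrow \mathscr{B}_{\mathscr{L}'}
\]
whose kernel is precisely the vanishing-$p$-curvature substack $\mathcal{A}$, so that there is an exact sequence
\[
1\longrightarrow \mathcal{A}\longrightarrow {\rm Locsys}^{tame}_{J^p}\xrightarrow{\ \psi\ }\mathscr{B}_{\mathscr{L}'}\longrightarrow 1
\]
of Picard stacks over $B_{\mathscr{L}'}$. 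By Lemma~\ref{smoothness of locsys} the map $\psi$ is smooth, so pulling back along the section $\tau'$ exhibits $\mathcal{H}$ as an $\mathcal{A}$-torsor over $B_{\mathscr{L}'}$. In parallel, Cartier descent identifies $\mathcal{A}$ with the Picard stack of $J'$-Higgs torsors on $X'$ and identifies $\mathcal{X}$ with the stack of tame $G$-Higgs bundles on $X'$: the connection of zero $p$-curvature descends to a $G$-bundle $E'$ and the horizontal section $\Psi$ descends to a Higgs field $\Psi'\in H^0(X',{\rm Ad}(E')\otimes\mathscr{L}')$, under which $\mathcal{X}\to B_{\mathscr{L}'}$ becomes the tame $\mathscr{L}'$-twisted Hitchin fibration and a torsor under $\mathcal{A}$ over the regular locus.

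Next I would build the morphism. Given a point $(\mathfrak{h},(E,\nabla,\Psi))$ of $\mathcal{H}\times_{B_{\mathscr{L}'}}\mathcal{X}$ lying over $b'\in B_{\mathscr{L}'}$, the class $\mathfrak{h}$ is a $J^p$-connection with $\psi(\mathfrak{h})=\tau'(b')$, i.e. a canonical connection carrying exactly the prescribed $p$-curvature datum, while $(E,\nabla)$ is a flat connection of zero $p$-curvature whose descended Higgs field $\Psi'$ has characteristic polynomial $b'$. I would then feed $\mathfrak{h}$ and $(E,\nabla)$ into the action morphism
\[
{\rm Locsys}^{tame}_{J^p}\times_{B_{\mathscr{L}'}}{\rm Locsys}^{tame}_{G}\longrightarrow {\rm Locsys}^{tame}_{G},
\]
where the embedding $J^p\hookrightarrow{\rm Ad}$ used in the action is the one determined by $\Psi$ through the regular centralizer. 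The effect is to superimpose the $p$-curvature of $\mathfrak{h}$ onto $(E,\nabla)$, producing a $G$-local system whose $p$-curvature realizes $\Psi$ and whose $p$-Hitchin image is therefore $b'$. Changing $\mathfrak{h}$ and the basepoint of $\mathcal{X}$ by a common element of $\mathcal{A}$ leaves the output unchanged, so the construction descends to the contracted product $\mathcal{H}\times^{\mathcal{A}}\mathcal{X}$ and is manifestly a morphism over $B_{\mathscr{L}'}$; compatibility with the local picture along $D$ is guaranteed by the local tame parahoric correspondence (Theorem~\ref{classification for parahoric connection}).

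Finally I would check that the morphism is an isomorphism fiberwise over $B_{\mathscr{L}'}$ by comparing torsor structures. Over a geometric point $b'$, the action makes ${\rm Locsys}^{tame}_{G}$ a torsor under ${\rm Locsys}^{tame}_{J^p}$; on the other side, the torsor class of $\mathcal{H}$ under $\mathcal{A}$, the $\mathcal{A}$-torsor structure of $\mathcal{X}$, and the exact sequence above combine to equip $\mathcal{H}\times^{\mathcal{A}}\mathcal{X}$ with a compatible ${\rm Locsys}^{tame}_{J^p}$-action, under which the constructed map is equivariant. A nonempty equivariant map of torsors under the same group stack is automatically an isomorphism, which finishes the proof once the torsor properties are in place. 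The hard part is exactly establishing these torsor properties, that is, the simple transitivity of the $J^p$-action on ${\rm Locsys}^{tame}_{G}$ and the $\mathcal{A}$-torsor structure of the Hitchin fibration $\mathcal{X}$ with the prescribed regular-singular behavior along $D$; this is the abelianization step, which I would reduce to the unramified structure theorem of \cite{Nonabelian Hodge prime char} away from $D$ and glue with the local tame parahoric correspondence at the punctures via descent.
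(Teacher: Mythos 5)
Your construction of the forward map coincides with the paper's: it is the action of ${\rm Locsys}^{tame}_{J^p}$ on ${\rm Locsys}^{tame}_{G}$ from \eqref{action on locsys}, applied to a point of $\mathcal{H}$ and a point of $\mathcal{X}$, with compatibility over $B_{\mathscr{L}'}$ supplied by Lemma~\ref{action over phitchin}. The gap is in your verification that this map is an isomorphism. You reduce it to ``a nonempty equivariant map of torsors under the same group stack is an isomorphism,'' which requires that the $J^p$-action on ${\rm Locsys}^{tame}_{G}$ be simply transitive fiberwise over $B_{\mathscr{L}'}$ and that $\mathcal{X}\rightarrow B_{\mathscr{L}'}$ carry an $\mathcal{A}$-torsor structure. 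Neither holds over the whole Hitchin base: the action of $J_{b^p}$ on a local system $(E,\nabla)$ factors through $a_{E,\Psi}\colon J_{b^p}\rightarrow{\rm Aut}(E)$, and when the $p$-curvature $\Psi$ is not everywhere regular (for instance over $b'=0$, where $\Psi$ is nilpotent) this map is far from exhausting the centralizer, so the action has large stabilizers and several orbits. You flag exactly this as ``the hard part'' and propose to import it from the unramified case, but the same failure already occurs in \cite{Nonabelian Hodge prime char}: there too ${\rm LocSys}_G$ is not a ${\rm LocSys}_{J^p}$-torsor over the Hitchin base, which is precisely why the result is stated as a twisted product with $\mathcal{X}$ rather than as a torsor statement.

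The paper sidesteps simple transitivity entirely. The only torsor input is Lemma~\ref{torsors over group stack}: $\mathcal{H}$ (and likewise $\mathcal{H}(-\tau')$, the fiber of ${\rm Locsys}^{tame}_{J^p}$ over the section $-\tau'$) is an $\mathcal{A}$-torsor over $B_{\mathscr{L}'}$. The inverse is then written down explicitly as
\begin{align*}
\mathcal{H}(-\tau')\times^{\mathcal{A}}{\rm Locsys}^{tame}_{G}\longrightarrow\mathcal{X},
\end{align*}
that is, one acts on $(E,\nabla)$ by a $J^p$-connection whose $p$-curvature is $-\tau'(b')$, killing the $p$-curvature of $\nabla$ while the original $p$-curvature survives as the horizontal section $\Psi$; well-definedness uses only the $\mathcal{A}$-torsor property of $\mathcal{H}(-\tau')$ together with Lemma~\ref{action over phitchin}, and the check that the two composites are the identity is a formal computation as in Proposition 3.9 and Theorem 3.12 of \cite{Nonabelian Hodge prime char}. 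To repair your argument, replace the torsor comparison by this explicit inverse. (A secondary point: the identification of $\mathcal{X}$ with a stack of Higgs bundles on $X'$ is not needed for this theorem and is subtler than plain Cartier descent --- it is Proposition~\ref{structure of X}, and the structure group there is the parahoric group scheme $G'_{\boldsymbol\tau}$ determined by the residues, not $G$ itself.)
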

Moreover, the structure of $\mathcal{X}$ can be described by the rational semisimple residues $\boldsymbol\tau=\{\tau_x| x \in D\}$.

\begin{prop}[Proposition~\ref{structure of X}]
The stack $\mathcal{X}$ is the disjoint union of $\mathcal{X}_{\boldsymbol\tau}$, where the union ranges over the all rational semisimple conjugacy classes of $\boldsymbol\tau$, and we have
\begin{align*}
\mathcal{X}_{\boldsymbol\tau} \cong {\rm Higgs}^{tame}_{G'_{\boldsymbol\tau}}.
\end{align*}
\end{prop}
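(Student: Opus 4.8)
The plan is to exhibit the disjoint decomposition as the fibers of a discrete invariant, and then to identify each piece by Frobenius (Cartier) descent combined with the local correspondence already established in \S\ref{sect local}.

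First I would show that the tuple $\boldsymbol\tau=\{\tau_x\mid x\in D\}$ of rational semisimple residues is locally constant on $\mathcal{X}$. At each $x\in D$ the residue of $\nabla$ decomposes as $\tau_x+\sigma_x+n_x$ with $\tau_x\in\mathfrak{t}_{\mathbb{F}_p}$ the rational part of the semisimple part; since $\tau_x$ is valued in the discrete set $\mathfrak{t}_{\mathbb{F}_p}$ (equivalently, it records the eigenvalues of $[a_0,-]$ lying in $\mathbb{F}_p$), its conjugacy class is rigid and cannot deform in a connected family. Hence the assignment $(E,\nabla,\Psi)\mapsto\boldsymbol\tau$ realizes $\mathcal{X}$ as the disjoint union of the open-and-closed substacks $\mathcal{X}_{\boldsymbol\tau}$, which is the first assertion.

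Next, for a fixed $\boldsymbol\tau$ I would build a functor $\mathcal{X}_{\boldsymbol\tau}\to{\rm Higgs}^{tame}_{G'_{\boldsymbol\tau}}$ by descent. Since $(E,\nabla)$ has vanishing $p$-curvature, Cartier descent over $X'\setminus D'$ produces a $G$-bundle $E'$ with $Fr^*E'\cong E$; the induced connection on ${\rm Ad}(E)$ again has zero $p$-curvature, so the horizontal section $\Psi$ descends, and because $Fr^*\mathscr{L}'\cong\mathscr{L}^p$ it yields a section $\Psi'$ of ${\rm Ad}(E')\otimes\mathscr{L}'$, i.e. an $\mathscr{L}'$-twisted Higgs field. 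Near each $x\in D$, the local correspondence (Proposition~\ref{standard form in general} together with Theorem~\ref{classification for parahoric connection}) identifies the tame connection of rational residue $\tau_x$ and zero $p$-curvature with a parahoric $G'_{\tau_x}(\mathcal{O})$-torsor over $\mathbb{D}'_x$ carrying the Higgs field coming from $\Psi$; by Lemma~\ref{parahoric of rational semisimple} this is precisely the parahoric type prescribed by $\tau_x$. Gluing the global descent on $X'\setminus D'$ to these local parahoric data produces a tame parahoric $G'_{\boldsymbol\tau}$-Higgs bundle, defining the functor. The inverse is Frobenius pullback: a parahoric $G'_{\boldsymbol\tau}$-Higgs bundle $(E',\Psi')$ pulls back to $(Fr^*E',\nabla_{\rm can},Fr^*\Psi')$, whose canonical connection has zero $p$-curvature and whose local type at $D$ is the tame connection with residue $\tau_x$ by the same local statements read backwards. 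That the two functors are mutually quasi-inverse follows from the equivalence in Cartier descent on $X'\setminus D'$ together with the local equivalences of Theorem~\ref{classification for parahoric connection} at the points of $D$; compatibility with the $B_{\mathscr{L}'}$-structure is immediate as both sides lie over the same Hitchin base.

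I expect the main obstacle to be the local-to-global matching of parahoric structures. Concretely, I must verify that the global Cartier descent of $(E,\nabla)$ agrees, in a formal neighborhood of each $x'\in D'$, with the parahoric torsor furnished by the local correspondence, and that the descended field $\Psi'$ satisfies the boundedness conditions cutting out ${\rm Higgs}^{tame}_{G'_{\boldsymbol\tau}}$. This amounts to checking that the Balaji--Seshadri equivariant dictionary of \S\ref{sect corresp} is compatible with Frobenius descent, so that the first-order poles at $D$ and the twist by $\Omega_{X'}(D')$ are transported correctly; here the tameness hypothesis that $|\Gamma|$ is prime to $p$ is exactly what keeps the relevant descent exact.
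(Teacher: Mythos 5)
Your overall strategy --- stratify $\mathcal{X}$ by the rational semisimple residue and identify each stratum via Cartier descent on $X'\setminus D'$ glued to a local analysis at the points of $D$ --- is the paper's strategy, but three steps are either missing or misdirected. First, you write the residue at $x$ as $\tau_x+\sigma_x+n_x$ and only extract the rational part as a locally constant invariant; the paper's preliminary lemma shows that vanishing $p$-curvature forces $\sigma_x=n_x=0$, i.e.\ the residue \emph{is} a rational semisimple element. Without this, the substacks $\mathcal{X}_{\boldsymbol\tau}$ (cut out by the residue being equal to $\tau_x$) would not cover $\mathcal{X}$, and the local normal form $d+\tau_x\frac{dz}{z}$ on which the whole identification rests would not be available. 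This fact, and your ``rigidity'' claim for the disjointness, both come from Lemma~\ref{normal form for connection with zero p curvature} (in its Artinian form), not from an abstract discreteness argument.

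Second, $\mathcal{X}_{\boldsymbol\tau}\cong{\rm Higgs}^{tame}_{G'_{\boldsymbol\tau}}$ is an isomorphism of stacks, so your pointwise construction (descend one $(E,\nabla,\Psi)$, glue) must be functorial in families over Artinian test rings. This is where the paper does the real work in Lemma~\ref{parahoric bundle}: it introduces the model $((\theta_\tau)_*\mathcal{O}_X(x),\nabla_\tau)$ and proves the scheme ${\rm Iso}$ of isomorphisms with $(E,\nabla)$ is smooth and surjective over $X'$, using Lemma~\ref{standard form over artinian rings}, Lemma~\ref{normal form for connection with zero p curvature} and Lemma~\ref{criterion for smoothness over a curve}. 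Third, the ``main obstacle'' you single out --- that the descended $\Psi$ lands in ${\rm Ad}(E')\otimes\mathscr{L}'$ for the parahoric group $G'_{\boldsymbol\tau}$ --- is not a matter of compatibility of the Balaji--Seshadri dictionary with Frobenius descent: no covering $Y\to X$ enters Proposition~\ref{structure of X} at all (that machinery is only needed later, for Proposition~\ref{structure of parahoric X}). The correct and elementary argument is the one the paper gives: after normalizing the connection to $d+\tau_x\frac{dz}{z}$ on the formal disc, a section $A\frac{dz}{z}$ is horizontal iff $dA+[A,\tau_x]\frac{dz}{z}=0$, which forces $A\in{\rm Lie}(G'_{\tau_x})(\mathcal{O}_{x'})$ by the very definition of $G'_{\tau_x}$ as the automorphism group of that connection.
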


For the parahoric case, we fix a collection of tame weights $\boldsymbol\theta=\{\theta_x, x \in D\}$.  With respect to the data $(X,D,\boldsymbol\theta)$, we can construct two objects:
\begin{itemize}
\item a parahoric group scheme $\mathcal{P}_{\boldsymbol\theta}$ over $X$;
\item a tamely ramified covering $Y \rightarrow X$.
\end{itemize}
Balaji--Seshadri showed that parahoric torsors are equivalent to $\Gamma$-equivariant $G$-bundles \cite{Moduli of parahoric torsors} in characteristic zero. Their approaches can be generalized to Higgs bundles and local systems in mixed characteristic (see \S\ref{sect corresp} or \cite{Parahoric Higgs}). We show the following isomorphisms of stacks (Theorem~\ref{equivalence for higgs bundles} and Proposition~\ref{parahoric connection equivariant})
\begin{align*}
{\rm Higgs}^{tame,{\boldsymbol\rho}}_{G}([Y/\Gamma]) \cong {\rm Higgs}^{tame}_{\mathcal{P}_{\boldsymbol\theta}}(X), \quad {\rm Locsys}^{tame, {\boldsymbol\rho}}_G([Y/\Gamma]) \cong {\rm Locsys}^{tame}_{\mathcal{P}_{\boldsymbol\theta}}(X),
\end{align*}
where $\boldsymbol\rho$ is a topological data corresponding to weights $\boldsymbol\theta$ and $[Y/\Gamma]$ is the quotient stack, of which the coarse moduli space is $X$. This equivalence shows that proving the tame parahoric nonabelian Hodge correposndence on $X$ is equivalent to considering the correspondence over $[Y/\Gamma]$ (or an $\Gamma$-equivariant version over $Y$), which is a direct result of Theorem \ref{first structure theorem for G}.

\begin{thm}[Theorem~\ref{parahoric non hod corr} and Proposition~\ref{structure of parahoric X}]
There exists a canonical isomorphism of stacks over $B_{\mathscr{L}'}$:
\begin{align*}
\mathcal{H}\times^{\mathcal{A}}\mathcal{X}_{\mathcal{P}_{\boldsymbol\theta}}\rightarrow {\rm Locsys}^{tame}_{\mathcal{P}_{\boldsymbol\theta}}.
\end{align*}
The stack $\mathcal{X}_{\mathcal{P}_{\boldsymbol\theta}}$ is the disjoint union of $\mathcal{X}_{\mathcal{P}_{\boldsymbol\theta}, {\boldsymbol\tau}}$ (over $\boldsymbol\tau$) such that
\begin{align*}
\mathcal{X}_{\mathcal{P}_{\boldsymbol\theta}, {\boldsymbol\tau}} \cong {\rm Higgs}^{tame}_{G'_{{\boldsymbol\theta}+{\boldsymbol\tau}}}(X').
\end{align*}
\end{thm}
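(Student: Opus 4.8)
The plan is to deduce the parahoric statement from the $G$-version already established in Theorem~\ref{first structure theorem for G} by equivariant descent along the tamely ramified covering $Y\to X$. The two equivalences of Theorem~\ref{equivalence for higgs bundles} and Proposition~\ref{parahoric connection equivariant},
\begin{align*}
{\rm Locsys}^{tame}_{\mathcal{P}_{\boldsymbol\theta}}(X)\cong {\rm Locsys}^{tame,{\boldsymbol\rho}}_G([Y/\Gamma]),\qquad {\rm Higgs}^{tame}_{\mathcal{P}_{\boldsymbol\theta}}(X')\cong {\rm Higgs}^{tame,{\boldsymbol\rho}}_G([Y'/\Gamma]),
\end{align*}
applied over $X$ and over its Frobenius twist $X'$, identify every parahoric object with a $\Gamma$-equivariant $G$-object of fixed topological type $\boldsymbol\rho$ on the cover. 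Because $|\Gamma|$ is prime to $p$, the group $\Gamma$ is linearly reductive, the stack $[Y/\Gamma]$ is tame, and the functor of $\Gamma$-invariants is exact; this exactness is the property that lets me transport the whole twisted correspondence through the quotient.

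First I would run the $G$-correspondence of Theorem~\ref{first structure theorem for G} over $Y$ in place of $X$, with $D$ replaced by its reduced preimage $D_Y$ and $\mathscr{L}$ by $\Omega_Y(D_Y)$; tameness of the cover guarantees that the ramification produces exactly first-order poles, so the hypotheses are met. The resulting data — the $p$-Hitchin morphism, the regular-centralizer group scheme $J^p$, the base, and hence the stacks $\mathcal{H}$, $\mathcal{A}$, $\mathcal{X}$ — all carry a natural $\Gamma$-action, and Theorem~\ref{first structure theorem for G} furnishes a $\Gamma$-equivariant isomorphism $\mathcal{H}\times^{\mathcal{A}}\mathcal{X}\xrightarrow{\sim}{\rm Locsys}^{tame}_G$ over the $Y$-base. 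Applying the exact functor of $\Gamma$-invariants, using that it commutes with the twisted fiber product $\times^{\mathcal{A}}$, and then feeding in the two equivalences above produces a canonical isomorphism $(\mathcal{H})^\Gamma\times^{(\mathcal{A})^\Gamma}(\mathcal{X})^\Gamma\xrightarrow{\sim}{\rm Locsys}^{tame}_{\mathcal{P}_{\boldsymbol\theta}}$. It then remains to identify the invariant pieces: since $\mathcal{H}$ and $\mathcal{A}$ are built from $J^p$ and the bundle $\mathscr{B}_{\mathscr{L}'}$, which are governed by the generic behaviour of the Higgs/connection data and are unaffected by the parahoric modification at the marked points, their $\Gamma$-invariants recover the undecorated $\mathcal{H}$, $\mathcal{A}$ and the base $B_{\mathscr{L}'}$ on $X'$; setting $\mathcal{X}_{\mathcal{P}_{\boldsymbol\theta}}:=(\mathcal{X})^\Gamma$ yields the stated isomorphism over $B_{\mathscr{L}'}$.

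For the structure statement I would invoke Proposition~\ref{structure of X}, which decomposes $\mathcal{X}$ on the cover into pieces $\mathcal{X}_{\boldsymbol\tau}\cong {\rm Higgs}^{tame}_{G'_{\boldsymbol\tau}}$ indexed by rational semisimple residuals, and take $\Gamma$-invariants of each piece. The essential local input is the weight bookkeeping encoded in the introductory diagram relating $\mathbb{D}_x$ and $[\mathbb{D}_y/\Gamma]$: under the cover the parahoric weight $\boldsymbol\theta$ and the residual $\boldsymbol\tau$ add, shifting $\theta$ to $\theta+\tau$, so that the invariant part of $\mathcal{X}_{\boldsymbol\tau}$ is the parahoric Higgs stack ${\rm Higgs}^{tame}_{G'_{\boldsymbol\theta+\boldsymbol\tau}}(X')$. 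This gives the decomposition $\mathcal{X}_{\mathcal{P}_{\boldsymbol\theta}}=\bigsqcup_{\boldsymbol\tau}\mathcal{X}_{\mathcal{P}_{\boldsymbol\theta},\boldsymbol\tau}$ with $\mathcal{X}_{\mathcal{P}_{\boldsymbol\theta},\boldsymbol\tau}\cong {\rm Higgs}^{tame}_{G'_{\boldsymbol\theta+\boldsymbol\tau}}(X')$.

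The step I expect to be the main obstacle is precisely the identification of the $\Gamma$-invariant pieces together with the coherence of the weight shift over the entire family. Exactness of invariants under the prime-to-$p$ group $\Gamma$ handles the homological bookkeeping and the commutation with $\times^{\mathcal{A}}$, but one must check that the $\Gamma$-equivariant structures on $J^p$, on $\mathcal{A}\subseteq{\rm Locsys}^{tame}_{J^p}$, and on the horizontal section $\Psi$ are genuinely those induced from the parahoric data on $X$, and that the local assignment $\theta\mapsto\theta+\tau$ glues coherently across all of $B_{\mathscr{L}'}$ rather than only pointwise at each $x\in D$. Verifying that the local correspondence of the introductory diagram spreads out over the Hitchin base is the technical heart of the argument.
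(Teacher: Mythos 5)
Your treatment of the decomposition of $\mathcal{X}_{\mathcal{P}_{\boldsymbol\theta}}$ is essentially the paper's: both arguments pass through the chain $\mathcal{X}_{\mathcal{P}_{\boldsymbol\theta},\boldsymbol\tau}(X)\cong\mathcal{X}_{G,d(\boldsymbol\theta+\boldsymbol\tau)}([Y/\Gamma])\cong{\rm Higgs}^{tame}_{G'_{d(\boldsymbol\theta+\boldsymbol\tau)}}([Y'/\Gamma])\cong{\rm Higgs}^{tame}_{G'_{\boldsymbol\theta+\boldsymbol\tau}}(X')$ using Proposition~\ref{parahoric connection equivariant}, Proposition~\ref{structure of X} and Theorem~\ref{equivalence for higgs bundles}. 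For the isomorphism $\mathcal{H}\times^{\mathcal{A}}\mathcal{X}_{\mathcal{P}_{\boldsymbol\theta}}\to{\rm Locsys}^{tame}_{\mathcal{P}_{\boldsymbol\theta}}$, however, you take a genuinely different route. The paper does not descend the whole of Theorem~\ref{first structure theorem for G} from $Y$; it proves the parahoric analogues of each ingredient directly on $X$ --- the parahoric Hitchin and $p$-Hitchin maps (Lemma~\ref{parahoric hitchin fibration}, Corollary~\ref{phitchin morphism for parahoric groups}), the extension of $J_b\to{\rm Aut}(E)$ across $D$ and its horizontality (Lemma~\ref{regular centralizer map for parahoric groups}), and the resulting action of ${\rm Locsys}^{tame}_{J^p}$ on ${\rm Locsys}^{tame}_{\mathcal{P}_{\boldsymbol\theta}}$ --- and then reruns the torsor argument of Theorem~\ref{first structure theorem for G} verbatim, with $\mathcal{H}$ and $\mathcal{A}$ literally the same stacks as in the $G$-case (the covering enters only lemma by lemma, in formal neighborhoods of $D$). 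What the paper's route buys is that one never has to make sense of ``$\Gamma$-invariants of $\mathcal{H}\times^{\mathcal{A}}\mathcal{X}$''; the twisted product is formed once, over $X$. Your route is conceptually economical, but the step you flag as the main obstacle is real and is subtler than exactness of invariants for a prime-to-$p$ group: the dictionary between $\mathcal{P}_{\boldsymbol\theta}$-objects on $X$ and $(\Gamma,G)$-objects of type $\boldsymbol\rho$ on $Y$ is not plain $\Gamma$-descent but involves conjugation by $w^{d\theta}$ (Lemma~\ref{explicit correspondence bewteen groups}) and rescales residues and weights by $d$; and $\mathcal{H}\times^{\mathcal{A}}\mathcal{X}$ is a stacky quotient (a colimit) whose compatibility with descent along $[Y/\Gamma]\to X$ (a limit) must be checked, say by exploiting that $\mathcal{H}$ is an $\mathcal{A}$-torsor so the comparison is local over $B_{\mathscr{L}'}$. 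Both points are resolvable, but resolving them amounts to reproving the paper's parahoric lemmas, so the descent strategy repackages rather than avoids the technical work.
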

\noindent This correspondence is called the \emph{tame parahoric nonabelian Hodge correspondence}.

\subsection{Structure of the Paper}
In \S\ref{sect pre}, we briefly review some necessary backgrounds about parahoric groups. In \S\ref{sect corresp}, we generalize the correspondence studied by Balaji--Seshadri \cite{parabolic torsors} to positive characteristics. More precisely, let $Y \rightarrow X$ be a covering with Galois group $\Gamma$, and we prove that there is a correspondence between parahoric Higgs bundles (resp. local systems) over $X$ and $\Gamma$-equivariant Higgs bundles (resp. local systems) over $Y$. In \S\ref{sect local}, we construct the tame parahoric nonabelian Hodge correspondence on a formal disc $\mathbb{D}$ (Theorem~\ref{classification for parahoric connection}). Based on the local study, we prove the \emph{tame parahoric nonabelian Hodge correspondence} (Theorem~\ref{first structure theorem for G} and \ref{parahoric non hod corr}).

\vspace{2mm}
\textbf{Acknowledgments}.
The authors would like to thank D. Arinkin, G. Kydonakis, and L. Zhao for helpful discussions and conversations. During the preparation of the paper, Hao Sun is partially supported by National Key R$\&$D Program of China (No. 2022YFA1006600).
\vspace{2mm}

\section{Preliminaries}\label{sect pre}
Let $k$ be an algebraically closed field of positive characteristic. Let $G$ be a connected reductive linear algebraic group over $k$. We fix a maximal torus $T \subseteq G$. Denote by $\mathfrak{g}$ and $\mathfrak{t}$ the Lie algebras of $G$ and $T$ respectively. Let $X^*(T):={\rm Hom}(T,\mathbb{G}_m)$ be the group of characters, and let $X_*(T):={\rm Hom}(\mathbb{G}_m,T)$ be the group of cocharacters. The adjoint action of $T$ on $\mathfrak{g}$ gives a decomposition of the Lie algebra
\begin{align*}
\mathfrak{g}=\bigoplus\limits_{\alpha \in X^*(T)} \mathfrak{g}_\alpha.
\end{align*}
A \emph{root} is a nonzero character $\alpha$, of which $\mathfrak{g}_\alpha \neq 0$, and $\mathfrak{g}_\alpha$ is called a \emph{root space}. Denote by $R$ the set of roots. We define a natural pairing
\begin{align*}
\langle \cdot, \cdot \rangle : X_*(T) \times X^*(T) \rightarrow \mathbb{Z}
\end{align*}
This pairing can be extended to $\mathbb{Q}$ naturally. Thus, $\langle \theta,\alpha \rangle$ is a well-defined rational number, where $\theta$ is a weight and $\alpha$ is a root. We also use the notation $\alpha(\theta)$ for this number.

\subsection*{\textbf{Tame Weights}}

\hfill{\space}

A rational cocharacter $\theta \in X_*(T) \otimes_{\mathbb{Z}} \mathbb{Q}$ is called a \emph{weight}. In characteristic zero, a weight $\theta$ can be regarded as an element in $\mathfrak{t}_\mathbb{Q}$ under differentiation. In positive characteristic, we need some extra conditions.
\begin{defn}\label{defn tame weight}
A weight $\theta$ is \emph{tame} if its denominator is coprime to $p$.
\end{defn}
Given a tame weight $\theta= \vartheta \otimes \frac{a}{b}$, we have $(b,p)=1$. Then $\theta$ corresponds to a well-defined element in $\mathfrak{t}_{\mathbb{F}_p}$. Abusing the notation, a tame weight can either be a rational cocharacter or an element in $\mathfrak{t}_{\mathbb{F}_p}$. Now given $a \in \mathfrak{g}$, let $a=s+n$ be its Jordan decomposition, where $s$ is the semisimple part and $n$ is the nilpotent part. Denote by $\mathfrak{t} \subseteq \mathfrak{g}$ the corresponding Lie algebras. Now we fix a splitting of Lie algebras $\pi: \mathfrak{t}_{\mathbb{F}_p} \hookrightarrow \mathfrak{t}$ and decompose $s=\tau+\sigma$ such that under an appropriate conjugation, we have $\tau \in \mathfrak{t}_{\mathbb{F}_p}$ and the projection of $\sigma$ onto $\mathfrak{t}_{\mathbb{F}_p}$ is zero. We say that $\tau$ is the \emph{rational part} of $s$ and $\sigma$ is the \emph{irrational part} of $s$. For convenience, we sometimes assume that $\tau \in \mathfrak{t}_{\mathbb{F}_p}$, and then we have $\alpha(\tau) \in \mathbb{F}_p$, while $\alpha(\sigma) \notin \mathbb{F}_p$ for every root $\alpha$.

\subsection*{\textbf{Grading of Lie algebras}}

\hfill{\space}

\indent Given a weight $\theta$, it also induces a (graded) decomposition of the Lie algebra $\mathfrak{g}$
\begin{align*}
\mathfrak{g}=\bigoplus\limits_{\lambda \in \mathbb{Q}} \mathfrak{g}_\lambda,
\end{align*}
where $\mathfrak{g}_\lambda=\bigoplus\limits_{\alpha(\theta)=\lambda} \mathfrak{g}_\alpha$. Note that this decomposition is not given by the eigenspace of the differentiation of the adjoint action ${\rm Ad}(\theta)$, of which the eigenvalues are in $\mathbb{F}_p$. For any integer $k$, we define
\begin{align*}
\mathfrak{g}_{\geq k}:= \bigoplus_{\lambda \geq -k} \mathfrak{g}_\lambda \subseteq \mathfrak{g}.
\end{align*}
As a special case, $\mathfrak{g}_{\geq 0} = \mathfrak{p}_\theta$ is the Lie algebra of the parabolic subgroup $P_\theta \subseteq G$ associated to $\theta$, and $\mathfrak{g}_{0} = \mathfrak{l}_\theta$ is the Lie algebra of the Levi component $L_\theta$ of $P_\theta$.

\subsection*{\textbf{Parahoric groups}}

\hfill{\space}

Given a root $\alpha \in R$, there is a natural isomorphism
\begin{align*}
{\rm Lie}(\mathbb{G}_a) \rightarrow \mathfrak{g}_\alpha.
\end{align*}
This isomorphism induces a natural homomorphism
\begin{align*}
u_\alpha: \mathbb{G}_a \rightarrow G,
\end{align*}
such that $t u_\alpha(a) t^{-1}= u_\alpha(\alpha(t)a)$ for $t \in T$ and $a \in \mathbb{G}_a$. Denote by $U_\alpha$ the image of $u_\alpha$, which is a closed subgroup. Then, we define $\mathcal{O}:=k[[z]]$ and $\mathcal{K}:=k((z))$. Denote by $LG:=G(\mathcal{K})$ the loop group and $L\mathfrak{g}:=\mathfrak{g}(\mathcal{K})$ the loop Lie algebra (see \cite{Twisted loop groups} for more details).
\begin{defn}\label{defn local parah gp}
Given a weight $\theta$, the \emph{parahoric (sub)group} $\mathcal{P}_\theta(\mathcal{O}) \subseteq LG$ is defined as
\begin{align*}
\mathcal{P}_{\theta}(\mathcal{O}):=\langle T(\mathcal{O}), \, U_\alpha (z^{m_\alpha(\theta)}\mathcal{O}), \, \alpha \in R \rangle,
\end{align*}
where $m_\alpha(\theta):=\lceil -\alpha(\theta) \rceil$ and $\lceil \ \cdot \ \rceil$ is the ceiling function. The \emph{Levi subgroup $\mathcal{L}_\theta(\mathcal{O})$} of $\mathcal{P}_\theta(\mathcal{O})$ is defined as
\begin{align*}
\mathcal{L}_\theta(\mathcal{O}):=\langle T(\mathcal{O}), \, U_\alpha(\mathcal{O}), \, \alpha \in R \text{ and }\alpha(\theta)=0 \rangle.
\end{align*}
\end{defn}
\noindent As a special case, when $\theta=0$, then $\mathcal{P}_{\theta}(\mathcal{O})=G(\mathcal{O})$. Fixing a weight $\theta$, we define a grading of $L\mathfrak{g}$
\begin{align*}
L \mathfrak{g}_{\geq k}:= \{\sum_{i \in \mathbb{Z}} a_i z^i \in L \mathfrak{g} \text{ }| \text{ } a_i \in \mathfrak{g}_{\geq i-k} \},
\end{align*}
where $k \in \mathbb{Q}$. If $k \geq 0$, denote by $\mathcal{G}_{\geq k}$ the associated group in $LG$. Here are some examples. When $k=0$, $\mathfrak{p}_\theta(\mathcal{O}):=L \mathfrak{g}_{\geq 0}$ is the Lie algebra of the parahoric group $\mathcal{P}_\theta(\mathcal{O})$. If $\theta=0$, the filtration $L\mathfrak{g}_{\geq k}$ is the natural filtration of $L\mathfrak{g}$ based on the degree.

\subsection*{\textbf{Parahoric group schemes}}

\hfill{\space}

The definition of parahoric groups is a local picture of parahoric group schemes. Let $X$ be a smooth algebraic curve over $k$, and we also fix a reduced effective divisor $D$ on $X$, which is a set of $s$ distinct points. For each point $x \in D$, we equip it with a weight $\theta_x \in Y(T) \otimes_{\mathbb{Z}} \mathbb{Q}$. Let ${\boldsymbol\theta}:=\{\theta_x, \, x \in D\}$ be the collection of weights over points in $D$.

\begin{defn}\label{defn global parah gp}
We define a group scheme $\mathcal{P}_{\boldsymbol\theta}$ over $X$ by gluing the following local data
\begin{align*}
\mathcal{P}_{\boldsymbol\theta}|_{X\backslash D} \cong G \times (X\backslash D), \, \mathcal{P}_{\boldsymbol\theta}|_{\mathbb{D}_x} \cong \mathcal{P}_{\theta_x}(\mathcal{O}), \, x \in D,
\end{align*}
where $\mathbb{D}_x$ is a formal disc around $x$. This group scheme $\mathcal{P}_{\boldsymbol\theta}$ is called a \emph{parahoric group scheme}.
\end{defn}
\noindent By \cite[Lemma 3.18]{ChGP}, the group scheme $\mathcal{P}_{\boldsymbol\theta}$ defined above is a smooth affine group scheme of finite type, flat over $X$. By definition, we know $\mathcal{P}_{\boldsymbol\theta}|_{X \backslash D}\simeq G\times (X \backslash D)$, and then there is a natural $D_{X}$-scheme structure on $G\times X$. Let $\mathscr{L}:=\Omega_X(D)$. These facts induce a natural connection $\nabla_{\mathcal{P}_{\boldsymbol\theta}}$ with first order pole on $\mathcal{O}_{\mathcal{P}_{\boldsymbol\theta}}$:
\begin{align*}
\mathcal{O}_{\mathcal{P}_{\boldsymbol\theta}} \xrightarrow{\nabla_{\mathcal{P}_{\boldsymbol\theta}} } \mathcal{O}_{\mathcal{P}_{\boldsymbol\theta}}\otimes_{\mathcal{O}_{X}}\mathscr{L}
\end{align*}
such that $\nabla_{\mathcal{P}_{\boldsymbol\theta}}$ satisfies the condition $\nabla_{\mathcal{P}_{\boldsymbol\theta}}(fg)=g\otimes df+f\nabla_{\mathcal{P}_{\boldsymbol\theta}}(g)$ for $f$ is a local section of $\mathcal{O}_{X}$ and $g$ is a local section of $\mathcal{O}_{\mathcal{P}_{\boldsymbol\theta}}$.

\subsection*{\textbf{ Tame Parahoric Local systems and Logahoric Higgs bundles}}

\hfill{\space}

Let $E$ be a $\mathcal{P}_{\boldsymbol\theta}$-torsor over $X$, which can be understood by gluing the local data: $G$-torsor $E|_{X \backslash D}$ and $\mathcal{P}_{\theta_x}$-torsor $E|_{\mathbb{D}_x}$. A \emph{logahoric $\mathcal{P}_{\boldsymbol\theta}$-connection} is a connection with first order pole on $E$. More precisely, it is a derivation
\begin{align*}
\nabla: \mathcal{O}_{E} \rightarrow \mathcal{O}_{E} \otimes_{\mathcal{O}_{X}}\mathscr{L}
\end{align*}
such that the following diagram commutes:
$$
\xymatrix{
\mathcal{O}_{E}\ar[r] \ar[d]^{\nabla_{E}} & \mathcal{O}_{\mathcal{P}_{\boldsymbol\theta}} \otimes_{\mathcal{O}_{X}} \mathcal{O}_{E} \ar[d]^{\nabla_{\mathcal{P}_{\boldsymbol\theta}} \otimes\id+\id\otimes\nabla_{\mathcal{P}_{\boldsymbol\theta}}}\\
\mathcal{O}_{E}\otimes_{\mathcal{O}_{X}}\mathscr{L} \ar[r] & O_{\mathcal{P}_{\boldsymbol\theta}}\otimes_{\mathcal{O}_{X}}\mathcal{O}_{E}\otimes_{\mathcal{O}_{X}}\mathscr{L} .
}
$$
\begin{defn}\label{defn Localsys}
A \emph{tame $\mathcal{P}_{\boldsymbol\theta}$-local system} over $X$ is a pair $(E,\nabla)$, where $E$ is a $\mathcal{P}_{\boldsymbol\theta}$-torsor and $\nabla$ is a logahoric $\mathcal{P}_{\boldsymbol\theta}$-connection on $E$. Denote by ${\rm Locsys}^{tame}_{\mathcal{P}_{\boldsymbol\theta}}(X)$ the stack of tame $\mathcal{P}_{\boldsymbol\theta}$-local systems over $X$. If there is no ambiguity, we omit $X$ in the notation ${\rm Locsys}^{tame}_{\mathcal{P}_{\boldsymbol\theta}}(X)$.
\end{defn}

Now let ${\rm Ad}(E)$ be the adjoint bundle of $E$. A section $\phi \in H^0(X, {\rm Ad}(E) \otimes_{\mathcal{O}_X} \mathscr{L})$ is called a \emph{logarithmic $\mathcal{P}_{\boldsymbol\theta}$-Higgs field}.
\begin{defn}\label{defn Higgs bundle}
A \emph{logarhoic $\mathcal{P}_{\boldsymbol\theta}$-Higgs bundle} is a pair $(E,\phi)$, where $E$ is a $\mathcal{P}_{\boldsymbol\theta}$-torsor and $\phi \in H^0(X, {\rm Ad}(E) \otimes \mathscr{L})$ is a logarithmic $\mathcal{P}_{\boldsymbol\theta}$-Higgs field. Denote by ${\rm Higgs}^{tame}_{\mathcal{P}_{\boldsymbol\theta}}(X)$ the stack of logahoric $\mathcal{P}_{\boldsymbol\theta}$-Higgs bundles over $X$. Sometimes, we add the subscript $\mathscr{L}$ and use the notation ${\rm Higgs}^{tame}_{\mathcal{P}_{\boldsymbol\theta},\mathscr{L}}(X)$ to emphasize that it is for $\mathscr{L}$-twisted $\mathcal{P}_{\boldsymbol\theta}$-Higgs bundles.
\end{defn}

\section{Correspondence: Parahoric vs. Equivariant}\label{sect corresp}
Balaji and Seshadri gives the correspondence between parahoric torsors and equivariant bundles \cite{Moduli of parahoric torsors}. A similar correspondence also works for Higgs bundles and local systems \cite{parahoric connections, Parahoric Higgs}. Although the correspondence is only given in characteristic zero, it can be naturally generalized to prime characteristic under some necessary conditions (tame weights). In this section, we first give the correspondence of parahoric torsors and equivariant bundles in positive characteristic, which is a direct generalization of Balaji and Seshadri's work, and then we give the correspondences for Higgs bundles and local systems.

We introduce some notations first. Let $X$ and $Y$ be two smooth algebraic curves over $k$. In fact, we assume that $Y \rightarrow X$ is a covering with Galois group $\Gamma$. Let $x$ (resp. $y$) be a point in $X$ (resp. $Y$). Denote by $\mathbb{D}_x:=\Spec(\mathcal{O}_z)$ (resp. $\mathbb{D}_y:=\Spec(\mathcal{O}_w)$) the formal disc around $x$ (resp. $y$) with local coordinate $z$ (resp. $w$).

\subsection{Torsors}\label{subsect torsor}
Let $\Gamma$ be a cyclic group of order $d$ with a generator $\gamma$. In this paper, we always assume that the order $d$ and the characteristic $p$ are coprime. Choose a $d$-th root of unity $\zeta$, and we have a natural $\Gamma$-action on $\mathbb{D}_y$ such that $\gamma w=\zeta w$.

\begin{defn}
A \emph{$\Gamma$-equivariant $G$-torsor} is a $G$-torsor $F$ over $\mathbb{D}_y$ together with a lift of the action of $\Gamma$ on $F$ preserving the $G$-action. For simplicity, a $\Gamma$-equivariant $G$-torsor is called a \emph{$(\Gamma,G)$-torsor}.
\end{defn}

Since we work on $\mathbb{D}_y$ now, we may assume that $F$ is the trivial $G$-torsor, and
\begin{align*}
F(\mathbb{D}_y) \cong G(\mathcal{O}_w).
\end{align*}
Therefore, given a $(\Gamma,G)$-torsor $F$, we get a morphism $\rho: \Gamma \rightarrow G$ well-defined up to conjugation. If a $(\Gamma,G)$-torsor $F$ corresponds to $\rho$, then we say that $F$ is of \emph{type $\rho$}. By \cite[Lemma 2.5]{parabolic torsors}, the $\Gamma$-equivariant structure is uniquely determined by the representation $\rho$ and one may assume that the $\Gamma$-action on $\mathbb{D}_{y}\times G$ is of the form \begin{align*}
\gamma\cdot (w, h)=(\gamma w, \rho(\gamma) h\rho^{-1}(\gamma)).
\end{align*}
Since $\Gamma$ is a cyclic group, the representation $\rho$ factors through $T$ under a suitable conjugation. Note that
\begin{align*}
{\rm Hom}(\Gamma, T) \cong {\rm Hom}(X_*(T),X_*(\Gamma_y))={\rm Hom}(X_*(T),\mathbb{Z}/d\mathbb{Z})=X_*(T)/d\cdot X_*(T).
\end{align*}
Then, a representation $\rho: \Gamma \rightarrow T$ corresponds to an element in $X_*(T) \otimes_{\mathbb{Z}}\mathbb{Q}$ with denominator $d$, which is a tame weight. Although this correspondence is not unique in general, a representation $\rho$ corresponds to a unique small tame weight $\theta$, which means that the rational number is smaller than one. On the other hand, given a tame weight $\theta$, it corresponds to a unique representation $\rho: \Gamma \rightarrow T$ given by $\rho(\gamma)=\zeta^{d\cdot\theta}$. Furthermore, we can find an element $\Delta(w):=w^{d \cdot\theta} \in T(\mathcal{K}_w)$, which satisfies
\begin{align*}
\Delta(\gamma w)=\rho(\gamma) \Delta(w).
\end{align*}

Let $F$ be a $(\Gamma,G)$-torsor of type $\rho$, and let $\theta$ be a tame weight corresponding to $\rho$. Denote by ${\rm Aut}_{(\Gamma,G)}(F)$ the automorphism group. An automorphism $\sigma \in {\rm Aut}_{(\Gamma,G)}(F)$ is equivalent to an element in $G(\mathbb{C}[[w]])^{\Gamma}$. Let $\Delta(w)$ be the element satisfying $\Delta(\gamma w)= \rho(w)\Delta(w)$. We define $\varsigma:=\Delta^{-1} \sigma \Delta$. Clearly, we have
\begin{align*}
\varsigma(\gamma w)=\varsigma(w),
\end{align*}
which means that $\varsigma$ is $\Gamma$-invariant. Therefore, it can be descended to an element $G(\mathcal{K}_z)$ by substituting $z=w^d$. Moreover, for each root $\alpha \in R$, we have
\begin{align*}
\varsigma(w)_\alpha=\sigma(w)_\alpha w^{-d \cdot \alpha(\theta)},
\end{align*}
where the subscript $\alpha$ means that the element is in $U_\alpha(\mathcal{O}_w)$. Substituting $z=w^d$, we have
\begin{align*}
\varsigma(z)_\alpha=\sigma(z)_\alpha z^{-\alpha(\theta)}.
\end{align*}
Since $\varsigma(z)_\alpha$ is $\Gamma$-invariant, we have $\varsigma(z)_\alpha \in U_\alpha(z^{m_r(\theta)}\mathcal{O}_z)$ for each $\alpha \in R$. In conclusion, the element $\varsigma(z)$ is in $\mathcal{P}_{\theta}(\mathcal{O}_z)$, and the above discussion implies the following lemma:

\begin{lem}[Theorem 2.3.1 in \cite{Moduli of parahoric torsors}]\label{explicit correspondence bewteen groups}
Let $F$ be a $(\Gamma,G)$-torsor of type $\rho$, and let $\theta$ be a weight corresponding to $\rho$. Then, we have
\begin{align*}
{\rm Aut}_{(\Gamma,G)}(F) \cong \mathcal{P}_{\theta}(\mathcal{O}_z).
\end{align*}
More precisely, the automorphism group $\Delta^{-1} {\rm Aut}_{(\Gamma,G)}(F) \Delta$ can be descended to $\mathbb{D}_x$, which is isomorphic to $\mathcal{P}_{\theta}(\mathcal{O}_z)$.
\end{lem}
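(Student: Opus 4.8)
The plan is to realize the isomorphism explicitly as conjugation by the meromorphic cocharacter $\Delta(w)=w^{d\theta}$, and to check that this conjugation interchanges the two sets of defining conditions. After trivializing $F$, I would describe an element of $\mathrm{Aut}_{(\Gamma,G)}(F)$ concretely as an element $\sigma\in G(\mathcal{O}_w)$ commuting with the lifted $\Gamma$-action, i.e. satisfying the twisted-invariance condition $\sigma(\gamma w)=\rho(\gamma)\,\sigma(w)\,\rho(\gamma)^{-1}$. Setting $\varsigma:=\Delta^{-1}\sigma\Delta$ as in the discussion above, conjugation by $\Delta$ is a group automorphism of $G(\mathcal{K}_w)$ with inverse $\varsigma\mapsto\Delta\varsigma\Delta^{-1}$, so $\sigma\mapsto\varsigma$ is automatically an injective group homomorphism. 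The entire content of the lemma is then that its image is exactly $\mathcal{P}_\theta(\mathcal{O}_z)$, viewed inside $G(\mathcal{K}_z)\subseteq G(\mathcal{K}_w)$ via $z=w^d$.

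For well-definedness I would first record that $\varsigma$ is genuinely $\Gamma$-invariant: combining $\Delta(\gamma w)=\rho(\gamma)\Delta(w)$ with the twisted-invariance of $\sigma$, the two factors $\rho(\gamma)$ cancel and one gets $\varsigma(\gamma w)=\varsigma(w)$, so $\varsigma$ descends to $G(\mathcal{K}_z)$. It then remains to pin down valuations on root components. Conjugation by the torus element $\Delta=w^{d\theta}$ scales the $\alpha$-component by $w^{-d\alpha(\theta)}$, giving $\varsigma(w)_\alpha=\sigma(w)_\alpha\,w^{-d\alpha(\theta)}$, where $d\alpha(\theta)=\langle d\theta,\alpha\rangle\in\mathbb{Z}$ precisely because $\theta$ is tame with denominator $d$. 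Since $\sigma(w)_\alpha\in\mathcal{O}_w$ and $\Gamma$-invariance forces only powers $w^{dk}=z^k$ to appear, the $z$-valuation of $\varsigma(z)_\alpha$ is a genuine integer that is $\geq -\alpha(\theta)$, hence $\geq \lceil -\alpha(\theta)\rceil=m_\alpha(\theta)$; the torus part is unchanged and lies in $T(\mathcal{O}_z)$. Thus $\varsigma\in\mathcal{P}_\theta(\mathcal{O}_z)$.

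Surjectivity is the same bookkeeping run backwards: given $\varsigma\in\mathcal{P}_\theta(\mathcal{O}_z)$, I set $\sigma:=\Delta\,\varsigma\,\Delta^{-1}$ after substituting $z=w^d$. The required equivariance $\sigma(\gamma w)=\rho(\gamma)\sigma(w)\rho(\gamma)^{-1}$ is automatic, since $(\gamma w)^d=\zeta^d w^d=w^d$ leaves $\varsigma$ unchanged while $\Delta$ supplies the two factors $\rho(\gamma)$. Regularity $\sigma\in G(\mathcal{O}_w)$ follows by reversing the valuation estimate: $\sigma(w)_\alpha=w^{d\alpha(\theta)}\varsigma(w^d)_\alpha$ has $w$-valuation at least $d\alpha(\theta)+d\,m_\alpha(\theta)=d\bigl(\alpha(\theta)+\lceil-\alpha(\theta)\rceil\bigr)\geq 0$, and the torus part remains in $T(\mathcal{O}_w)$. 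Hence $\sigma\in\mathrm{Aut}_{(\Gamma,G)}(F)$ maps to $\varsigma$, and the homomorphism is a bijection.

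The main obstacle — the only step beyond routine bookkeeping — is that neither $G(\mathcal{O}_w)$ nor $\mathcal{P}_\theta(\mathcal{O}_z)$ factors as a naive product of its torus and root subgroups, so the component-wise valuation argument must be legitimated. The cleanest remedy is to phrase both conditions lattice-theoretically: fix a faithful representation $G\hookrightarrow GL(V)$ and decompose $V=\bigoplus_\chi V_\chi$ into $T$-weight spaces; conjugation by $\Delta=w^{d\theta}$ scales $\mathrm{Hom}(V_\chi,V_{\chi'})$ by $w^{d(\chi'-\chi)(\theta)}$, so the condition that $\sigma$ preserve the standard lattice $V\otimes\mathcal{O}_w$ transforms exactly into the condition that $\varsigma$ preserve the $\theta$-twisted lattice, which is the lattice-chain description of $\mathcal{P}_\theta$. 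Equivalently, one verifies the correspondence on the generators $T(\mathcal{O})$ and $U_\alpha(z^{m_\alpha(\theta)}\mathcal{O})$ of $\mathcal{P}_\theta(\mathcal{O}_z)$ and invokes that they generate, which is precisely the input from Balaji--Seshadri (Theorem 2.3.1 in \cite{Moduli of parahoric torsors}). Finally I would note independence of the earlier choices: replacing $\rho$ by a conjugate or $\theta$ by another weight inducing it only changes $\Delta$ by a constant element of $G$ and an integral cocharacter, neither of which alters $\mathcal{P}_\theta(\mathcal{O}_z)$.
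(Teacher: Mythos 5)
Your proposal follows the same route as the paper: conjugate by $\Delta=w^{d\theta}$, use the twisted $\Gamma$-equivariance of $\sigma$ to see that $\varsigma=\Delta^{-1}\sigma\Delta$ is $\Gamma$-invariant and descends along $z=w^d$, and then read off the valuation bound $\geq\lceil-\alpha(\theta)\rceil=m_\alpha(\theta)$ on each root component from integrality of the $z$-valuation. Your added checks (surjectivity by reversing the estimate, and the caveat about $G(\mathcal{O}_w)$ not splitting as a product of root subgroups, handled via a faithful representation or the Balaji--Seshadri generation statement) only fill in details the paper leaves implicit, so the argument is correct and essentially identical.
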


This isomorphism gives us the following equivalence of categories.
\begin{lem}
The category of $(\Gamma,G)$-torsors of type $\rho$ over $\mathbb{D}_y$ is equivalent to the category of $\mathcal{P}_\theta(\mathcal{O}_z)$-torsors over $\mathbb{D}_x$.
\end{lem}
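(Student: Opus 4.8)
The plan is to treat both sides as groupoids---every morphism of ($\Gamma$-equivariant) $G$-torsors is automatically an isomorphism---and to exhibit a functor whose effect on the single distinguished object and on its automorphisms is exactly the group isomorphism recorded in Lemma~\ref{explicit correspondence bewteen groups}. Since a connected groupoid is determined up to equivalence by the automorphism group of any one of its objects, the statement will follow once I (i) produce an honest functor, together with a candidate quasi-inverse, (ii) check that each groupoid has a single isomorphism class, and (iii) match up the automorphism groups of the distinguished objects.

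First I would construct the functor $\Phi$ from $(\Gamma,G)$-torsors of type $\rho$ over $\mathbb{D}_y$ to $\mathcal{P}_\theta(\mathcal{O}_z)$-torsors over $\mathbb{D}_x$. Given such an $F$, trivialize it over $\mathbb{D}_y$ so that $F\cong \mathbb{D}_y\times G$ with $\Gamma$-action $\gamma\cdot(w,h)=(\gamma w,\rho(\gamma)h\rho^{-1}(\gamma))$, conjugate by $\Delta(w)=w^{d\theta}$, and pass to $\Gamma$-invariant sections; substituting $z=w^d$ then descends the result to $\mathbb{D}_x$. The computation carried out just before Lemma~\ref{explicit correspondence bewteen groups}---namely $\varsigma(z)_\alpha=\sigma(z)_\alpha z^{-\alpha(\theta)}$ together with the resulting membership $\varsigma(z)_\alpha\in U_\alpha(z^{m_\alpha(\theta)}\mathcal{O}_z)$ for each root $\alpha$---is precisely what guarantees that the descended sheaf of automorphisms lands in $\mathcal{P}_\theta(\mathcal{O}_z)$, so that $\Phi(F)$ is a genuine $\mathcal{P}_\theta(\mathcal{O}_z)$-torsor and $\Phi$ carries morphisms to morphisms. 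A candidate quasi-inverse $\Psi$ runs the construction backwards: pull a $\mathcal{P}_\theta(\mathcal{O}_z)$-torsor back along $\mathbb{D}_y\to\mathbb{D}_x$, conjugate by $\Delta(w)^{-1}$, extend the structure group from $\mathcal{P}_\theta$ to $G$, and equip the outcome with the tautological $\Gamma$-action of type $\rho$; the relation $\Delta(\gamma w)=\rho(\gamma)\Delta(w)$ ensures this is well-defined, and Lemma~\ref{explicit correspondence bewteen groups} shows the two composites are naturally isomorphic to the identity functors.

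Finally I would establish essential surjectivity and full faithfulness. Both groupoids are connected: over the formal disc $\mathbb{D}_y$ every $G$-torsor is trivial, and by \cite[Lemma 2.5]{parabolic torsors} the $\Gamma$-equivariant structure of a type-$\rho$ torsor is rigid, so the source has a single isomorphism class; likewise every $\mathcal{P}_\theta(\mathcal{O}_z)$-torsor over $\mathbb{D}_x$ is trivial, $\mathcal{P}_\theta$ being smooth affine over the complete local ring $\mathcal{O}_z$ with algebraically closed residue field, so the target has a single class as well. Since $\Phi$ sends the distinguished trivial object to the trivial object, it is essentially surjective; the induced map ${\rm Aut}_{(\Gamma,G)}(F)\to\mathcal{P}_\theta(\mathcal{O}_z)$ on the automorphisms of that object is the isomorphism of Lemma~\ref{explicit correspondence bewteen groups}, and full faithfulness then follows from connectedness of both groupoids.

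I expect the main obstacle to be the bookkeeping in step (i): verifying carefully that the operation ``conjugate by $\Delta$, take $\Gamma$-invariants, descend along $z=w^d$'' is functorial and really produces a $\mathcal{P}_\theta(\mathcal{O}_z)$-torsor over $\mathbb{D}_x$---as opposed to merely reproducing the automorphism-group identification---and, relatedly, justifying the triviality of $\mathcal{P}_\theta(\mathcal{O}_z)$-torsors over the formal disc that underlies connectedness of the target. Everything else is formal once these two points are in place.
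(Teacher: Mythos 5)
Your proposal is correct and follows essentially the same route as the paper: the paper derives this lemma directly from Lemma~\ref{explicit correspondence bewteen groups} (``This isomorphism gives us the following equivalence of categories''), i.e.\ both sides are connected groupoids over the formal disc and the equivalence is governed by the identification of automorphism groups via conjugation by $\Delta(w)=w^{d\theta}$ and descent along $z=w^d$. Your write-up merely makes explicit the functor, its quasi-inverse, and the triviality of torsors over the formal disc that the paper leaves implicit.
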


Now we consider the correspondence globally. Let $Y \rightarrow X$ be a covering of smooth algebraic curves with Galois group $\Gamma$. Suppose that the order of $\Gamma$ is not divided by $p$. Here is the definition of $\Gamma$-equivariant $G$-torsor over $Y$.
\begin{defn}
A \emph{$(\Gamma,G)$-torsor} over $Y$ is a $G$-torsor $F$ together with a lift $\Gamma$-action on $F$ preserving the action of $G$.
\end{defn}
\noindent Equivalently, a $(\Gamma,G)$-torsor over $Y$ is a $G$-torsor over the quotient stack $[Y/\Gamma]$. Denote by ${\rm Bun}_G([Y/\Gamma])$ the stack of $(\Gamma,G)$-torsors over $Y$.

Given $y \in Y$, let $\Gamma_y$ be the stabilizer group of the point $y$, and we suppose that $\Gamma_y$ is a cyclic group. Denote by $R$ the set of points in $Y$, of which the stabilizer groups are nontrivial. This set $R$ is regarded as the set of ramifications, and then denote by $D \subseteq X$ the branch divisor. Let $F$ be a $(\Gamma,G)$-torsor over $Y$. As was discussed above, the $\Gamma$-action around $y \in R$ is given by a representation $\rho_y: \Gamma_y \rightarrow T$. Denote by $\boldsymbol\rho:=\{\rho_y, y \in R\}$ the collection of representations. Let $\theta_y$ be a corresponding weight of $\rho_y$, and then denote by $\boldsymbol\theta$ the collection of weights, which gives a parahoric group scheme $\mathcal{P}_{\boldsymbol\theta}$ over $X$.

A $(\Gamma,G)$-torsor $F$ over $Y$ can be understood by gluing the following local data. We define $F_y:= \mathbb{D}_y  \times G$, such that the $\Gamma_y$-action is defined as
\begin{align*}
\gamma \cdot (u,g) \rightarrow (\gamma u, \rho_y(\gamma)g), \quad u \in \mathbb{D}_y, \gamma \in \Gamma_y,
\end{align*}
and define $F_0:=(Y \backslash R) \times G$ with the $\Gamma_y$-structure
\begin{align*}
\gamma \cdot (u,g) \rightarrow (\gamma u, g), \quad u \in Y\backslash R, \gamma \in \Gamma_y.
\end{align*}
Therefore, a $(\Gamma,G)$-torsor $F$ being of type $\boldsymbol\rho$, is equivalent to giving $(\Gamma,G)$-isomorphisms
\begin{align*}
\Theta_y: F_y|_{\mathbb{D}^{\times}_y} \rightarrow F_0|_{\mathbb{D}^{\times}_y}, \quad y \in R.
\end{align*}
With respect to the local picture we discussed, a $(\Gamma,G)$-bundle $F$ over $Y$ corresponds to a $\mathcal{G}_{\boldsymbol\theta}$-torsor over $X$.
\begin{thm}[Theorem 5.3.1 in \cite{Moduli of parahoric torsors}]\label{equi for torsors}
The category of $(\Gamma,G)$-torsors of type $\boldsymbol\rho$ over $Y$ is equivalent to the category of $\mathcal{P}_{\boldsymbol\theta}$-torsors over $X$. Furthermore, the equivalence of categories gives the equivalence of stacks, i.e.
\begin{align*}
{\rm Bun}^{\boldsymbol\rho}_G([Y/\Gamma]) \cong {\rm Bun}_{\mathcal{P}_{\boldsymbol\theta}}(X).
\end{align*}
\end{thm}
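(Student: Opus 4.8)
The plan is to globalize the local equivalence by a descent-and-gluing argument, handling the free locus and the formal neighborhoods of the branch points separately and then matching them on the punctured discs. First I would construct the functor from $(\Gamma,G)$-torsors of type $\boldsymbol\rho$ over $Y$ to $\mathcal{P}_{\boldsymbol\theta}$-torsors over $X$. Over the free locus the map $Y\backslash R \rightarrow X\backslash D$ is an étale Galois cover with group $\Gamma$ (étale precisely because $(d,p)=1$), so a $(\Gamma,G)$-torsor $F$ restricts there to a $\Gamma$-equivariant $G$-torsor, which by faithfully flat Galois descent is the same datum as a $G$-torsor $E_0:=(F|_{Y\backslash R})/\Gamma$ on $X\backslash D$. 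Over each formal disc $\mathbb{D}_x$, with $y\in R$ lying above $x\in D$, I would apply the preceding local equivalence to $F|_{\mathbb{D}_y}$ (which is of type $\rho_y$) to produce a $\mathcal{P}_{\theta_x}(\mathcal{O}_z)$-torsor $E_x$ on $\mathbb{D}_x$.

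The heart of the construction is the gluing. On each punctured disc $\mathbb{D}^{\times}_x$ both $E_0$ and $E_x$ restrict to $G$-torsors, since $\mathcal{P}_{\theta_x}|_{\mathbb{D}^{\times}_x}\cong G\times \mathbb{D}^{\times}_x$, and Lemma~\ref{explicit correspondence bewteen groups} supplies the canonical comparison between them: conjugation by $\Delta(w)=w^{d\cdot\theta_x}$ together with the substitution $z=w^d$ identifies the $\Gamma$-invariant transition data of $F$ with elements of $\mathcal{P}_{\theta_x}(\mathcal{O}_z)$, so the generic trivializations underlying $E_0$ and $E_x$ agree over $\mathbb{D}^{\times}_x$. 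Gluing $E_0$ to the $E_x$ along these punctured discs (a Beauville--Laszlo type gluing for torsors) then yields a global $\mathcal{P}_{\boldsymbol\theta}$-torsor $E$ on $X$. This defines the functor on objects, and the action on morphisms is immediate from naturality of the local equivalence.

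Next I would build the quasi-inverse. Given a $\mathcal{P}_{\boldsymbol\theta}$-torsor $E$, restrict it to $X\backslash D$, where $\mathcal{P}_{\boldsymbol\theta}\cong G\times(X\backslash D)$, to obtain a $G$-torsor; pull this back to $Y\backslash R$ and endow it with its tautological $\Gamma$-equivariant structure; and near each branch point apply the quasi-inverse of the local equivalence to produce a $(\Gamma,G)$-torsor of type $\rho_y$ on $\mathbb{D}_y$, gluing as before. That the two constructions are mutually quasi-inverse reduces, through the gluing, to the local equivalences, which are already established, while fullness and faithfulness follow from Lemma~\ref{explicit correspondence bewteen groups}, since automorphisms of $F$ correspond exactly to sections of $\mathcal{P}_{\boldsymbol\theta}$. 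To upgrade the equivalence of categories to an equivalence of stacks, I would carry out the entire construction relative to an arbitrary test scheme $S$: a family of $(\Gamma,G)$-torsors over $Y\times S$ produces a family of $\mathcal{P}_{\boldsymbol\theta}$-torsors over $X\times S$, descent and gluing commute with base change in $S$, and so the assignments assemble into mutually inverse morphisms of stacks ${\rm Bun}^{\boldsymbol\rho}_G([Y/\Gamma])\cong {\rm Bun}_{\mathcal{P}_{\boldsymbol\theta}}(X)$.

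The hard part will be the gluing step: I must check that the local comparison isomorphisms on the punctured discs are genuinely canonical and cocycle-compatible, i.e. independent of the auxiliary choices of $\Delta$, of local coordinates $z,w$, and of trivializations, so that the glued torsor is well defined and the construction is functorial in $S$. This is also where the positive-characteristic hypotheses matter: tameness $(d,p)=1$ is exactly what guarantees that the cover $Y\backslash R\rightarrow X\backslash D$ is étale (so Galois descent applies) and that the root-space bookkeeping underlying Lemma~\ref{explicit correspondence bewteen groups} remains valid, since $w^{d\cdot\theta}$ and the $d$-th root of unity $\zeta$ behave as in characteristic zero. Once this compatibility is pinned down, the equivalence of categories and its family version follow essentially formally from the local results.
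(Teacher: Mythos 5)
Your proposal is correct and follows essentially the same route as the paper (and as Balaji--Seshadri's original argument): describe a $(\Gamma,G)$-torsor of type $\boldsymbol\rho$ by its restriction to the free locus plus local models $F_y$ on the formal discs glued along $(\Gamma,G)$-isomorphisms $\Theta_y$ over the punctured discs, and then use the local correspondence of Lemma~\ref{explicit correspondence bewteen groups} (conjugation by $\Delta(w)=w^{d\theta}$ and the substitution $z=w^d$) to convert this gluing data into transition data for a $\mathcal{P}_{\boldsymbol\theta}$-torsor on $X$. The paper itself only sketches this reduction and defers the details to Theorem 5.3.1 of \cite{Moduli of parahoric torsors}, so your more explicit treatment of the descent, gluing, and test-scheme functoriality is a faithful expansion of the intended proof.
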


\subsection{Logarithmic Higgs Bundles}\label{subsect Higgs}
Let $F$ be a $(\Gamma,G)$-bundle over $\mathbb{D}_y$, and denote by $E$ the corresponding $\mathcal{P}_\theta(\mathcal{O}_z)$-torsor over $\mathbb{D}_x$. Denote by ${\rm Ad}(F)$ the adjoint bundle. Without loss of generality, suppose that ${\rm Ad}(F)=\mathfrak{g}(\mathcal{O}_w)$. Let $\phi$ an element in $\mathfrak{g}(\mathcal{O}_w) \frac{dw}{w}$, which is regarded as a section $\mathbb{D}_y \rightarrow {\rm Ad}(F) \otimes \Omega_{\mathbb{D}_y}(y)$ and is called a \emph{logarithmic Higgs field}. Assume that $\phi$ is $\Gamma$-equivariant, i.e.
\begin{align*}
\phi(\gamma w)=\rho(\gamma) \phi(w) \rho^{-1}(\gamma).
\end{align*}
With the same notations as in \S\ref{subsect torsor}, we define
\begin{align*}
\varphi=\Delta^{-1} \phi \Delta.
\end{align*}
Clearly, $\varphi$ is $\Gamma$-invariant, i.e.
\begin{align*}
\varphi(\gamma w)=\varphi(w).
\end{align*}
Therefore, $\varphi(w)$ can be descended to a section $\mathbb{D}_x \rightarrow {\rm Ad}(E) \otimes \Omega_{\mathbb{D}_x}(x)$ by substituting $z=w^d$, which implies that $\varphi(z) \in \mathfrak{p}_\theta(\mathcal{O}_z)\frac{dz}{z}$. Abusing the notation, we still use $\varphi: \mathbb{D}_x \rightarrow {\rm Ad}(E)\otimes \Omega_{\mathbb{D}_x}(x)$ for the corresponding section. It is easy to check that a $\Gamma$-equivariant logarithmic Higgs field $\phi$ of $F$ corresponds to a unique logarithmic $\mathcal{P}_\theta(\mathcal{O}_z)$-Higgs field $\varphi$ of $E$. With respect to the above discussion, we have a one-to-one correspondence between logarithmic $(\Gamma,G)$-Higgs bundles over $\mathbb{D}_y$ and logahoric $\mathcal{P}_{\theta}$-Higgs bundles over $\mathbb{D}_x$. This local discussion can be generalized globally. With the same setup as in \S\ref{subsect torsor}, we introduce the following definition.
\begin{defn}
A \emph{logarithmic $(\Gamma,G)$-Higgs bundle} over $Y$ is a pair $(F,\phi)$, where $F$ is a $(\Gamma,G)$-torsor over $Y$ and $\phi \in H^0(Y,{\rm Ad}(F)\otimes \Omega_Y(R))$ is a $\Gamma$-equivariant section.
\end{defn}

\begin{thm}[Theorem 3.6 in \cite{Parahoric Higgs}]\label{equivalence for higgs bundles}
The stack of logarithmic $(\Gamma,G)$-Higgs bundles of type $\boldsymbol\rho$ over $Y$ is isomorphic to the stack of logahoric $\mathcal{P}_{\boldsymbol\theta}$-Higgs bundles over $X$, i.e.
\begin{align*}
{\rm Higgs}^{tame,{\boldsymbol\rho}}_{G}([Y/\Gamma]) \cong {\rm Higgs}^{tame}_{\mathcal{P}_{\boldsymbol\theta}}(X).
\end{align*}
\end{thm}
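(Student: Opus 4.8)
The plan is to promote the torsor-level equivalence of Theorem~\ref{equi for torsors} to one that records the Higgs field. Given a tame $(\Gamma,G)$-Higgs bundle $(F,\phi)$ of type $\boldsymbol\rho$ over $Y$, Theorem~\ref{equi for torsors} already assigns to $F$ a $\mathcal{P}_{\boldsymbol\theta}$-torsor $E$ over $X$, so the only content to supply is a natural bijection between $\Gamma$-equivariant Higgs fields on $F$ and tame $\mathcal{P}_{\boldsymbol\theta}$-Higgs fields on $E$, compatible with this assignment, together with the verification that the whole correspondence glues into an isomorphism of stacks.

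First I would treat the generic locus. Writing $q\colon Y\to X$ for the covering, over $X\setminus D$ the map $q$ is an étale $\Gamma$-cover and $\mathcal{P}_{\boldsymbol\theta}$ restricts to $G\times(X\setminus D)$. Because $(d,p)=1$, the functor of $\Gamma$-invariants is exact, so a $\Gamma$-equivariant section of ${\rm Ad}(F)\otimes\Omega_Y$ over $q^{-1}(X\setminus D)$ descends uniquely to a section of ${\rm Ad}(E)\otimes\Omega_X$, and conversely. This reduces the problem to a purely local statement at the points of $D$.

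At a ramification point the argument is exactly the local computation recorded just before the statement: conjugating by $\Delta(w)=w^{d\theta}$ turns the $\Gamma$-equivariant field $\phi$ into $\varphi=\Delta^{-1}\phi\Delta$, which is $\Gamma$-invariant, and the substitution $z=w^d$ descends $\varphi$ to a section valued in $\mathfrak{p}_\theta(\mathcal{O}_z)\frac{dz}{z}$. The crucial bookkeeping is the same pole estimate $\varsigma(z)_\alpha=\sigma(z)_\alpha z^{-\alpha(\theta)}$ used in Lemma~\ref{explicit correspondence bewteen groups}: it forces each root component of $\varphi$ to satisfy the valuation bound defining $\mathfrak{p}_\theta(\mathcal{O}_z)$, namely $m_\alpha(\theta)=\lceil -\alpha(\theta)\rceil$, so that $\varphi$ lands in ${\rm Ad}(E)\otimes\mathscr{L}$ with $\mathscr{L}=\Omega_X(D)$ and in nothing larger. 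This is globally consistent with the twists because, for a tame cover, one has $\Omega_Y(R)\cong q^*\mathscr{L}$ as $\Gamma$-equivariant bundles (the ramification index cancels the different), so $\Gamma$-invariant $\Omega_Y(R)$-valued sections correspond precisely to $\mathscr{L}$-valued sections after descent.

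Finally I would check naturality and the upgrade to stacks. Since Lemma~\ref{explicit correspondence bewteen groups} identifies ${\rm Aut}_{(\Gamma,G)}(F)$ with $\mathcal{P}_\theta(\mathcal{O}_z)$ by conjugation with the same $\Delta$, the assignment $\phi\mapsto\varphi$ intertwines the two automorphism groups; hence it respects isomorphisms of Higgs bundles and glues with Theorem~\ref{equi for torsors} into a morphism of groupoids, whose inverse is given by multiplying by $\Delta$ and pulling back along $z=w^d$. The main obstacle is the passage to families: to obtain an isomorphism of stacks rather than a fiberwise equivalence, I must verify that descent of $\Gamma$-invariant sections commutes with arbitrary base change. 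This is precisely where tameness is indispensable---$(d,p)=1$ makes $[Y/\Gamma]$ a tame stack, so the $\Gamma$-invariants functor is exact and compatible with base change, and without it both this exactness and the pole-order matching at the ramification points would break down.
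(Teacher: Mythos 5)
Your proposal is correct and follows essentially the same route as the paper: the paper likewise reduces to the local statement at ramification points, conjugates the equivariant Higgs field by $\Delta(w)=w^{d\theta}$ to obtain a $\Gamma$-invariant section, descends via $z=w^d$ into $\mathfrak{p}_\theta(\mathcal{O}_z)\frac{dz}{z}$ using the same pole-order bookkeeping as in Lemma~\ref{explicit correspondence bewteen groups}, and glues with the torsor correspondence of Theorem~\ref{equi for torsors}. Your added remarks on the identification $\Omega_Y(R)\cong q^*\mathscr{L}$ and on base-change compatibility via tameness only make explicit details the paper delegates to the cited reference.
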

We refer the reader to \cite{Sun1901,Sun2003,Global Springer} for more details about the algebraic stacks ${\rm Higgs}^{tame}_{\mathcal{P}_{\boldsymbol\theta}}(X)$ and ${\rm Higgs}^{tame,{\boldsymbol\rho}}_{G}([Y/\Gamma])$.

\subsection{Tame Local Systems}\label{subsect_tame_local_sys}
We follow the same notations as above.
\begin{defn}
A \emph{tame $(\Gamma,G)$-local system of type $\rho$} over $Y$ is a $(\Gamma, G)$-torsor $F$ together with a logarithmic $\Gamma$-equivariant $G$-connection $\nabla$. A logarithmic $\Gamma$-equivariant $G$-connection is also called a \emph{logarithmic $(\Gamma,G)$-connection}.
\end{defn}

Let $y$ be a ramification point on $Y$, $x$ be its image on $X$. Over a neighborhood of $y$, the connection $\nabla$ can be written as $d - A\frac{dw}{w}$, where $A \in \mathfrak{g}(\mathcal{O}_w)$. Then the condition that $\nabla$ is $\Gamma$-invariant (under the gauge action) means that
\begin{align*}
d\rho(\gamma) \rho(\gamma)^{-1}+{\rm Ad}(\rho(\gamma)) A(w) \frac{dw}{w}=A(\gamma w)\frac{dw}{w},
\end{align*}
where $\rho : \Gamma \rightarrow T$ is the representation we studied in \S\ref{subsect torsor}. Since $\rho(\gamma)$ is a constant matrix, therefore we have
\begin{align*}
{\rm Ad}(\rho(\gamma)) A(w) \frac{dw}{w}=A(\gamma w)\frac{dw}{w},
\end{align*}
which is equivalent to say that $A$ lies in $\mathfrak{g}(\mathcal{O}_{w})^{\Gamma}=\mathfrak{p}_{\theta}(\mathcal{O}_{z})$. With the same idea as Theorem \ref{equi for torsors} and \ref{equivalence for higgs bundles}, we get the following:
\begin{prop}\label{parahoric connection equivariant}
The stack of tame $(\Gamma,G)$-local systems of type $\boldsymbol\rho$ over $Y$ is equivalent to the stack of tame $\mathcal{P}_{\boldsymbol\theta}$-local systems over $X$, i.e.
\begin{align*}
{\rm Locsys}^{tame, {\boldsymbol\rho}}_G([Y/\Gamma]) \cong {\rm Locsys}^{tame}_{\mathcal{P}_{\boldsymbol\theta}}(X).
\end{align*}
\end{prop}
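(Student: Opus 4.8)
The plan is to reduce the statement to the already-established torsor correspondence (Theorem~\ref{equi for torsors}) together with the local computation carried out just above, and then to promote the resulting object-level bijection to an equivalence of stacks. The first observation is that a tame local system is a torsor equipped with the extra datum of a first-order-pole connection; hence by Theorem~\ref{equi for torsors}, which identifies ${\rm Bun}^{\boldsymbol\rho}_G([Y/\Gamma])$ with ${\rm Bun}_{\mathcal{P}_{\boldsymbol\theta}}(X)$, it suffices to show that under the correspondence $F \leftrightarrow E$ of underlying torsors, $\Gamma$-equivariant connections on $F$ are matched with tame $\mathcal{P}_{\boldsymbol\theta}$-connections on $E$. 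Since connections glue and the correspondence is essentially the identity (descent along a free quotient) away from the ramification locus, where $Y\setminus R \to X\setminus D$ is \'etale, the problem is local and reduces to the formal discs $\mathbb{D}_y \to \mathbb{D}_x$.

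On the formal disc I would argue exactly as in Lemma~\ref{explicit correspondence bewteen groups}. Writing a $\Gamma$-equivariant connection as $d + A\frac{dw}{w}$ with $A \in \mathfrak{g}(\mathcal{O}_w)$, the equivariance condition computed above forces $A \in \mathfrak{g}(\mathcal{O}_w)^{\Gamma}$. I then transport the connection to $\mathbb{D}_x$ by the same change of trivialization $\Delta = w^{d\theta}$ used for torsors, forming $\Delta^{-1}(d + A\frac{dw}{w})\Delta$ and descending along $z = w^d$. The root-space components behave precisely as in the torsor case: conjugation multiplies the $\mathfrak{g}_\alpha$-part by $w^{-d\alpha(\theta)}$, the invariance congruence makes the result a function of $z$, and its $z$-valuation is at least $m_\alpha(\theta) = \lceil -\alpha(\theta)\rceil$, so the descended object lies in $\mathfrak{p}_\theta(\mathcal{O}_z)\frac{dz}{z}$. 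The construction is manifestly invertible by conjugating back with $\Delta$, giving a bijection between $\Gamma$-equivariant connections on $F$ and tame $\mathcal{P}_\theta(\mathcal{O}_z)$-connections on $E$.

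The one genuinely new point compared with the torsor and Higgs-field cases --- and the step I expect to require the most care --- is that a connection, unlike an automorphism or a Higgs field, does not transform purely by the adjoint action under the change of trivialization $\Delta$: the conjugation produces an extra logarithmic term $\Delta^{-1}d\Delta = (d\theta)\frac{dw}{w}$. I would check that this term is harmless for the parahoric condition, since $d\theta$ contributes only to the constant (degree-zero) part of the residue in $\mathfrak{t}$, and $\mathfrak{t}(\mathcal{O}_z) \subseteq \mathfrak{p}_\theta(\mathcal{O}_z)$; thus the shift merely adjusts the residue inside $\mathcal{P}_\theta(\mathcal{O}_z)$ and the descended connection is still a bona fide tame $\mathcal{P}_\theta$-connection. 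In dimension one there is no integrability condition to verify, so $\Gamma$-equivariance is the only constraint, which keeps this verification short.

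Finally, to upgrade the object-level bijection to an equivalence of stacks I would check functoriality in the test scheme and compatibility with morphisms, exactly as in Theorem~\ref{equi for torsors} and Theorem~\ref{equivalence for higgs bundles}. Concretely, a morphism of tame local systems is a morphism of the underlying torsors intertwining the connections; such morphisms correspond under Theorem~\ref{equi for torsors}, and the naturality of the local $\Delta$-conjugation shows that connection-compatibility is preserved in both directions. Gluing the local descents at the finitely many ramification points, using that the chosen $\Delta_y$ are compatible with the torsor gluing of Theorem~\ref{equi for torsors}, then yields the claimed equivalence ${\rm Locsys}^{tame,\boldsymbol\rho}_G([Y/\Gamma]) \cong {\rm Locsys}^{tame}_{\mathcal{P}_{\boldsymbol\theta}}(X)$.
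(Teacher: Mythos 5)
Your proposal is correct and follows essentially the same route as the paper: reduce to the torsor correspondence, work locally on formal discs, observe that $\Gamma$-equivariance forces the connection matrix into $\mathfrak{g}(\mathcal{O}_w)^{\Gamma}=\mathfrak{p}_{\theta}(\mathcal{O}_z)$, and transport by the gauge action of $\Delta=w^{d\theta}$, which produces exactly the extra residue term $d\theta\,\frac{dw}{w}$ that you correctly identify as landing in $\mathfrak{t}\subseteq\mathfrak{p}_{\theta}(\mathcal{O}_z)$. If anything, your write-up is more explicit than the paper's proof about the root-space valuations and the globalization to stacks.
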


\begin{proof}
We only give the correspondence locally. As we discussed in \S\ref{subsect torsor}, a $(\Gamma,G)$-torsor $F$ of type $\rho$ over $\mathbb{D}_y$ corresponds to a unique $\mathcal{P}_\theta(\mathcal{O}_z)$-torsor over $\mathbb{D}_x$. A tame $(\Gamma,G)$-local system on $\mathbb{D}_y$ can be regarded as a logarithmic connection $A\frac{dw}{w}$, where $A \in \mathfrak{g}(\mathcal{O}_{w})$. Under the gauge action by $\Delta^{-1}$, the logarithmic $(\Gamma,G)$-connection $A\frac{dw}{w}$ corresponds to following
\begin{align*}
\Delta^{-1} \ast A\frac{dw}{w}=d \cdot (- \theta +{\rm Ad}(\Delta^{-1})A) \frac{dw}{w}.
\end{align*}
Note that ${\rm Ad}(\Delta^{-1})A$ is $\Gamma$-invariant, which can be descended to an element in $\mathfrak{p}_{\theta}(\mathcal{O}_{z})$ by substituting $z=w^d$. As we explained above, the element $- \theta +{\rm Ad}(\Delta^{-1})A$ is in $\mathfrak{p}_{\theta}(\mathcal{O}_{z})$. Therefore, we get a logarithmic $\mathcal{P}_{\theta}(\mathcal{O}_{z})$-connection on $\mathbb{D}_{x}$. The other direction can be proved similarly.
\end{proof}


\section{Local Tame Parahoric Nonabelian Hodge Correspondence}\label{sect local}
In this section, we study tame $G$-local systems and logarithmic $G$-Higgs bundles over a formal disc $\mathbb{D}$. We prove the \emph{tame nonabelian Hodge correspondence} over $\mathbb{D}$ and generalize this correspondence to a parahoric version.

Fixing a local coordinate $z$ of $\mathbb{D}={\rm Spec}(\mathcal{O})$, a tame $G$-local systems over $\mathbb{D}$ is a logarithmic connection $d - A\frac{dz}{z}$ (indeed over the trivial $G$-torsor) , where $A \in \mathfrak{g}(\mathcal{O})$. We say that $d - A\frac{dz}{z}$ is equivalent to $d - B\frac{dz}{z}$ (under the gauge action) if there exists $g \in G(\mathcal{O})$ such that
\begin{align*}
dg g^{-1}+{\rm Ad}(g)A\frac{dz}{z}=B\frac{dz}{z}.
\end{align*}
This action is known as the \emph{gauge action}, and denote it by $g \ast A\frac{dz}{z}=B\frac{dz}{z}$. Therefore, the category of logarithmic $G$-connections over $\mathbb{D}$ is regarded as the equivalence classes of logarithmic $G$-connections $d-A\frac{dz}{z}$ under the gauge action.

Now we come to the side of logarithmic $G$-Higgs bundles. A logarithmic $G$-Higgs field over $\mathbb{D}$ is $A\frac{dz}{z}$, where $A \in \mathfrak{g}(\mathcal{O})$. The action of $G(\mathcal{O})$ on Higgs fields is defined as the \emph{adjoint action}, and $A\frac{dz}{z}$ is equivalent to $B\frac{dz}{z}$ if there exists $g \in G(\mathcal{O})$ such that $A={\rm Ad}(g)B$. Thus, the category of logarithmic $G$-Higgs bundles over $\mathbb{D}$ is considered as the equivalence classes of Higgs fields $A\frac{dz}{z}$ under the adjoint action.

\subsection{Standard Form of Tame Connections}
In this subsection we would like to prove an analogue of the results in \cite{Riemann Hilbert} in positive characteristic. For the case of characteristic zero, we also refer the reader to \cite{Herro} for more details. First let us consider logarithmic connections for the reductive group $G$.
\begin{lem}\label{exp}
Let $\theta$ be a weight. Let $X_k \in L\mathfrak{g}_{\geq k}$ where $k>0$. Then there exists an element $g_k \in \mathcal{G}_{\geq k}$ such that
\begin{align*}
{\rm Ad}(g_{k})={\rm id}+[X_{k},-] \mod L\mathfrak{g}_{>k}
\end{align*}
as an operator on $L\mathfrak{g}_{\geq 0}$.
\end{lem}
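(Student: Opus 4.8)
The plan is to construct $g_k$ by hand as a finite product of one-parameter-subgroup elements (root group elements and torus units), rather than via an exponential map, which is unavailable in characteristic $p$. The entire argument rests on the observation that $k>0$ forces all second- and higher-order corrections to vanish modulo $L\mathfrak{g}_{>k}$.

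First I would set up the total grading on $L\mathfrak{g}$: assign to $\mathfrak{g}_\lambda z^i$ the degree $\lambda+i$, so that by definition $L\mathfrak{g}_{\geq k}$ is precisely the span of homogeneous pieces of degree $\geq k$, and the bracket is additive for this grading, i.e. $[L\mathfrak{g}_{\geq a},L\mathfrak{g}_{\geq b}]\subseteq L\mathfrak{g}_{\geq a+b}$. The immediate consequence is that ${\rm ad}(X_k)$ raises degree by at least $k$, so ${\rm ad}(X_k)^2$ maps $L\mathfrak{g}_{\geq 0}$ into $L\mathfrak{g}_{\geq 2k}\subseteq L\mathfrak{g}_{>k}$ since $k>0$. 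Hence only the first-order term of any adjoint action survives modulo $L\mathfrak{g}_{>k}$, and moreover I may replace $X_k$ by its degree-exactly-$k$ homogeneous component, because the pieces of degree $>k$ already act into $L\mathfrak{g}_{>k}$. This component lives in the finite-dimensional space $L\mathfrak{g}[k]$, which keeps everything a finite product and sidesteps completion/convergence issues.

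Next I would decompose the degree-$k$ component as $\sum_j Y_j$ into one-dimensional homogeneous pieces, each of the form $Y_j=e\,z^i$ with $e\in\mathfrak{g}_\alpha$, or $Y_j=h\,z^i$ with $h\in\mathfrak{t}$. For a root piece I take $g_j:=u_\alpha(\widetilde{e}\,z^i)\in\mathcal{G}_{\geq k}$, where $\widetilde e$ corresponds to $e$ under $Lie(\mathbb{G}_a)\cong\mathfrak{g}_\alpha$; since $u_\alpha$ is the one-parameter subgroup with Lie algebra $e_\alpha$, the polynomial operator ${\rm Ad}(g_j)$ equals ${\rm id}+{\rm ad}(Y_j)$ up to terms built from ${\rm ad}(e_\alpha)^{\geq 2}$, each raising degree by at least $2\deg(Y_j)\geq 2k>k$. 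For a torus piece I fix $T\cong\mathbb{G}_m^r$, $\mathfrak{t}\cong k^r$, write $h=(h_1,\dots,h_r)$, and set $g_j:=(1+h_1z^i,\dots,1+h_rz^i)\in T(\mathcal{O})$; for a root $\alpha=\sum_s n_s\epsilon_s$ one computes $\alpha(g_j)=\prod_s(1+h_sz^i)^{n_s}\equiv 1+\alpha(h)z^i\pmod{z^{2i}}$, and since ${\rm Ad}(g_j)$ scales $\mathfrak{g}_\alpha$ by $\alpha(g_j)$, the $z^{2i}$-correction raises degree by at least $2i\geq 2k>k$, giving again ${\rm Ad}(g_j)\equiv{\rm id}+{\rm ad}(Y_j)\pmod{L\mathfrak{g}_{>k}}$.

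Finally I set $g_k:=\prod_j g_j\in\mathcal{G}_{\geq k}$ and expand ${\rm Ad}(g_k)=\prod_j{\rm Ad}(g_j)$. The deviation from ${\rm id}+\sum_j{\rm ad}(Y_j)$ is a sum of composites of two or more of the operators ${\rm ad}(Y_j)$, each raising degree by at least $2k$, hence congruent to $0$ modulo $L\mathfrak{g}_{>k}$; therefore ${\rm Ad}(g_k)\equiv{\rm id}+\sum_j{\rm ad}(Y_j)={\rm id}+[X_k,-]\pmod{L\mathfrak{g}_{>k}}$ on $L\mathfrak{g}_{\geq 0}$, as claimed. The step I expect to be the genuine obstacle is the torus contribution: in characteristic $p$ one cannot simply exponentiate $h\,z^i\in\mathfrak{t}(\mathcal{O})$, so the care goes into realizing the required first-order action through the explicit units $1+h_sz^i$ and verifying, via the character computation above, that all higher-order character corrections land in $L\mathfrak{g}_{>k}$. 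Everything else is formal once the degree bookkeeping from $k>0$ is in place.
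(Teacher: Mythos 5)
Your proof is correct and follows essentially the same route as the paper's: decompose $X_k$ into homogeneous root-space and torus components, realize each by an element of $U_\alpha(z^{\bullet}\mathcal{O})$ or $T(\mathcal{O})$ respectively, and use $k>0$ to push all higher-order corrections into $L\mathfrak{g}_{>k}$. You are more explicit than the paper about the torus contribution (the units $1+h_sz^i$ and the character expansion) and about the degree bookkeeping, but the argument is the same.
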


\begin{proof}
Let $X_{k}=\sum_{\alpha} X^{\alpha}_{k}$, where $X^{\alpha}_{k}\in\mathfrak{g}_{\alpha}(\mathcal{K})$. Since $k>0$, it is easy to verify that if we have constructed $g^{\alpha}_{k}$ for each $X^{\alpha}_{k}$, then the product $g_k=\prod g^{\alpha}_{k}$ satisfies the requirement. So we can assume $X_{k}=X^{\alpha}_{k}\not\in L\mathfrak{g}_{>k}$. Furthermore, it is enough to consider the case when $X^{\alpha}_{k}$ is of the form $z^{k-\langle \theta,\alpha \rangle}\mathfrak{g}_{\alpha}$. If $\alpha\neq 0$, then one can consider a morphism from $SL_{2}$ to $G(\mathcal{K})$ corresponds to the root $\alpha+k-\langle \theta,\alpha \rangle$, then the statement essentially reduces to $SL_{2}$ representations, so we can choose $g^{\alpha}_{k}\in U_{\alpha}(z^{-\langle \theta,\alpha \rangle+k}\mathcal{O})$ that satisfies our requirement if $\alpha\neq 0$ . If $\alpha=0$, we can choose $g^{\alpha}_{k}\in T(\mathcal{O})$ that satisfies our requirement.
\end{proof}

\begin{lem}[Standard Form]\label{standard form 1}
Let $A=\sum_{i\geq 0} a_{i}z^i\in\mathfrak{g}(\mathcal{O})$. There exists $g\in G(\mathcal{O})$ such that
\begin{align*}
dgg^{-1}+{\rm Ad}(g)A\frac{dz}{z}= B\frac{dz}{z}
\end{align*}
where $B=\sum_{i\geq 0}b_{i}z^i$ and $b_{i}$ lies in the generalized eigenspace of the operator $[b_{0},-]$ with eigenvalue $i$.
\end{lem}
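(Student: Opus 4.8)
The plan is to normalize the connection one degree at a time, realizing the normalization as a $z$-adically convergent infinite product of gauge transformations each supported in a single degree. Write $N := [a_0,-]$ for the adjoint action of the residue on $\mathfrak{g}$, and decompose $\mathfrak{g} = \bigoplus_{\lambda \in k} V_\lambda$ into the generalized eigenspaces of $N$, where $N-\lambda$ acts nilpotently on $V_\lambda$. Any gauge transformation by an element of $G(\mathcal{O})$ congruent to the identity modulo $z$ preserves the constant term, so the residue is untouched throughout and $b_0 = a_0$; moreover $b_0 \in V_0$ automatically since $[a_0,a_0]=0$. The target condition ``$b_i$ lies in the generalized eigenspace of $[b_0,-]$ with eigenvalue $i$'' then reads $b_i \in V_{\bar{i}}$, where $\bar{i}$ is the image of the integer $i$ in $k$; in particular $V_{\bar{i}} = 0$, hence $b_i = 0$, whenever $\bar{i}$ is not an eigenvalue of $N$.

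The core is the inductive step. Assume the connection already has the form $\sum_{j\geq 0} a_j z^j$ with $a_j \in V_{\bar{j}}$ for all $j < i$. To correct the degree-$i$ coefficient I would apply the gauge transformation $g_i \in \mathcal{G}_{\geq i}$ furnished by Lemma~\ref{exp} with $X_i = z^i Y_i \in L\mathfrak{g}_{\geq i}$ for a constant $Y_i \in \mathfrak{g}$ to be chosen, arranging (as in the construction of Lemma~\ref{exp}) that the leading term of $g_i$ is exactly $z^i Y_i$. Since ${\rm Ad}(g_i) = {\rm id} + [z^i Y_i,-] \bmod L\mathfrak{g}_{>i}$, the degree-$i$ coefficient of ${\rm Ad}(g_i)A$ is $a_i + [Y_i,a_0]$, and since $g_i \equiv 1 + z^i Y_i \bmod z^{i+1}$ the degree-$i$ coefficient of $d g_i\, g_i^{-1}$ (as a multiple of $\frac{dz}{z}$) is $i Y_i$. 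Hence the new degree-$i$ coefficient is
\begin{align*}
a_i + [Y_i,a_0] + i Y_i = a_i - (N - i) Y_i.
\end{align*}
Because $g_i \in \mathcal{G}_{\geq i}$, all coefficients in degrees $< i$ are left unchanged, so the normalization achieved at lower orders persists.

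The choice of $Y_i$ is dictated by the eigenspace decomposition: the operator $N - \bar{i}$ preserves each $V_\lambda$ and acts as $(\lambda - \bar{i}) + (\text{nilpotent})$, so it is invertible on $\bigoplus_{\lambda \neq \bar{i}} V_\lambda$ and fails to be invertible only on $V_{\bar{i}}$. Decomposing $a_i = \sum_\lambda a_i^{(\lambda)}$, I would set $Y_i := \sum_{\lambda \neq \bar{i}} (N-\bar{i})^{-1} a_i^{(\lambda)}$, so that the new coefficient becomes $a_i - \sum_{\lambda \neq \bar{i}} a_i^{(\lambda)} = a_i^{(\bar{i})} \in V_{\bar{i}}$, as required. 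Iterating over $i = 1,2,3,\dots$ produces $g_1, g_2, \dots$ with $g_i \equiv 1 \bmod z^i$; the partial products $g_n \cdots g_1$ are $z$-adically Cauchy, so the product $g = \cdots g_3 g_2 g_1$ converges in $G(\mathcal{O})$ and carries $A\frac{dz}{z}$ into the desired standard form $B\frac{dz}{z}$.

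The main obstacle is characteristic-$p$ bookkeeping rather than a conceptual difficulty. In characteristic zero one would simply take $g_i = \exp(z^i Y_i)$; here the exponential need not be available, which is precisely why Lemma~\ref{exp} is invoked to manufacture a substitute realizing the same infinitesimal gauge transformation to leading order. One must also keep track that all relevant eigenvalues live in $k$ and that the matching condition is $\lambda = \bar{i}$ in $k$, not equality of integers: distinct integers congruent modulo $p$ index the same eigenspace, which is consistent since the standard form only constrains $b_i$ up to the $\mathbb{F}_p$-reduction of $i$ (note that any irrational component of the residue, having eigenvalue outside $\mathbb{F}_p$, is automatically removed by the same step). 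The one point demanding genuine care is verifying that the leading term of $d g_i\, g_i^{-1}$ is exactly $i Y_i z^i \frac{dz}{z}$ for the $g_i$ supplied by Lemma~\ref{exp} — in particular pinning down its leading term up to the central directions — and confirming that all higher-order corrections feed only into later induction steps.
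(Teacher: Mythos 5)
Your proposal is correct and follows essentially the same route as the paper: a degree-by-degree induction using Lemma~\ref{exp} to realize a gauge transformation with leading term $z^iY_i$, computing the new degree-$i$ coefficient as $a_i-(N-i)Y_i$, and choosing $Y_i$ to project $a_i$ onto the generalized eigenspace $V_{\bar i}$. The only difference is that you make explicit the inversion of $N-\bar i$ on $\bigoplus_{\lambda\neq\bar i}V_\lambda$ and the $z$-adic convergence of the infinite product, which the paper leaves implicit.
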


\begin{proof}
We shall construct a sequence of elements $g_{k}$, where $g_{k}$ lies in the $k$-th congruence subgroup of $G(\mathcal{O})$ such that if we take $g=g_{k}g_{k-1}\cdots g_{0}$, then after applying gauge action of $g$, we have
\begin{align*}
g \ast A\frac{dz}{z}=(\sum_{i\geq 0}b_{i}z^i)\frac{dz}{z},
\end{align*}
such that $b_{i}$ lies in the generalized eigenspace of $b_{0}$ with eigenvalue $i$ for all $i\leq k$. For $k=0$, we can take $g_{0}=1$. Suppose now we have chosen the elements $g_{0},g_{1},\cdots,g_{k} \in G(\mathcal{O})$ such that we get $(\sum_{i\geq 0}b_{i} z^i)\frac{dz}{z}$, where $b_{i}$ lies in the generalized eigenspace of $b_{0}$ with eigenvalue $i$ for all $i\leq k$. Abusing the notation, let
\begin{align*}
A=b_0+b_1 z + \dots+ b_k z^k + a_{k+1} z^{k+1} + \dots \text{ .}
\end{align*}
Taking an arbitrary element $y \in\mathfrak{g}$, let $x\in G(\mathcal{O})$ be the element satisfying the condition of Lemma \ref{exp} for $y z^{k+1}$. Note that
\begin{align*}
dx x^{-1}=(k+1)yz^{k+1}\frac{dz}{z} \mod z^{k+2}\mathfrak{g}(\mathcal{O})\frac{dz}{z}, \quad {\rm Ad}(x)A=(A+[yz^{k+1},A])\frac{dz}{z} \mod z^{k+2} \mathfrak{g}(\mathcal{O})\frac{dz}{z}.
\end{align*}
After gauge transform by $x$, we get
\begin{align*}
x \ast A= dx x^{-1}+{\rm Ad}(x)A\frac{dz}{z}&=\left( - (k+1)yz^{k+1}+ (A+[yz^{k+1},A])\mod z^{k+2} \mathfrak{g}(\mathcal{O})\right)\frac{dz}{z}\\
&=(\sum_{i=0}^k b_i z_i) \frac{dz}{z}+ \left( - (k+1)y+a_{k+1}+[y,b_0]   \right) z^{k+1} \frac{dz}{z} \mod z^{k+2} \mathfrak{g}(\mathcal{O})\frac{dz}{z}.
\end{align*}
We can choose an element $y \in\mathfrak{g}$ such that $(k+1)y+a_{k+1}+[y,b_0]$ lies in the generalized eigenspace of $b_0$ with eigenvalue $(k+1)$. This finishes the proof of this lemma.
\end{proof}

Given any logarithmic connection $d-A \frac{dz}{z}$, where $A=\sum_{i \geq 0} a_i z^i$, we can assume that under gauge transformation, $a_i$ lies in the generalized eigenspace of the operator $[a_0,-]$ by Lemma \ref{standard form 1}, and a logarithmic connection in this form will be called \emph{in standard form}. We will also need a more general version of Lemma~\ref{standard form 1} which works over an Artinian local algebra over $k$:
\begin{lem}\label{standard form over artinian rings}
Let $(R,\mathfrak{m})$ be an Artinian local algebra over $k$. Let $A=\sum_{i \geq 0} a_i z^i \in \mathfrak{g}(\mathcal{O}_R)$ be an element, where $\mathcal{O}_R:=R[[z]]$. Suppose that $a_{0}\in\zeta+\mathfrak{m}\otimes\mathfrak{g}$, where $\zeta\in\mathfrak{g}$. Let $\mathfrak{g}_{\lambda}$ be the generalized eigenspace of adjoint action of $\zeta$ with eigenvalue $\lambda$. Then there exists $g\in G(\mathcal{O}_R)$ such that
\begin{align*}
dgg^{-1}+{\rm Ad}(g)A\frac{dz}{z}=B\frac{dz}{z},
\end{align*}
where $B=\sum_{i \geq 0} b_{i}z^{i}$ and each $b_{i}$ lies in the $\mathfrak{g}_{i}\otimes R$.
\end{lem}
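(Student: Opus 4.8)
The plan is to imitate the inductive gauge-transformation argument of Lemma~\ref{standard form 1}, but with two changes forced by the Artinian base: the generalized eigenspace decomposition $\mathfrak{g}=\bigoplus_\lambda \mathfrak{g}_\lambda$ is taken with respect to the \emph{fixed} element $\zeta$ rather than the (deformed) constant term, and one must keep track of the contribution of the maximal ideal $\mathfrak{m}$. The structural facts I will lean on are the compatibility $[\mathfrak{g}_\mu,\mathfrak{g}_\lambda]\subseteq \mathfrak{g}_{\mu+\lambda}$ of the $\mathrm{ad}(\zeta)$-decomposition and the nilpotence of $\mathfrak{m}$ (so $\mathfrak{m}^N=0$ for some $N$, as $R$ is Artinian local), which upgrades ``invertible mod $\mathfrak{m}$'' to ``invertible''.

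First I would normalize the constant term. Since $a_0\in\zeta+\mathfrak{m}\otimes\mathfrak{g}$, I claim there is a $z$-independent $g_0\in G(R)$ with $\mathrm{Ad}(g_0)a_0\in\mathfrak{g}_0\otimes R$. The derivative at $(e,\zeta)$ of the map $(g,\xi)\mapsto \mathrm{Ad}(g)\xi$ (with $\xi$ ranging over $\mathfrak{g}_0\otimes R$) is $(u,\eta)\mapsto -\mathrm{ad}(\zeta)(u)+\eta$, whose image is $\mathrm{im}(\mathrm{ad}(\zeta))+\mathfrak{g}_0=\mathfrak{g}$, because $\mathrm{ad}(\zeta)$ is invertible on every $\mathfrak{g}_\lambda$ with $\lambda\neq 0$. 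Lifting successively through the finite filtration $\mathfrak{m}\supseteq\mathfrak{m}^2\supseteq\cdots\supseteq\mathfrak{m}^N=0$ and using smoothness of $G$, this surjectivity lets me solve for $g_0$ step by step. Conjugating by $g_0$ contributes no $dg_0g_0^{-1}$ term, so I may henceforth assume $a_0\in\mathfrak{g}_0\otimes R$.

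Next I would run the induction as in Lemma~\ref{standard form 1}. Assume $b_0,\dots,b_{k-1}$ already lie in the correct eigenspaces, with $b_0=a_0\in\mathfrak{g}_0\otimes R$. Applying the construction of Lemma~\ref{exp} (with $\theta=0$) over $\mathcal{O}_R$ to $yz^k$ produces $x$ in the $k$-th congruence subgroup with $\mathrm{Ad}(x)=\mathrm{id}+[yz^k,-]\bmod L\mathfrak{g}_{>k}$; after this gauge transform the lower coefficients are untouched and the coefficient of $z^k$ becomes
\begin{align*}
(k\,\mathrm{id}-\mathrm{ad}(a_0))(y)+a_k.
\end{align*}
The key point is that $\mathrm{ad}(a_0)$ preserves the decomposition $\bigoplus_\lambda(\mathfrak{g}_\lambda\otimes R)$, since $a_0\in\mathfrak{g}_0\otimes R$ and $[\mathfrak{g}_0,\mathfrak{g}_\lambda]\subseteq\mathfrak{g}_\lambda$. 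On $\mathfrak{g}_\lambda\otimes R$ the operator equals $(k-\lambda)\mathrm{id}-P$, where $P$ reduces modulo $\mathfrak{m}$ to the nilpotent $\mathrm{ad}(\zeta)-\lambda\,\mathrm{id}$; hence a high power of $P$ lands in $\mathrm{End}(\mathfrak{g}_\lambda)\otimes\mathfrak{m}$ and $P$ is nilpotent, so $(k-\lambda)\mathrm{id}-P$ is invertible whenever $k-\lambda\neq 0$ in $k$. Choosing $y\in\bigoplus_{\lambda\neq k}(\mathfrak{g}_\lambda\otimes R)$ to cancel the components of $a_k$ outside $\mathfrak{g}_k\otimes R$ leaves the new $z^k$-coefficient in $\mathfrak{g}_k\otimes R$, completing the inductive step.

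Finally, as each $g_k$ lies in the $k$-th congruence subgroup, the product $g=\cdots g_2g_1g_0$ converges $z$-adically in $G(\mathcal{O}_R)$ and realizes the asserted normal form. The main obstacle, relative to the field case of Lemma~\ref{standard form 1}, is precisely the interaction between the $\mathfrak{m}$-direction of $a_0$ and the eigenspace decomposition: one cannot decompose with respect to $a_0$ itself, so I must first move $a_0$ into $\mathfrak{g}_0\otimes R$ and then check that the operators governing the induction stay invertible over $R$; the combination of $[\mathfrak{g}_0,\mathfrak{g}_\lambda]\subseteq\mathfrak{g}_\lambda$ with the nilpotence of $\mathfrak{m}$ is exactly what makes both of these work.
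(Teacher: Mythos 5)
Your proof is correct and follows essentially the same route as the paper: the paper's three-line sketch reduces the key solvability of $y=nP-[\zeta,P]$ (for $n\neq\lambda$) to the field case via a filtration of $R$ by ideals with $\mathfrak{m}$-trivial subquotients and then invokes the induction of Lemma~\ref{standard form 1}, which is exactly your inductive gauge argument with the invertibility of $(k-\lambda)\,\mathrm{id}-P$ obtained instead from nilpotence of $P$. Your explicit preliminary step conjugating $a_0$ into $\mathfrak{g}_0\otimes R$ by a constant $g_0\in G(R)$ is a detail the paper's sketch leaves implicit but which is genuinely needed to secure $b_0\in\mathfrak{g}_0\otimes R$ and to make $\mathrm{ad}(a_0)$ preserve the eigenspace decomposition, so it is a welcome addition.
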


\begin{proof}
There exists a filtration of $R$ by ideals $I_{i}$ such that $I_{i}/I_{i+1}$ is annihilated by $\mathfrak{m}$. Using this filtration, one can show that for any element $y\in\mathfrak{g}_{\lambda}\otimes R$ and any integer $n\neq\lambda$, there exists $P\in\mathfrak{g}\otimes R$ such that $y=nP-[\zeta,P]$. Now one can use the same argument as in Lemma~\ref{standard form 1} to finish the proof.
\end{proof}

\subsection{Irrational Case}
Let $d - A\frac{dz}{z}$ be a logarithmic connection, where $A=\sum_{i \geq 0} a_i z^i$. Let $a_0=\tau+\sigma+n$, where $\tau$ is the rational part and $\sigma$ is the irrational part (see \S\ref{sect pre}). We first consider a special case that the eigenvalues of $[a_0,-]$ are not rational, i.e. $\tau=0$.

\begin{cor}\label{standard form of irr connection}
Let $A=\sum_{i \geq 0} a_i z^i$, where $a_0=\sigma+n$ with trivial rational part ($\tau=0$). Then there exists $g\in G(\mathcal{O})$ such that $dgg^{-1}+{\rm Ad}(g)A\frac{dz}{z}\in\mathfrak{g}(\mathcal{O}^p)\frac{dz}{z}$.
\end{cor}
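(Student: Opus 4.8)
The plan is to reduce the statement to the Standard Form of Lemma~\ref{standard form 1} and then exploit the irrationality hypothesis $\tau=0$ to see that only powers $z^{pj}$ can survive. First I would apply Lemma~\ref{standard form 1} to produce $g\in G(\mathcal{O})$ with
\[
dgg^{-1}+{\rm Ad}(g)A\frac{dz}{z}=B\frac{dz}{z},\qquad B=\sum_{i\geq 0}b_i z^i,
\]
where each $b_i$ lies in the generalized eigenspace of the operator $[b_0,-]$ on $\mathfrak{g}$ with eigenvalue $i$. Since the gauge element constructed in Lemma~\ref{standard form 1} is a product $\cdots g_1g_0$ with $g_0=1$ and each $g_k$ ($k\geq 1$) in the $k$-th congruence subgroup, one has $g\equiv 1 \bmod z$, so the leading coefficient is unchanged: $b_0=a_0=\sigma+n$, with semisimple part $\sigma$ and nilpotent part $n$.

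The key point is a constraint on which coefficients can be nonzero. Because $[\sigma,n]=0$, the Jordan decomposition of the operator $[b_0,-]={\rm Ad}(\sigma+n)$ on $\mathfrak{g}$ is $[\sigma,-]+[n,-]$ with $[\sigma,-]$ semisimple and $[n,-]$ nilpotent, so the eigenvalues of $[b_0,-]$ are exactly the numbers $\alpha(\sigma)$, as $\alpha$ ranges over the roots, together with $0$. In the conclusion of Lemma~\ref{standard form 1} the eigenvalue attached to $b_i$ is the image of the integer $i$ in $k$, that is $i \bmod p\in\mathbb{F}_p\subseteq k$. By the defining property of the irrational part recalled in \S\ref{sect pre}, $\alpha(\sigma)\notin\mathbb{F}_p$ for every root $\alpha$; hence the only eigenvalue of $[b_0,-]$ lying in $\mathbb{F}_p$ is $0$. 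Consequently the generalized eigenspace with eigenvalue $i \bmod p$ vanishes whenever $p\nmid i$, which forces $b_i=0$ for all such $i$.

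It then follows that $B=\sum_{p\mid i}b_i z^i$ involves only powers of $z$ divisible by $p$, so $B\in\mathfrak{g}(k[[z^p]])=\mathfrak{g}(\mathcal{O}^p)$, using that $k$ is perfect and therefore $\mathcal{O}^p=k[[z^p]]$. Hence $g\ast A\frac{dz}{z}=B\frac{dz}{z}\in\mathfrak{g}(\mathcal{O}^p)\frac{dz}{z}$, as required. The only step needing care is the eigenvalue bookkeeping of the second paragraph: identifying the spectrum of $[b_0,-]$ with $\{\alpha(\sigma)\}\cup\{0\}$ and invoking $\alpha(\sigma)\notin\mathbb{F}_p$ to conclude that $0$ is the unique $\mathbb{F}_p$-eigenvalue. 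Everything else is immediate from Lemma~\ref{standard form 1}.
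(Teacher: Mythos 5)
Your argument is correct and is exactly the route the paper takes: the paper's proof is the one-line remark that the corollary is a direct consequence of Lemma~\ref{standard form 1}, and your proposal simply spells out the implicit eigenvalue bookkeeping (the spectrum of $[b_0,-]$ is $\{\alpha(\sigma)\}\cup\{0\}$, no nonzero element of which lies in $\mathbb{F}_p$ when $\tau=0$, so $b_i=0$ unless $p\mid i$). Nothing is missing; this is the intended reading of the paper's proof.
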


\begin{proof}
This is a direct result of Lemma \ref{standard form 1}.
\end{proof}

This corollary shows that a logarithmic connection in this case is equivalent to a logarithmic connection in $\mathfrak{g}(\mathcal{O}^p) \frac{dz}{z}$, which is in standard form.  On the other hand, if $a_{0}$ has non-zero rational eigenvalues, then it is unclear whether $A$ can be put into $\mathfrak{g}(\mathcal{O}^p)$ via $G(\mathcal{O})$. But nonetheless one can show that $A$ can be put into $\mathfrak{g}(\mathcal{K}^p)\frac{dz}{z}$ via $G(\overline{\mathcal{K}})$:

\begin{lem}\label{standard form 2}
Let $A=\sum_{i\geq 0} a_{i}z^i\in\mathfrak{g}(\mathcal{O})$. Then there exists $g\in G(\overline{\mathcal{K}})$ such that
\begin{align*}
dgg^{-1}+{\rm Ad}(g)A\frac{dz}{z}=C\frac{dz}{z},
\end{align*}
where $C\in\mathfrak{g}(\mathcal{K}^p)$. Moreover, if we choose a splitting of the embedding $\mathfrak{t}_{\mathbb{F}_{p}}\hookrightarrow \mathfrak{t}$ as well as a set of representatives for the semisimple orbits for the adjoint action of $G$ on $\mathfrak{g}$ denoted by $\mathcal{D}$, then one can assume that $(C_{0})_{ss}\in\mathcal{D}$.
\end{lem}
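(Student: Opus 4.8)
I want to show that any $A = \sum_{i \geq 0} a_i z^i \in \mathfrak{g}(\mathcal{O})$ can be gauged, via some $g \in G(\overline{\mathcal{K}})$, into the form $C\frac{dz}{z}$ with $C \in \mathfrak{g}(\mathcal{K}^p)$, and moreover with $(C_0)_{ss} \in \mathcal{D}$. Let me think about this carefully.

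**Reducing to the semisimple residue.** First, I note that by Lemma~\ref{standard form 1} (Standard Form) I can already put $A$ into standard form over $\mathcal{O}$, so that each $a_i$ lies in the generalized eigenspace of $[a_0,-]$ with eigenvalue $i$. The issue is that the eigenvalues of $[a_0,-]$ may be nonzero rationals (i.e. the rational part $\tau$ of $a_0$ is nontrivial), and in that case the terms $a_i$ with $i \not\equiv 0 \pmod p$ obstruct membership in $\mathfrak{g}(\mathcal{O}^p)$. Corollary~\ref{standard form of irr connection} handles exactly the case $\tau = 0$: there one lands in $\mathfrak{g}(\mathcal{O}^p)\frac{dz}{z} \subseteq \mathfrak{g}(\mathcal{K}^p)\frac{dz}{z}$. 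So the content of this lemma is to reduce the general case to the irrational case by an explicit gauge transformation defined over $\overline{\mathcal{K}}$ that "absorbs" the rational part $\tau$.

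**Absorbing $\tau$ by a fractional-power gauge.** The rational part $\tau \in \mathfrak{t}_{\mathbb{F}_p}$ lifts to a rational cocharacter, so I can form the element $z^{\tau} \in T(\overline{\mathcal{K}})$ (this requires passing to the algebraic closure $\overline{\mathcal{K}}$ to take fractional powers of $z$, which is precisely why the lemma is stated over $\overline{\mathcal{K}}$ rather than $\mathcal{O}$). I compute the gauge action of $g = z^{-\tau}$: since $d(z^{-\tau})(z^{-\tau})^{-1} = -\tau \frac{dz}{z}$ and $\mathrm{Ad}(z^{-\tau})$ scales the root space $\mathfrak{g}_\alpha$ by $z^{-\alpha(\tau)}$, conjugating $A\frac{dz}{z}$ shifts the residue by $-\tau$ and rescales the higher-order terms. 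The key numerical point is that $\alpha(\tau) \in \mathbb{F}_p$ for every root, and on the component $a_i$ living in the generalized $[\tau,-]$-eigenspace with eigenvalue $\equiv i$, the factor $z^{i}$ combines with $z^{-\alpha(\tau)}$ to produce integer powers lying in $p\mathbb{Z}$ after the standard-form normalization; this is what lands the result in $\mathfrak{g}(\mathcal{K}^p)$. After this transformation the new residue has trivial rational part, and I am reduced to the situation of Corollary~\ref{standard form of irr connection}.

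**The normalization $(C_0)_{ss} \in \mathcal{D}$ and the main obstacle.** Having produced $C\frac{dz}{z}$ with $C \in \mathfrak{g}(\mathcal{K}^p)$, it remains to arrange $(C_0)_{ss} \in \mathcal{D}$. Since $\mathcal{D}$ is a set of representatives for semisimple adjoint orbits, I can apply a \emph{constant} gauge transformation $h \in G$ carrying $(C_0)_{ss}$ to its chosen representative; a constant conjugation satisfies $dh\,h^{-1} = 0$ and preserves the condition $C \in \mathfrak{g}(\mathcal{K}^p)$, so this final adjustment is harmless. I expect the main technical obstacle to be the careful bookkeeping in the second step: verifying that after the $z^{-\tau}$ gauge the powers of $z$ appearing in each graded piece genuinely lie in $p\mathbb{Z}$, which depends on combining the standard-form eigenvalue condition (eigenvalue $i$ on $a_i$) with the splitting $\mathfrak{t}_{\mathbb{F}_p} \hookrightarrow \mathfrak{t}$ and the fact that $\alpha(\tau) \in \mathbb{F}_p$. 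Getting the fractional exponents to cancel modulo $p$ cleanly — rather than merely modulo $1$ — is the delicate arithmetic heart of the argument, and it is exactly where tameness of the weight (denominator coprime to $p$) is used.
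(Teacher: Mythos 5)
Your proposal follows essentially the same route as the paper's proof: first put $A$ in standard form via Lemma~\ref{standard form 1}, then apply a torus-valued gauge transformation by a power of $z$ to absorb the rational part $\tau$ of the residue, and finally normalize $(C_0)_{ss}$ into $\mathcal{D}$ by a constant conjugation. The one step you flag as the ``delicate arithmetic heart'' and leave open is resolved in the paper by a specific choice that your phrasing ``lift $\tau$ to a rational cocharacter and form $z^{\tau}\in T(\overline{\mathcal{K}})$'' glosses over: one must take a lift $\theta_\tau$ with \emph{integral} pairings $\langle\theta_\tau,\alpha\rangle\in\mathbb{Z}$ satisfying $\langle\theta_\tau,\alpha\rangle\equiv\alpha(\tau)\bmod p$ for every root $\alpha$ (any lift of $\tau\in X_*(T)\otimes\mathbb{F}_p$ back to $X_*(T)$ does this, so the gauge element in fact lies in $T(\mathcal{K})$), after which the standard-form relation $[\tau,a_i]=ia_i$ immediately gives exponents $i-\langle\theta_\tau,\alpha\rangle\in p\mathbb{Z}$ --- whereas a genuinely fractional lift would not even yield integer powers of $z$, so this choice is the substance of the argument rather than bookkeeping, and once it is made the conclusion $C\in\mathfrak{g}(\mathcal{K}^p)$ is direct, with no further appeal to Corollary~\ref{standard form of irr connection} needed.
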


\begin{proof}
By Lemma~\ref{standard form 1}, we can assume that each $a_{i}$ lies in the generalized eigenspace of $a_{0}$ with eigenvalue $i$. Let $a_0=\tau+\sigma+n$, where $\tau$ is the rational part and $\sigma$ is the irrational part. Then we have $[\tau,a_{i}]= i a_{i}$. One may choose a weight $\theta_\tau \in X_*(T) \otimes_{\mathbb{Z}} \mathbb{Q}$ such that
\begin{align*}
\langle \theta_\tau,\alpha \rangle \in\mathbb{Z} \quad \text{ and } \quad \langle \theta_\tau,\alpha \rangle=\alpha(\tau) \text{ mod } p,
\end{align*}
for any root $\alpha$. Now taking the gauge action by $z^{\theta_\tau}$, we get the desired form.
\end{proof}

\begin{lem}\label{automorphisms}
If $g \in G(\mathcal{K})$ such that $g \ast C\frac{dz}{z}=D\frac{dz}{z}$, where $C$ and $D$ are of the form in Lemma~\ref{standard form 2}. Then $g \in G(\mathcal{K}^p)$.
\end{lem}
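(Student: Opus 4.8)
The plan is to turn the gauge identity into a first order differential equation for $g$ and then exploit that $C$ and $D$, lying in $\mathfrak{g}(\mathcal{K}^p)$, are annihilated by the Euler derivation $\partial:=z\frac{d}{dz}$. Writing out $g\ast C\frac{dz}{z}=D\frac{dz}{z}$ and pairing with the vector field dual to $\frac{dz}{z}$ gives
\begin{align*}
\partial g = Dg-gC,
\end{align*}
where I view $g$ as an invertible matrix in a fixed faithful representation $V$ of $G$. Since $C,D\in\mathfrak{g}(\mathcal{K}^p)$ we have $\partial C=\partial D=0$, so the $\mathcal{K}$-linear operator $\Phi(N):=DN-NC$ on $\mathrm{Mat}(V)(\mathcal{K})$ commutes with $\partial$. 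This is the structural fact I would build on.

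Next I would use the eigenspace decomposition of $\partial$. Because $k$ is perfect, $\mathcal{K}=\bigoplus_{i=0}^{p-1}z^i\mathcal{K}^p$, and $\partial$ acts on the summand $z^i\mathcal{K}^p$ by the scalar $i\in\mathbb{F}_p$; hence $\mathrm{Mat}(V)(\mathcal{K})=\bigoplus_{i=0}^{p-1}z^i\mathrm{Mat}(V)(\mathcal{K}^p)$ is the eigenspace decomposition of $\partial$. Writing $g=\sum_{i=0}^{p-1}z^ig_i$ with $g_i\in\mathrm{Mat}(V)(\mathcal{K}^p)$ and using that $\Phi$ preserves each summand, the equation $\partial g=\Phi(g)$ splits into
\begin{align*}
Dg_i-g_iC=i\,g_i,\qquad i=0,\dots,p-1.
\end{align*}
Thus it suffices to show that for $i\in\{1,\dots,p-1\}$ the only solution $N\in\mathrm{Mat}(V)(\mathcal{K}^p)$ of $DN-NC=iN$ is $N=0$: then $g=g_0\in\mathrm{Mat}(V)(\mathcal{K}^p)$, and since $G\subseteq GL(V)$ is cut out by equations over $k\subseteq\mathcal{K}^p$, this forces $g\in G(\mathcal{K}^p)$, as desired.

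The vanishing of $g_i$ for $i\neq 0$ is where the standard form hypotheses enter, and it is the step I expect to be the main obstacle. The relation $DN=N(C+i)$ says $N$ intertwines $C+i$ with $D$, so a nonzero $N$ would force an eigenvalue of $D$ and an eigenvalue of $C$, computed over $\overline{\mathcal{K}^p}$, to differ by the nonzero constant $i\in\mathbb{F}_p^\times$. Since $g$ is a gauge equivalence of two regular singular connections already put in the form of Lemma~\ref{standard form 2}, the semisimple parts of the residues of $C$ and $D$ coincide (both equal the chosen representative in $\mathcal{D}$), namely the irrational element $\sigma$, for which $\alpha(\sigma)\notin\mathbb{F}_p$ for every root $\alpha$ (see \S\ref{sect pre}). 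I would run a successive approximation in the variable $w=z^p$, beginning at the residue: the eigenvalues of $C$ and $D$ have the shape (a weight of $\sigma$) $+\,O(w)$, and two of them can differ by a constant only if the associated weights already differ by an element of $\mathbb{F}_p$ when evaluated on $\sigma$. The genuinely delicate point is that this obstruction must be shown to avoid $\mathbb{F}_p^\times$; this is precisely the content of the irrationality of $\sigma$. I expect the cleanest way to verify it is to decompose the equation along the root spaces $\mathfrak{g}_\alpha$, exactly as in Lemmas~\ref{exp} and~\ref{standard form 1}, so that the obstruction in the $\alpha$-direction is controlled by the single value $\alpha(\sigma)\notin\mathbb{F}_p$ rather than by an uncontrolled difference; the leading term of $N\mapsto DN-NC-iN$ is then invertible and an induction on the $w$-adic order yields $N=0$.
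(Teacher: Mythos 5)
Your argument is correct and is essentially the paper's own proof: the paper likewise fixes a faithful representation, rewrites the gauge identity as $dg+gC\,\tfrac{dz}{z}=Dg\,\tfrac{dz}{z}$, isolates the lowest coefficient $g_j$ with $p\nmid j$, derives $jg_j=d_0g_j-g_jc_0$, and concludes from the irrationality of the semisimple residues that the operator $T\mapsto d_0T-Tc_0$ has no nonzero rational eigenvalue. Your repackaging via the $\partial$-eigenspace decomposition $\mathcal{K}=\bigoplus_{i=0}^{p-1}z^i\mathcal{K}^p$ is a cleaner way to organize the same computation, and the delicate point you flag (injectivity of $N\mapsto DN-NC-iN$ for $i\in\mathbb{F}_p^{\times}$, reduced to its leading term at the residue) is exactly the step the paper also settles by the same appeal to irrationality.
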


\begin{proof}
We may assume that the semisimple part of $c_{0}$ and $d_{0}$ are in $\mathfrak{t}$, and we also choose an embedding of $G$ into $GL_{n}$. Thus, we shall view $g$, $C$, $D$ as elements in $GL_{n}(\mathcal{O})$ and $\mathfrak{g}\mathfrak{l}_{n}(\mathcal{O})$. Write $g=\sum_{i\geq k}g_{i}z^i$. Let $j$ be the smallest index such that $j$ does not divide $p$ and $g_{j}\neq 0$. Then using equation
\begin{align*}
dg+gA=Bg,
\end{align*}
we conclude that $jg_{j}+g_{j}c_{0}=d_{0}g_{j}$. Consider the operator on $\mathfrak{g}\mathfrak{l}_{n}$ given by
\begin{align*}
T \rightarrow d_{0}T-T c_{0}.
\end{align*}
Since the semisimple parts of $a_{0}$ and $b_{0}$ are in $\mathfrak{t}$ and that $a_{0}$ and $b_{0}$ has no nonzero rational eigenvalues, we conclude that the only rational eigenvalue of this operator is zero. Therefore, $j=0$ and then $j$ is divisible by $p$.
\end{proof}

Let ${\rm Locsys}^{tame}_{G,irr}(\mathbb{D})$ be the category of tame $G$-local systems on the formal disc $\mathbb{D}$ such that the semisimple part of the residue is irrational. Similarly, let ${\rm Higgs}^{tame}_{G,irr}(\mathbb{D}')$ be the category of logarithmic $G$-Higgs bundles on $\mathbb{D}'$, the Frobenius twist of $\mathbb{D}$, such that the semisimple part of the residue is irrational.

\begin{prop}\label{standard form in irrational}
The category ${\rm Locsys}^{tame}_{G,irr}(\mathbb{D})$ is equivalent to ${\rm Higgs}^{tame}_{G,irr}(\mathbb{D}')$.
\end{prop}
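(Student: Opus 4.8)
The plan is to exhibit an explicit equivalence by matching the two sides on their standard forms. The key observation established by the local analysis is that in the irrational case ($\tau=0$), both categories have the same coarse description. On the local system side, Corollary~\ref{standard form of irr connection} shows that every tame connection $d+A\frac{dz}{z}$ with irrational residual is gauge-equivalent to one with $A\in\mathfrak{g}(\mathcal{O}^p)$, i.e. all Taylor coefficients lie in the $p$-th power subring. On the Higgs side, a tame $G$-Higgs field on $\mathbb{D}'$ is by definition an element of $\mathfrak{g}(\mathcal{O}')\frac{dw}{w}$ modulo the adjoint action of $G(\mathcal{O}')$. The first step is therefore to identify $\mathcal{O}^p\cong\mathcal{O}'$ via the Frobenius twist: writing $w=z^p$ (up to the twist), the subring $\mathcal{O}^p=k[[z^p]]$ is canonically $\mathcal{O}'=k[[w]]$, so that $\mathfrak{g}(\mathcal{O}^p)\cong\mathfrak{g}(\mathcal{O}')$.

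Next I would set up the candidate functor. Sending a connection in standard form $d+C\frac{dz}{z}$ with $C\in\mathfrak{g}(\mathcal{O}^p)$ to the Higgs field $C\frac{dw}{w}\in\mathfrak{g}(\mathcal{O}')\frac{dw}{w}$ gives the correspondence on objects; one reads $C$, regarded as an element of $\mathfrak{g}(\mathcal{O}')$ under the identification above, as a Higgs field on the Frobenius twist. To see this is well-defined and faithful, I would invoke Lemma~\ref{automorphisms}: if two standard forms $C\frac{dz}{z}$ and $D\frac{dz}{z}$ are gauge-equivalent via $g\in G(\mathcal{K})$, then in fact $g\in G(\mathcal{K}^p)$, and in our integral situation $g\in G(\mathcal{O}^p)=G(\mathcal{O}')$. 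Crucially, for $g\in G(\mathcal{K}^p)$ the logarithmic derivative term $dg\,g^{-1}$ vanishes (since $d(z^{pi})=p\,z^{pi-1}dz=0$ in characteristic $p$), so the gauge action on the connection side collapses precisely to the adjoint action $C\mapsto\mathrm{Ad}(g)C$ on the Higgs side. This is the heart of the matching: Frobenius-twisted gauge transformations act on connections with trivial connection term, hence exactly as adjoint transformations on Higgs fields.

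The remaining steps are to verify essential surjectivity and fully-faithfulness. Essential surjectivity follows because every Higgs field $C\frac{dw}{w}$, read back through $\mathcal{O}'\cong\mathcal{O}^p\subseteq\mathcal{O}$, produces a connection $d+C\frac{dz}{z}$ with irrational residual whose image is the given Higgs field. For fullness and faithfulness one must check that the adjoint-orbit stabilizers match the gauge-automorphism groups on both sides; again Lemma~\ref{automorphisms} confines all relevant transformations to $G(\mathcal{O}^p)=G(\mathcal{O}')$, where the two actions literally coincide. To promote this to an equivalence of categories (not merely a bijection of orbit sets), I would note that a morphism in either category is an element of $G(\mathcal{O})$ intertwining two objects; Corollary~\ref{standard form of irr connection} lets us reduce both objects to standard form, and then Lemma~\ref{automorphisms} forces the intertwiner into $G(\mathcal{O}^p)$, identifying the Hom-sets.

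The main obstacle I anticipate is the careful bookkeeping of the \emph{groupoid} (rather than set) structure: one must show not only that orbits correspond but that automorphism groups correspond functorially, so that the equivalence respects all morphisms and not just isomorphism classes. Concretely, the subtlety is that an arbitrary morphism of connections need not begin life in $G(\mathcal{O}^p)$; it is only after reducing to standard form via Corollary~\ref{standard form of irr connection} that Lemma~\ref{automorphisms} applies. Ensuring these reductions are compatible across morphisms — i.e. that the standard-form functor is itself well-defined up to coherent natural isomorphism — is where the genuine work lies, and I would handle it by checking that the gauge transformations produced in the proof of Lemma~\ref{standard form 1} depend algebraically on the input, so that the whole construction descends to a functor between the two quotient stacks.
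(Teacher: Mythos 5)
Your proposal is correct and follows essentially the same route as the paper: reduce to standard form via Corollary~\ref{standard form of irr connection}, identify $\mathfrak{g}(\mathcal{O}^p)$ with $\mathfrak{g}(\mathcal{O}')$ under the Frobenius twist, and use Lemma~\ref{automorphisms} to confine intertwiners to $G(\mathcal{O}^p)$, where the gauge action degenerates to the adjoint action since $dg\,g^{-1}=0$. Your extra care about functoriality on morphisms (not just orbits) is a reasonable elaboration of what the paper dismisses as ``easy to check.''
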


\begin{proof}
Take an element $A\frac{dz}{z} \in {\rm Locsys}^{tame}_{G,irr}(\mathbb{D})$, where $A=\sum_{i \geq 0} a_i z^i$. Since the semisimple part of $a_0$ is irrational, this element $A\frac{dz}{z}$ is equivalent to $A'\frac{dz}{z} \in \mathfrak{g}(\mathcal{O}^p) \frac{dz}{z}$ under the gauge action by Corollary \ref{standard form of irr connection}. By the Frobenius twist, the logarithmic connection $A'\frac{dz}{z}$ can be considered as a logarithmic Higgs field over $\mathbb{D}'$, and we use the same notation $A'\frac{dw}{w}$. Furthermore, by Lemma \ref{automorphisms}, we get a well-defined functor
\begin{align*}
{\rm Locsys}^{tame}_{G,irr}(\mathbb{D}) & \rightarrow {\rm Higgs}^{tame}_{G,irr}(\mathbb{D}') \\
A & \rightarrow A' .
\end{align*}
It is easy to check that this functor induces an equivalence of categories.
\end{proof}

\subsection{Rational Case}
For any semisimple rational element $\tau \in \mathfrak{g}$, we equip the trivial $G$-torsor over $\mathbb{D}$ with a logarithmic $G$-connection given by $\tau \frac{dz}{z}$. Let $G'_{\tau}(\mathcal{O}) \subset G(\mathcal{O})$ be the group of automorphisms of $d - \tau\frac{dz}{z}$. Moreover, the elements in the Lie algebra $\mathfrak{g}'_{\tau}(\mathcal{O})$ of $G'_{\tau}(\mathcal{O})$ can be written as $A=\sum_{i\geq 0} a_{i}z^i$, where each $a_{i}$ lies in the generalized eigenspace of $\tau$ with eigenvalue $i$. Since for any $g \in G'_{\tau}(\mathcal{O})$, we have
\begin{align*}
dgg^{-1}+g\tau g^{-1}\frac{dz}{z}=\tau\frac{dz}{z},
\end{align*}
then gauge action of $G'_{\tau}(\mathcal{O})$ on its Lie algebra can be written as
\begin{align*}
g*A \frac{dz}{z}= dgg^{-1}+gAg^{-1} \frac{dz}{z}= \tau \frac{dz}{z}+g(A-\tau)g^{-1} \frac{dz}{z}.
\end{align*}

\begin{lem}\label{parahoric of rational semisimple}
The automorphism group $G'_{\tau}(\mathcal{O})$ can be identified with a parahoric subgroup over $\mathbb{D}'$.
\end{lem}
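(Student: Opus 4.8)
The plan is to conjugate $\tau$ into $\mathfrak{t}_{\mathbb{F}_p}$ and then solve the defining equation of $G'_\tau(\mathcal{O})$ explicitly, in close analogy with the computation of Lemma~\ref{explicit correspondence bewteen groups}, but with the Frobenius covering $z\mapsto z^p$ now playing the role of the tame covering of \S\ref{subsect torsor}. Since $\tau$ is semisimple and rational we may assume $\tau\in\mathfrak{t}_{\mathbb{F}_p}$. By definition $g\in G'_\tau(\mathcal{O})$ iff $dgg^{-1}+\mathrm{Ad}(g)\tau\frac{dz}{z}=\tau\frac{dz}{z}$; right-multiplying by $g$ turns this into the \emph{linear} equation
\begin{align*}
z\frac{dg}{dz}=[\tau,g].
\end{align*}
Fixing a faithful representation $G\hookrightarrow GL_n$ and expanding $g=\sum_{i\ge 0}g_iz^i$ as a matrix, this is equivalent to $[\tau,g_i]=i\,g_i$ for every $i$, i.e.\ each $g_i$ lies in the $\mathrm{ad}(\tau)$-eigenspace with eigenvalue $i \bmod p$ (recall all eigenvalues of $\mathrm{ad}(\tau)$ lie in $\mathbb{F}_p$). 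Thus $G'_\tau(\mathcal{O})$ is described very concretely: its torus part is $T(\mathcal{O}^p)$, the leading term satisfies $g(0)\in Z_G(\tau)$, and its $U_\alpha$-component is carried by the powers $z^i$ with $i\ge 0$ and $i\equiv\alpha(\tau)\ (\mathrm{mod}\ p)$.

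Next I would introduce the Frobenius coordinate $w=z^p$ on $\mathbb{D}'$, so that $\mathcal{O}'=k[[w]]\subseteq\mathcal{O}$ and $\mathcal{K}'=k((w))$, and choose a weight $\theta_\tau\in X_*(T)\otimes_{\mathbb Z}\mathbb{Q}$ as in Lemma~\ref{standard form 2}, with $\langle\theta_\tau,\alpha\rangle\in\mathbb{Z}$ and $\langle\theta_\tau,\alpha\rangle\equiv\alpha(\tau)\ (\mathrm{mod}\ p)$ for all roots $\alpha$. The element $z^{\theta_\tau}\in T(\mathcal{K})$ is the analogue of the matrix $\Delta$ of \S\ref{subsect torsor}, and I would conjugate by it, setting $\varsigma:=z^{-\theta_\tau}g\,z^{\theta_\tau}$ for $g\in G'_\tau(\mathcal{O})$. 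On the $U_\alpha$-component this multiplies the admissible monomials $z^i$ by $z^{-\langle\theta_\tau,\alpha\rangle}$, and the congruence $i\equiv\alpha(\tau)\equiv\langle\theta_\tau,\alpha\rangle\ (\mathrm{mod}\ p)$ forces the resulting exponent $i-\langle\theta_\tau,\alpha\rangle$ to be divisible by $p$; the torus part $T(\mathcal{O}^p)$ is fixed by conjugation. Hence $\varsigma\in G(\mathcal{K}')$, i.e.\ the conjugate descends to $\mathbb{D}'$ — this descent is precisely the positive-characteristic incarnation of the $\Gamma$-invariance used in \S\ref{subsect torsor}.

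Finally I would read off the root-group filtration of the descended group. Writing $r_\alpha\in\{0,\dots,p-1\}$ for the residue of $\langle\theta_\tau,\alpha\rangle$, the smallest admissible exponent on $U_\alpha$ is $r_\alpha-\langle\theta_\tau,\alpha\rangle=p\lceil -\langle\theta_\tau,\alpha\rangle/p\rceil$, which in the coordinate $w$ is $w^{m_\alpha}\mathcal{O}'$ with $m_\alpha=\lceil -\langle\theta_\tau/p,\alpha\rangle\rceil$. Together with the torus part $T(\mathcal{O}')$ this is exactly the generating data, in the sense of Definition~\ref{defn local parah gp}, of the parahoric subgroup $\mathcal{P}_{\theta'}(\mathcal{O}')$ over $\mathbb{D}'$ attached to the weight $\theta':=\theta_\tau/p$. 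Conjugation by $z^{\theta_\tau}$ therefore identifies $G'_\tau(\mathcal{O})$ with $\mathcal{P}_{\theta'}(\mathcal{O}')$, which proves the lemma; as a consistency check, $\tau=0$ gives $\theta'=0$ and recovers $G'_0(\mathcal{O})=G(\mathcal{O}^p)=G(\mathcal{O}')$.

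I expect the main obstacle to be upgrading this termwise matching of root subgroups and torus to an honest equality $z^{-\theta_\tau}G'_\tau(\mathcal{O})z^{\theta_\tau}=\mathcal{P}_{\theta'}(\mathcal{O}')$, rather than a mere inclusion. One direction is immediate, since any single monomial supported in the correct eigenspace solves $z\,dg/dz=[\tau,g]$ and hence lies in $G'_\tau(\mathcal{O})$, so all the parahoric generators are accounted for; for the reverse one must know that $G'_\tau(\mathcal{O})$ is determined by its intersections with the root subgroups and the torus. This is where I would invoke the Bruhat–Tits/Iwahori structure underlying Definition~\ref{defn local parah gp} (the same input provided by \cite{ChGP} for $\mathcal{P}_{\boldsymbol\theta}$), which characterizes the integral points of the parahoric exactly by the support conditions matched above. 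A secondary point to handle with care is the choice of integral lift $\theta_\tau$: two lifts differ by $p$ times an integral cocharacter and yield parahorics over $\mathbb{D}'$ that are conjugate by an element of $T(\mathcal{K}')$, so the parahoric is canonical only up to conjugacy — which is all the statement asserts.
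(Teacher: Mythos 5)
Your main computation is the same as the paper's: pick an integral lift $\theta_\tau$ with $\langle\theta_\tau,\alpha\rangle\equiv\alpha(\tau)\ (\mathrm{mod}\ p)$ for all roots, conjugate by $z^{\theta_\tau}$, use the congruence to see that all surviving exponents become divisible by $p$, and read off that the descended object is the filtration defining $\mathcal{P}_{\theta_\tau/p}$ over $\mathbb{D}'$. The paper records this step only at the level of Lie algebras, which is exactly the content of your termwise matching of the torus part and the $U_\alpha$-components, and your consistency check at $\tau=0$ is correct.

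The divergence is in how the obstacle you correctly single out gets closed. You propose to finish by citing a Bruhat--Tits characterization of the parahoric's integral points ``exactly by the support conditions matched above.'' That is not quite the right tool: valuation conditions of this kind characterize the \emph{full} stabilizer of the corresponding point of the building, whereas the parahoric of Definition~\ref{defn local parah gp} is the \emph{connected} stabilizer, which for a general reductive $G$ can be a proper subgroup (the quotient is a nontrivial component group already for non--simply-connected groups). So matching support conditions only places $z^{-\theta_\tau}G'_\tau(\mathcal{O})z^{\theta_\tau}$ between the parahoric and the full stabilizer; an additional input is needed to conclude equality with the parahoric itself. The paper supplies precisely this input by proving that $G'_\tau(\mathcal{O})$ is connected: the evaluation map $G'_\tau(\mathcal{O})\rightarrow G(\mathcal{O})\rightarrow G$ has image $Z_G(\tau)$, which is a Levi subgroup of $G$ and hence connected, and the kernel is pro-unipotent; a connected smooth subgroup with the matched Lie algebra must then coincide with the parahoric. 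If you replace your appeal to support conditions by this connectedness argument (or otherwise establish connectedness of $G'_\tau(\mathcal{O})$), your proof is complete; as written, that single step is a genuine gap.
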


\begin{proof}
With the same approach as Lemma \ref{standard form 2}, let $\theta_\tau$ be a weight such that
\begin{align*}
\langle \theta_\tau,\alpha \rangle \in\mathbb{Z} \quad \text{ and } \quad \langle \theta_\tau,\alpha \rangle=\alpha(\tau) \text{ mod } p.
\end{align*}
Then from the definition of $G'_{\tau}(\mathcal{O})$, it is easy to see that the Lie algebra of ${\rm Ad}(z^{-\theta_\tau})(G'_{\tau}(\mathcal{O}))$ is the Lie algebra of the parahoric subgroup of $G(k((z^p)))$ over $\mathbb{D}'$ defined by the weight $\frac{\theta_\tau}{p}$. It remains to show $G'_{\tau}(\mathcal{O})$ is connected. It is easy to see that the image of the evaluation morphism $G'_{\tau}(\mathcal{O})\rightarrow G(\mathcal{O})\rightarrow G$ is equal to $Z_{G}(\tau)$, which is a Levi subgroup of $G$, hence connected. This proves the claim.

\end{proof}

\begin{lem}\label{normal form with rational}
Take two elements
\begin{align*}
A=\sum_{i\geq 0} a_{i}z^i \in \mathfrak{g}(\mathcal{O}), \quad B=\sum_{i\geq 0} b_{i}z^i \in \mathfrak{g}(\mathcal{O})
\end{align*}
in standard form. Assume $a_{0}$ and $b_{0}$ has the same rational semisimple part $\tau$. Suppose that $g \in G(\mathcal{O})$ such that $dgg^{-1}+gAg^{-1}\frac{dz}{z}=B\frac{dz}{z}$, then $g\in G'_{\tau}(\mathcal{O})$.
\end{lem}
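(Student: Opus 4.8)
The plan is to reduce the statement to the irrational case already settled in Lemma~\ref{automorphisms}, by conjugating away the common rational part $\tau$. As in Lemma~\ref{standard form 2} and Lemma~\ref{parahoric of rational semisimple}, choose a weight $\theta_\tau \in X_*(T)\otimes_{\mathbb{Z}}\mathbb{Q}$ with $\langle \theta_\tau,\alpha\rangle \in \mathbb{Z}$ and $\langle \theta_\tau,\alpha\rangle \equiv \alpha(\tau) \bmod p$ for every root $\alpha$. Since $A$ and $B$ are in standard form, each $a_i$ (resp.\ $b_i$) is supported on the root spaces $\mathfrak{g}_\alpha$ with $\alpha(\tau)\equiv i \bmod p$, and the crucial point is that $a_0$ and $b_0$ have the \emph{same} rational part $\tau$, so a single weight $\theta_\tau$ governs both. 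Gauge transforming $A\frac{dz}{z}$ and $B\frac{dz}{z}$ by $z^{\theta_\tau}$ as in Lemma~\ref{standard form 2} therefore produces connections $C\frac{dz}{z}$ and $D\frac{dz}{z}$ with $C,D\in\mathfrak{g}(\mathcal{K}^p)$ whose residues $C_0,D_0$ have irrational semisimple part; after fixing a splitting of $\mathfrak{t}_{\mathbb{F}_p}\hookrightarrow\mathfrak{t}$ and a set of orbit representatives we may assume $C,D$ are exactly of the form appearing in Lemma~\ref{standard form 2}.

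Next I transport the gauge equivalence. Setting $\tilde g := \mathrm{Ad}(z^{\theta_\tau})(g)$ and using that the gauge action is functorial under the fixed conjugation by $z^{\theta_\tau}$, the relation $g\ast A\frac{dz}{z}=B\frac{dz}{z}$ becomes $\tilde g \ast C\frac{dz}{z}=D\frac{dz}{z}$ with $\tilde g\in G(\mathcal{K})$. Because $C$ and $D$ are of the form required by Lemma~\ref{automorphisms} (they lie in $\mathfrak{g}(\mathcal{K}^p)$ and their residues carry no nonzero rational eigenvalues), that lemma applies directly and yields $\tilde g\in G(\mathcal{K}^p)$.

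It remains to untwist. We now know $g=\mathrm{Ad}(z^{-\theta_\tau})(\tilde g)$ with $\tilde g\in G(\mathcal{K}^p)$, while by hypothesis $g\in G(\mathcal{O})$. Decomposing by weight spaces after fixing an embedding $G\hookrightarrow GL_n$, the $\mathfrak{g}_\alpha$-component of $\tilde g$ involves only powers of $z$ divisible by $p$, whereas the $\mathfrak{g}_\alpha$-component of $g$ has nonnegative $z$-adic valuation; combining the two constraints forces the $\mathfrak{g}_\alpha$-component of the relevant conjugate of $\tilde g$ to lie in $U_\alpha\big((z^p)^{m_\alpha}\mathcal{O}^p\big)$ with $m_\alpha=\lceil -\alpha(\theta_\tau/p)\rceil$, and the torus part to lie in $T(\mathcal{O}^p)$. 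By the explicit parahoric identification in Lemma~\ref{parahoric of rational semisimple}, this says precisely that $\tilde g$ lies in the parahoric subgroup of $G(\mathcal{K}^p)$ attached to the weight $\theta_\tau/p$, equivalently $g\in G'_\tau(\mathcal{O})$, which is the desired conclusion.

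The step I expect to be most delicate is this last one: one must match the pole bounds coming from $g\in G(\mathcal{O})$ and from $\tilde g\in G(\mathcal{K}^p)$ against the root-by-root description of $G'_\tau(\mathcal{O})$ supplied by Lemma~\ref{parahoric of rational semisimple}, keeping the sign conventions for $z^{\pm\theta_\tau}$ in Lemmas~\ref{standard form 2} and~\ref{parahoric of rational semisimple} consistent throughout. A self-contained alternative, bypassing the twist, is to expand $g=\sum_i g_i z^i$ and analyze the recursion extracted from $zg'+gA=Bg$, namely
\[
i\,g_i-b_0 g_i+g_i a_0=\sum_{j=1}^{i} b_j g_{i-j}-\sum_{j=0}^{i-1} g_j a_{i-j},
\]
showing inductively, via the operator $X\mapsto \tau X-X\tau$ as in Lemma~\ref{automorphisms}, that each $g_i$ lies in its $i$-eigenspace; however this is essentially Lemma~\ref{automorphisms} applied after the twist, so I would present the twisting argument as the main line and treat this recursion only as a cross-check.
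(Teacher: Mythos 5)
Your argument is correct in substance but follows a genuinely different route from the paper's. The paper proves the lemma directly: it first peels off the constant term $g_0$ of $g$, using the residue of the gauge equation to show $g_0 \in Z_G(\tau) \subseteq G'_\tau(\mathcal{O})$ and thereby reducing to $a_0 = b_0$; it then runs exactly the coefficient recursion you relegate to a ``cross-check'' --- taking the smallest $k$ with $g_k$ outside the eigenspace of $\tau$ with eigenvalue $k$, observing that all cross terms $g_i a_j$, $b_j g_i$ with $i<k$ already lie in the generalized eigenspace of $a_0$ with eigenvalue $k$ by standard form, and deriving a contradiction from $kg_k + g_k a_0 - a_0 g_k$ lying in that eigenspace. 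So your ``alternative'' is the paper's main line, and your main line (twist by $z^{\pm\theta_\tau}$ to kill the rational part, invoke Lemma~\ref{automorphisms}, untwist) is the alternative. Your route has the advantage of not needing the preliminary reduction to $a_0=b_0$ (Lemma~\ref{automorphisms} tolerates $C\neq D$) and of exhibiting the same twist that drives Lemma~\ref{parahoric of rational semisimple}, Lemma~\ref{normal form for connection with zero p curvature} and Theorem~\ref{classification for parahoric connection}; its cost is that the untwisting step rests on the group-level identity $G(\mathcal{O}) \cap \mathrm{Ad}(z^{\theta_\tau})\bigl(G(\mathcal{K}^p)\bigr) = G'_\tau(\mathcal{O})$, which the paper only verifies on Lie algebras (plus a connectedness argument) in Lemma~\ref{parahoric of rational semisimple}, whereas the direct recursion works with matrix coefficients throughout and sidesteps this. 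Two small points to settle if you write it up: with the normalization $\langle\theta_\tau,\alpha\rangle \equiv \alpha(\tau) \bmod p$, the twist that lands $C,D$ in $\mathfrak{g}(\mathcal{K}^p)$ is by $z^{-\theta_\tau}$ (as in Lemma~\ref{normal form for connection with zero p curvature}), not $z^{\theta_\tau}$; and the constant conjugations needed to put $(C_0)_{ss},(D_0)_{ss}$ into the set $\mathcal{D}$ of Lemma~\ref{standard form 2} lie in $G(k)\subseteq G(\mathcal{K}^p)$ and hence do not disturb the conclusion $\tilde g\in G(\mathcal{K}^p)$ --- worth saying explicitly.
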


\begin{proof}
First, we represent $g$ as $g=g_{1}g_{0}$ where $g_0 \in G$ and $g_{1}$ lies in the kernel of the evaluation map $G(\disc)\rightarrow G$. By the condition $dgg^{-1}+gAg^{-1}\frac{dz}{z}=B\frac{dz}{z}$, one concludes that $g_{0} \in Z_{G}(\tau)$. Since $Z_{G}(\tau)\subseteq G'_{\tau}(\disc)$, one may assume that $a_{0}=b_{0}$.

Now we fix a representation of $G$ and view $A$ as a matrix. The condition $dgg^{-1}+gAg^{-1}\frac{dz}{z}=B\frac{dz}{z}$ gives us
\begin{align*}
dg+gA\frac{dz}{z}=Bg\frac{dz}{z}.
\end{align*}
By the property of $G'_\tau(\mathcal{O})$, the element $g \in G'_{\tau}(\mathcal{O})$ if and only if $[\tau,g_{i}]=i g_{i}$ when $g_i \neq 0$, where $g=\sum_{i \geq 0}g_iz^i$. Let $k$ be the smallest index such that $g_{k}\neq 0$ and $g_{k}$ is not in the eigenspace of $\tau$ with eigenvalue $k$. Then we have
\begin{align*}
kg_{k}+\sum_{i+j=k}g_{i}a_{j}=\sum_{i+j=k}b_{j}g_{i}.
\end{align*}
If $i<k$ and $a_{j}g_{i}\neq 0$ or $g_{i}b_{j}\neq 0$, then by our choice of $k$, $a_{j}p_{i}$ satisfies
\begin{align*}
[\tau ,a_{j}g_{i}]=(i+j)a_{j}g_{i}=ka_{j}g_{i}.
\end{align*}
A similar formula also holds for $g_{i}b_{j}$. Thus, the element $kg_{k}+g_{k}a_{0}-a_{0}g_{k}$ lies in the generalized eigenspace of $a_{0}$ with eigenvalue $k$. By the assumption that $g_{k}$ is not in this subspace, we arrives at a contradiction.
\end{proof}

We fix a rational semisimple element $\tau$ in $\mathfrak{g}$. Let ${\rm Locsys}^{tame}_{G,\tau}(\mathbb{D})$ be the category of tame $G$-local systems on the formal disc such that the rational semisimple part of the residue is conjugate to $\tau$. More precisely, if $d-A\frac{dz}{z} \in {\rm Locsys}^{tame}_{G,\tau}(\mathbb{D})$ where $A=\sum_{i \geq 0} a_i z^i$, then the rational semisimple part of $a_0$ is conjugate to $\tau$. Let ${\rm Higgs}^{tame}_{G'_\tau, irr}(\mathbb{D}')$ be the category of logarithmic $G'_\tau(\mathcal{O})$-Higgs bundles on the Frobenius twist of $\mathbb{D}$ such that the semisimple part of the residue is irrational (up to conjugation). Here we regard $G'_\tau(\mathcal{O})$ as the parahoric group $\mathcal{P}_{\frac{\theta_\tau}{p}}(\mathcal{O}')$ determined in Lemma \ref{parahoric of rational semisimple}.

\begin{prop}\label{standard form in general}
The category ${\rm Locsys}^{tame}_{G,\tau}(\mathbb{D})$ is equivalent to ${\rm Higgs}^{tame}_{G'_\tau, irr}(\mathbb{D}')$. Moreover, let $\Gamma$ be a cyclic group with order $d$ coprime to $p$, and then the category ${\rm Locsys}^{tame}_{G,\tau}([\mathbb{D} / \Gamma])$ is equivalent to ${\rm Higgs}^{tame}_{G'_\tau, irr}([\mathbb{D}'/ \Gamma])$.
\end{prop}

\begin{proof}
As we discussed at the beginning of this subsection, we find that
\begin{align*}
g*A \frac{dz}{z}= dgg^{-1}+gAg^{-1} \frac{dz}{z}= \tau \frac{dz}{z}+g(A-\tau)g^{-1} \frac{dz}{z},
\end{align*}
where $g \in G'_{\tau}(\mathcal{O})$. We use the same approach as in Proposition~\ref{standard form in irrational} to construct a map
\begin{align*}
{\rm Locsys}^{tame}_{G,\tau}(\mathbb{D}) \rightarrow {\rm Higgs}^{tame}_{G'_\tau, irr}(\mathbb{D}'), \quad A \rightarrow {\rm Ad}(z^{-\theta_\tau}) (A-\tau).
\end{align*}
By Lemma~\ref{normal form with rational} as well as the action of $G'_{\tau}(\mathcal{O})$ on its Lie algebra, it is easy to check that this map is well-defined and bijective.

For the version of stacks, it is equivalent to consider logarithmic $(\Gamma,G)$-connection on $\mathbb{D}$ such that the rational semisimple part of the residue is $\tau$. As we discussed in \S\ref{subsect_tame_local_sys}, a logarithmic $G$-connection $d - A \frac{dz}{z}$ is $\Gamma$-equivariant means that $A \in \mathfrak{g}(\mathcal{O})$ is $\Gamma$-equivariant. Clearly, $(A-\tau) \frac{dz}{z}$ is also $\Gamma$-equivariant as a logarithmic Higgs field. Therefore, the correspondence also holds for stacks.
\end{proof}

For future use, let us note the following:
\begin{lem}\label{normal form for connection with zero p curvature}
Let $A=\sum_{i \geq 0} a_i z^i$ be an element in $\mathfrak{g}'_\tau(\mathcal{O})\frac{dz}{z}$, where $\tau$ is the rational part of the $a_0$. Then the logarithmic $G$-connection $d - A\frac{dz}{z}$ has zero $p$-curvature if and only if $A=\tau$. More generally, let $A\in\mathfrak{g}(\mathcal{O}_R)$, where $(R,\mathfrak{m})$ is an Artinian local algebra over $k$. Suppose that the rational part of $a_{0}$ is $\tau$ when modulo $\mathfrak{m}$. Then the connection $d - A\frac{dz}{z}$ has zero $p$-curvature if and only if there exists $g\in G(\mathcal{O}_R)$ such that $dgg^{-1}+gAg^{-1}\frac{dz}{z}=\tau \frac{dz}{z}$.
\end{lem}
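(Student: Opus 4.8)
The plan is to compute the $p$-curvature directly using the logarithmic derivation $\partial := z\frac{d}{dz}$, which satisfies $\partial^{[p]}=\partial$ on $\mathcal{O}$. Fixing a faithful representation $G\hookrightarrow GL_n$, the $p$-curvature of $d+A\frac{dz}{z}$ is the $\mathcal{O}$-linear operator $\psi=(\partial+A)^p-(\partial+A)$, and the condition $\psi=0$ is invariant under the gauge action because gauge conjugates $\psi$ by $\Ad$. The easy direction is immediate: if $A=\tau$ then $\partial$ commutes with the constant $\tau$, so $(\partial+\tau)^p=\partial^{[p]}+\tau^{[p]}=\partial+\tau$ using $\partial^{[p]}=\partial$ and $\tau^{[p]}=\tau$ (as $\tau\in\mathfrak{t}_{\mathbb{F}_p}$), whence $\psi=0$. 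In the Artinian statement the same computation shows $d+\tau\frac{dz}{z}$ has vanishing $p$-curvature, and gauge-invariance of $\psi$ then gives the ``if'' direction there as well.

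For the converse I would first isolate the residue. Reducing modulo $z$, the positive-degree part $A-a_0\in z\mathfrak{g}(\mathcal{O})$ only contributes terms in $z\mathfrak{g}(\mathcal{O})$ to $\psi$, so the constant term of $\psi$ equals $a_0^{[p]}-a_0$. Writing $a_0=\tau+\sigma+n$ with $\tau,\sigma$ commuting semisimple and $n$ nilpotent commuting with both, additivity of $(-)^{[p]}$ on commuting elements yields
\begin{align*}
a_0^{[p]}-a_0=(\sigma^{[p]}-\sigma)+(n^{[p]}-n),
\end{align*}
a sum of a semisimple element and a commuting nilpotent one. Hence $\psi=0$ forces both summands to vanish: $n^{[p]}=n$ with $n$ nilpotent gives $n=0$, while $\sigma^{[p]}=\sigma$ forces the eigenvalues of $\sigma$ into $\mathbb{F}_p$, and together with $\pi(\sigma)=0$ this forces $\sigma=0$. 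Thus $a_0=\tau$.

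It remains to kill the terms $a_i$ with $i\ge 1$, and this is the step I expect to be the main obstacle. One route is to untwist by $z^{-\theta_\tau}$ as in Lemma~\ref{standard form 2}, with $\theta_\tau$ the weight of Lemma~\ref{parahoric of rational semisimple}, replacing the connection up to $G(\overline{\mathcal{K}})$-gauge by $d+C\frac{dz}{z}$ with $C\in\mathfrak{g}(\mathcal{K}^p)$. Since $\partial C=0$ for such $C$, the commuting computation gives $\psi=C^{[p]}-C$, so $\psi=0$ is equivalent to $C^{[p]}=C$, i.e.\ $C$ is semisimple with eigenvalues in $\mathbb{F}_p$. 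The difficulty is to convert this rigidity back into the vanishing of the higher $a_i$ through the grading of $\mathfrak{g}'_\tau(\mathcal{O})$ by $\ad(\tau)$-eigenspaces. Concretely I would argue order by order: the linearisation of $\psi$ at $\tau$ is governed by the operator $\mathcal{D}=z\frac{d}{dz}+\ad(\tau)$, and one must show that the coefficient of each $z^m$ in $\psi$ obstructs the first nonzero higher term. The resonant degrees, where the $\ad(\tau)$-eigenvalue of $a_m$ matches $m$ modulo $p$, are exactly where the naive cancellation is delicate and where the nonlinear corrections must be controlled; handling these is the heart of the argument.

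For the Artinian statement I would put $A$ into standard form over $\mathcal{O}_R$ by Lemma~\ref{standard form over artinian rings}, taking $\zeta$ a lift of $a_0\bmod\mathfrak{m}$, so that the generalized $\ad(\zeta)$-eigenspace decomposition is available over $R$. Using the filtration of $R$ by ideals $I_i$ with $I_i/I_{i+1}$ annihilated by $\mathfrak{m}$, the hypothesis $\psi=0$ lets me solve, degree by degree and filtration-step by filtration-step, equations of the shape $y=nP-[\zeta,P]$ exactly as in that proof, and assemble the solutions into an element $g\in G(\mathcal{O}_R)$ realising $dgg^{-1}+\Ad(g)A\frac{dz}{z}=\tau\frac{dz}{z}$. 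The output is gauge-equivalence to $\tau$ rather than equality, which is all one can expect over $R$; specialising to $R=k$ and invoking Lemma~\ref{normal form with rational} (which confines the gauge to $G'_\tau(\mathcal{O})$) then upgrades this to the equality $A=\tau$ of the first statement.
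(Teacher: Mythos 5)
Your reduction follows the paper's route — untwist by $z^{-\theta_\tau}$ so that the connection matrix lands in $\mathfrak{g}(\mathcal{K}^p)$ and the $p$-curvature becomes $C^{[p]}-C$ — and your treatment of the residue (splitting $a_0^{[p]}-a_0$ into commuting semisimple and nilpotent pieces to force $\sigma=n=0$) is correct and in fact more careful than the paper's. The genuine gap is that you stop exactly where the proof should close: having obtained $C^{[p]}=C$, you announce that converting this back into the vanishing of the higher $a_i$ requires an order-by-order resonance analysis, which you then do not carry out. No such analysis is needed; the ``resonant degrees'' are a red herring. Since $\mathrm{Ad}(z^{\theta_\tau})$ commutes with the associative $p$-th power in a faithful representation, $C^{[p]}=C$ is equivalent to $(A-\tau)^{[p]}=A-\tau$, hence $A-\tau=(A-\tau)^{[p^k]}$ for every $k$; your residue step has already shown $A-\tau\in z\,\mathfrak{gl}_n(\mathcal{O})$, so the right-hand side lies in $z^{p^k}\mathfrak{gl}_n(\mathcal{O})$ for all $k$, forcing $A=\tau$. (Equivalently: $C^{[p]}=C$ makes $C$, hence $A-\tau$, semisimple with eigenvalues in $\mathbb{F}_p$, while its characteristic polynomial reduces to $x^n$ modulo $z$, so $A-\tau=0$.) This two-line observation is all that is hiding behind the paper's ``one concludes from this that $B=0$.''

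The same incompleteness affects your Artinian paragraph. Lemma~\ref{standard form over artinian rings} only puts $A$ into standard form with $b_i\in\mathfrak{g}_i\otimes R$; the equation $y=nP-[\zeta,P]$ is solvable precisely at non-resonant degrees, so it cannot by itself reduce $A$ to $\tau$, and you never say how the hypothesis $\Psi=0$ kills the remaining resonant coefficients — which is again the step above, now run along the filtration $I_i$ of $R$. Your closing move, deriving the field case from the Artinian case via Lemma~\ref{normal form with rational}, is sound in principle, but only once the Artinian statement is actually established; the paper's order of logic (field case first by the valuation argument, then the Artinian case by standard form plus the same argument) is the cleaner way to organize it.
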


\begin{proof}
It is well-known that in the logarithmic case, we have $(z\partial)^{(p)} = z\partial$ \cite[\S 1.2.2]{Ogu94}. Suppose that the logarithmic connection is $d - A \frac{dz}{z}$. It has zero $p$-curvature if and only if $A^p - A = 0$. Based on this fact, it is clear that when $A=\tau$, the logarithmic connection $d - A\frac{dz}{z}$ has zero $p$-curvature. Then we consider the other direction. Suppose that $d - A \frac{dz}{z}$ has zero $p$-curvature, where $A \in \mathfrak{g}'_\tau(\mathcal{O})$ with rational part $\tau$. With the same approach as Lemma \ref{standard form 2} and \ref{parahoric of rational semisimple}, we choose a weight $\theta_\tau \in X_*(T) \otimes_{\mathbb{Z}} \mathbb{Q}$. Let $B=z^{-\theta_\tau}\ast A\frac{dz}{z}={\rm Ad}(z^{-\theta_{\tau}})(A-\tau)\frac{dz}{z}$. Clearly, the logarithmic connection $d - B\frac{dz}{z}$ also has zero $p$-curvature. Note that $B$ is of the form $\sum b_{i}z^{pi}$ by Proposition~\ref{standard form in irrational}. Thus, $(z\partial)(B)=0$. Together with the fact $B^p-B=0$, one conclude that $B=0$. Therefore, we have $A=\tau$.

In the general situation, one can first use Lemma~\ref{standard form over artinian rings} to convert $A$ into the form
\begin{align*}
A= \sigma+\textrm{higher degree terms } \in \mathfrak{g}'_{\tau}(\mathcal{O}_R),
\end{align*}
where the semisimple part of $\sigma$ modulo $\mathfrak{m}$ is irrational. Then the condition that $p$-curvature of $A$ equals to zero implies that $A=\tau$ by the same argument as in the case over a field.
\end{proof}

\subsection{Parahoric Case}

In this subsection, we would like to generalize the results above to the case of parahoric subgroups. Namely, let $\parahwt(\mathcal{O})$ be a parahoric subgroup of $G(\mathcal{K})$ with a tame weight $\theta \in X_{*}(T) \otimes_{\mathbb{Z}}\mathbb{Q}$ and the weight $\theta$ can be regarded as an element in $\mathfrak{t}_{\mathbb{F}_{p}}$. In a similar way, we consider the gauge action of $\parahwt(\mathcal{O})$ on $\mathfrak{p}_\theta(\mathcal{O}) \frac{dz}{z}$. Then one may adapt the analysis on $\mathfrak{g}(\mathcal{O})\frac{dz}{z}$ to $\mathfrak{p}_\theta(\mathcal{O}) \frac{dz}{z}$, but we shall use a slightly different approach which is based on \cite{Moduli of parahoric torsors}.

Recall that a weight $\theta$ induces a natural decomposition of $\mathfrak{g}=\bigoplus\limits_{\lambda \in \mathbb{Q}} \mathfrak{g}_{\lambda}$ indexed by $\mathbb{Q}$, where $\mathfrak{g}_{\lambda}$ is the $\lambda$-th graded piece. This decomposition also induces a filtration of the loop Lie algebra $L\mathfrak{g}$ (see \S\ref{sect pre}). The following is a reformulation of the proof of \cite[Theorem 6]{Riemann Hilbert} in our setting:
\begin{lem}\label{standard form 1 parahoric}
Let $A\in\lieparahwt(\disc)\frac{dz}{z}$ be an element. Suppose that the weight zero component $A(0)$ is equal to $(\tau+\sigma+\sum a_{i}z^i)\frac{dz}{z}$, where $\tau$ is semisimple rational element and $\sigma$ is a semisimple irrational element such that
\begin{align*}
[\tau, a_{i}]=ia_{i}, \quad [\sigma, a_{i}]=0, \quad \sum a_{i} \text{ is nilpotent.}
\end{align*}
Then there exists $g\in\parahwt(\disc)$ such that
\begin{align*}
dgg^{-1}+{\rm Ad}(g)A=(\tau+\sigma+\sum A_{i}z^{i})\frac{dz}{z}
\end{align*}
where $A_{i}\in\mathfrak{g}_{\geq i}$, $[\tau, A_{i}]=iA_{i}$, $[\sigma, A_{i}]=0$.
\end{lem}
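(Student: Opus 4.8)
The plan is to reproduce the inductive normalization of Lemma~\ref{standard form 1}, but organized by the weight filtration $L\mathfrak{g}_{\geq k}$ attached to $\theta$ instead of by the naive $z$-adic degree, using Lemma~\ref{exp} in place of explicit one-parameter subgroups. Writing $A = \widehat{A}\,\frac{dz}{z}$ with $\widehat{A}\in\mathfrak{p}_\theta(\mathcal{O}) = L\mathfrak{g}_{\geq 0}$, I would decompose $\widehat{A} = \sum_{\mu\geq 0}\widehat{A}(\mu)$ into weight components, a monomial $X^\alpha z^i$ (with $X^\alpha\in\mathfrak{g}_\alpha$) having weight $i+\alpha(\theta)$ and $\widehat{A}(\mu)$ collecting the terms of weight $\mu\in\frac{1}{d}\mathbb{Z}_{\geq 0}$, so that $A(\mu)=\widehat{A}(\mu)\frac{dz}{z}$. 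The point is that the target standard form is exactly the condition that $\widehat{A}-(\tau+\sigma)$ lie in $\ker N$, where $N := z\partial_z - \operatorname{ad}(\tau+\sigma)$ acts on $L\mathfrak{g}$ as a derivation that is diagonal in the monomial basis, with eigenvalue $i-\alpha(\tau)-\alpha(\sigma)$ on $X^\alpha z^i$. Since $\alpha(\tau)\in\mathbb{F}_p$ and $i\in\mathbb{Z}$ while $\alpha(\sigma)$ is irrational unless it is $0$, this eigenvalue vanishes precisely when $\alpha(\sigma)=0$ and $i\equiv\alpha(\tau)\bmod p$, i.e.\ precisely when $[\sigma,X^\alpha z^i]=0$ and $[\tau,X^\alpha z^i]=i\,X^\alpha z^i$.

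The induction runs over the weights $k\in\frac{1}{d}\mathbb{Z}_{>0}$. By hypothesis the weight-$0$ component is $\tau+\sigma+\sum a_i z^i$, and the assumptions $[\tau,a_i]=ia_i$, $[\sigma,a_i]=0$ say exactly that $\sum a_i z^i\in\ker N$, so $\widehat{A}(0)$ is already in standard form; I only need to normalize the positive-weight components. Assuming $\widehat{A}(\mu)$ is in standard form for all $\mu<k$, I would pick $X_k\in L\mathfrak{g}(k)\subseteq L\mathfrak{g}_{\geq k}$ and apply Lemma~\ref{exp} to get $g_k\in\mathcal{G}_{\geq k}\subseteq\mathcal{P}_\theta(\mathcal{O})$ with $\operatorname{Ad}(g_k)=\operatorname{id}+\operatorname{ad}(X_k)\bmod L\mathfrak{g}_{>k}$. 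Exactly as in Lemma~\ref{standard form 1}, $dg_k g_k^{-1}=z\partial_z(X_k)\frac{dz}{z}\bmod L\mathfrak{g}_{>k}\frac{dz}{z}$ and $\operatorname{Ad}(g_k)\widehat{A}=\widehat{A}+[X_k,\widehat{A}]\bmod L\mathfrak{g}_{>k}$; because only the weight-$0$ part $\tau+\sigma+\sum a_i z^i$ of $\widehat{A}$ feeds into weight $k$ under the bracket, the weight-$k$ component of $\widehat{A}$ changes precisely by $N'(X_k)$, where $N' := z\partial_z - \operatorname{ad}(\tau+\sigma+\sum a_i z^i)$. Conjugation by $g_k$ moves $\widehat{A}$ only inside $L\mathfrak{g}_{\geq k}$, so the normalized lower weights, and in particular $\widehat{A}(0)$, are left untouched.

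The crux is to solve $N'(X_k) = -(\text{component of }\widehat{A}(k)\text{ off }\ker N)$ within the weight space $L\mathfrak{g}(k)$. I would exploit that $N' = N - \operatorname{ad}(\sum a_i z^i)$ with $N$ semisimple and $\operatorname{ad}(\sum a_i z^i)$ nilpotent; moreover these two operators commute, because $N$ is a derivation of the bracket and $\sum a_i z^i\in\ker N$, so $[N,\operatorname{ad}(\sum a_i z^i)]=\operatorname{ad}(N(\sum a_i z^i))=0$. Hence $N$ is the semisimple part of $N'$, the generalized kernel of $N'$ coincides with $\ker N$ (the standard form), and $N'$ is invertible on the $N$-nonzero part of $L\mathfrak{g}(k)$; this is where $X_k$ can be chosen to cancel the off-$\ker N$ part of $\widehat{A}(k)$, leaving $\widehat{A}(k)\in\ker N$. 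This eigenvalue analysis is the main obstacle: in characteristic $p$ one must match the integral $z\partial_z$-eigenvalue against $\alpha(\tau)\in\mathbb{F}_p$ modulo $p$, and the nilpotent term $\sum a_i$ forces one to pass to generalized eigenspaces, in the spirit of Lemma~\ref{standard form over artinian rings}. Finally, a correction at weight $k$ only affects $z$-degrees $i\geq k-\max_\alpha\alpha(\theta)$, so $g_k\to 1$ $z$-adically, the infinite product $g=\cdots g_k\cdots$ converges in $\mathcal{P}_\theta(\mathcal{O})$, and $g\ast A$ is in the asserted standard form.
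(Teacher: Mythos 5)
Your proof is correct and is essentially the argument the paper itself invokes: its ``proof'' of Lemma~\ref{standard form 1 parahoric} is a one-line reference to Boalch's Theorem~6, whose mechanism is exactly this induction over the weight filtration $L\mathfrak{g}_{\geq k}$, using Lemma~\ref{exp} at each positive weight $k$ to cancel the component of $\widehat{A}(k)$ lying outside $\ker\bigl(z\partial_z-{\rm ad}(\tau+\sigma)\bigr)$. You have additionally supplied the characteristic-$p$ details the paper leaves implicit, namely the $\mathbb{F}_p$-valued eigenvalue analysis and the nilpotence of ${\rm ad}(\sum a_i z^i)$ on each finite-dimensional weight space $L\mathfrak{g}(k)$ (which one verifies by noting that the injective evaluation $z\mapsto 1$ intertwines it with ${\rm ad}(\sum a_i)$).
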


Now we take an element in $A=\sum_{i \geq 0} a_i z^i \in \mathfrak{p}_\theta(\mathcal{O}_z)$, where we use $z$ to emphasize the local coordinate. By Proposition \ref{parahoric connection equivariant}, the gauge action of $\parahwt(\mathcal{O}_z)$ on the Levi quotient $\mathfrak{l}_\theta(\mathcal{O}_z)$ gets transformed into adjoint action of $G(\mathcal{O}_w)$ on $\mathfrak{g}$. Fixing a choice of $d$-th root of unity $\zeta$, we get the following:

\begin{lem}\label{correspondence between orbits}
There is a one-to-one correspondence between orbits of $\mathfrak{l}_\theta(\mathcal{O}_z)\frac{dz}{z}$ under the gauge action of $\mathcal{P}_\theta(\mathcal{O}_z)$ and the orbits of $\mathfrak{h}_\theta\frac{dw}{w}$ under the adjoint action of $Z_{G}(\zeta^{d\theta})$.
\end{lem}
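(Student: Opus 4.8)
The plan is to combine the Balaji--Seshadri descent of Proposition~\ref{parahoric connection equivariant} with the local nonabelian Hodge reduction of Propositions~\ref{standard form in irrational} and \ref{standard form in general}, so as to identify a Gauge orbit on the Levi quotient with the adjoint orbit of its residue under the centralizer $Z_G(\zeta^{d\theta})$. The point is that the left-hand side is an orbit problem on a loop algebra, while the right-hand side is governed only by the residue, so the heart of the matter is a rigidity statement showing that the higher-order terms carry no further Gauge invariant.

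First I would pass to the cover $\mathbb{D}_y$. By Lemma~\ref{explicit correspondence bewteen groups} and Proposition~\ref{parahoric connection equivariant}, conjugating by $\Delta(w)=w^{d\theta}$ and substituting $z=w^d$ transforms the Gauge action of $\mathcal{P}_\theta(\mathcal{O}_z)$ on $\mathfrak{l}_\theta(\mathcal{O}_z)\frac{dz}{z}$ into the $\Gamma$-equivariant action on $\mathbb{D}_y$, under which a Levi-valued field satisfies ${\rm Ad}(\zeta^{d\theta})\phi(w)=\phi(\zeta w)$ and the gauge group becomes $G(\mathcal{O}_w)^{\Gamma}$. Evaluating at $w=0$ identifies the constant part of this equivariant gauge group with $Z_G(\zeta^{d\theta})=Z_G(\rho(\gamma))$, and forces the residue to lie in its Lie algebra $\mathfrak{h}_\theta$; this is the origin of both the group and the Lie algebra appearing on the right-hand side.

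Next I would reduce each orbit to its residue. Lemma~\ref{standard form 1 parahoric} places any element in a standard form in which the weight-zero component is controlled by its semisimple residue, and the irrational/rational dichotomy of Propositions~\ref{standard form in irrational} and \ref{standard form in general}, applied to the reductive group $Z_G(\zeta^{d\theta})$ in place of $G$, shows that after a further equivariant gauge transformation only the constant residue in $\mathfrak{h}_\theta\frac{dw}{w}$ survives. This yields a well-defined map from $\mathcal{P}_\theta(\mathcal{O}_z)$-orbits to $Z_G(\zeta^{d\theta})$-adjoint orbits, and surjectivity is immediate because every constant element of $\mathfrak{h}_\theta$ already occurs as such a standard form. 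Injectivity I would obtain from the rigidity Lemmas~\ref{automorphisms} and \ref{normal form with rational}: if two standard forms with the same residue are Gauge-equivalent, those lemmas force the intertwining element to lie in the relevant stabilizer (respectively in $G(\mathcal{K}^p)$ and $G'_\tau(\mathcal{O})$), so the equivalence descends to an honest $Z_G(\zeta^{d\theta})$-conjugacy of residues.

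The main obstacle is precisely this last point. One must verify that the $\Delta$-conjugation matches the Gauge action upstairs with the bare adjoint action downstairs without leaving residual $\frac{dw}{w}$-terms: the shift by $d\theta$ coming from $d\Delta\,\Delta^{-1}$ has to be absorbed into the residue, exactly as in the computation $\Delta\ast A\frac{dw}{w}=d(\theta+{\rm Ad}(\Delta)A)\frac{dw}{w}$ used in the proof of Proposition~\ref{parahoric connection equivariant}. One must also check that the standard-form reduction genuinely loses no Gauge invariant beyond the conjugacy class of the residue, which is where the nilpotent and irrational parts of the residue must be handled with care. Carefully tracking the $\Gamma$-equivariance through Lemma~\ref{standard form 1 parahoric} is therefore where the bulk of the work lies.
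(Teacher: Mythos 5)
Your first paragraph (pass to $\mathbb{D}_y$ via $\Delta=w^{d\theta}$, identify the value at $w=0$ of the equivariant gauge group with $Z_G(\zeta^{d\theta})$ and the residue with an element of its Lie algebra) is the right frame, and is essentially all the paper itself supplies, since its proof of Lemma~\ref{correspondence between orbits} is a one-line deferral to \cite[Lemma 4]{Riemann Hilbert}. The genuine problem is your reduction step: you assert that Propositions~\ref{standard form in irrational} and~\ref{standard form in general}, applied to $Z_G(\zeta^{d\theta})$, show that after a further equivariant gauge transformation ``only the constant residue in $\mathfrak{h}_\theta\frac{dw}{w}$ survives.'' That is not what those results say, and it is false in characteristic $p$. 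Corollary~\ref{standard form of irr connection} and Proposition~\ref{standard form in irrational} reduce a tame connection with irrational residue to an element of $\mathfrak{g}(\mathcal{O}^p)\frac{dz}{z}$, i.e.\ to a full power series in $z^p$, and Lemma~\ref{automorphisms} shows that this entire series --- not merely its constant term --- is a Gauge invariant: it is exactly the Higgs field on the Frobenius twist. Since every integer is congruent to some eigenvalue of $[a_0,-]$ modulo $p$ at orders $p,2p,\dots$, the characteristic-zero ``non-resonant'' reduction of a connection to its residue never applies here; if it did, Theorem~\ref{classification for parahoric connection} and the global correspondence would degenerate into a statement about residues alone. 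Consequently the map you build from Gauge orbits of the full loop algebra to adjoint orbits of residues is well defined but not injective, and Lemmas~\ref{automorphisms} and~\ref{normal form with rational} cannot repair this: they constrain intertwiners between standard forms, they do not show that equal residues imply Gauge equivalence.

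The way to read Lemma~\ref{correspondence between orbits} so that it is true is that $\mathfrak{h}_\theta$ is the finite-dimensional Lie algebra of $Z_G(\zeta^{d\theta})$ (the Levi factor $\mathfrak{g}_0$), and the ``orbits of $\mathfrak{l}_\theta(\mathcal{O}_z)\frac{dz}{z}$ under the Gauge action'' are orbits for the action \emph{induced on the Levi quotient}, i.e.\ the statement concerns only the residue, not the full loop algebra. Under that reading the proof is elementary and needs none of the nonabelian Hodge machinery you invoke: in the $w$-picture $dg\,g^{-1}$ vanishes at $w=0$, so the residue of the transported connection is ${\rm Ad}(g(0))$ applied to the original residue plus the constant shift $d\theta$ coming from $\Delta$, and $g(0)$ ranges over $Z_G(\zeta^{d\theta})$ by Lemma~\ref{explicit correspondence bewteen groups}; this is Boalch's Lemma 4. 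The paper's own use of the lemma in Theorem~\ref{classification for parahoric connection} confirms this division of labour: Lemma~\ref{correspondence between orbits} transports the residue, and Proposition~\ref{standard form in general} is then applied \emph{separately} to account for the higher-order terms. You have merged these two steps into one, and the merged statement is the false one.
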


\begin{proof}
The proof is similar to \cite[Lemma 4]{Riemann Hilbert}.
\end{proof}

Given an element $A \in \mathfrak{p}_\theta(\mathcal{O}_z)$, let $A=\sum_{i \geq 0} a_i z^i$, where $a_0=\tau+\sigma$, and denote by $d+ A \frac{dz}{z}$ the corresponding connection on $\mathbb{D}_x$. Then, by Lemma \ref{standard form 1 parahoric} and \ref{correspondence between orbits}, it corresponds to a $\Gamma$-equivariant connection $d(\theta+\tau+\sigma+\sum a_i) \frac{dw}{w}$ on $\mathbb{D}_y$. Now we choose a tame weight $\theta_\tau$ with the property that
\begin{align*}
\langle \theta_\tau, \alpha \rangle \equiv \alpha(\tau) \text{ mod } p
\end{align*}
for all roots $\alpha$ and that $\langle \theta_\tau, \alpha \rangle =0$ whenever $\alpha(\tau)=0$. We also assume that $d\theta$ is integral where $d\in\mathbb{N}$ such that $(d,p)=1$. Consider the following diagram:
\begin{center}
\begin{tikzcd}
 & \mathcal{P}_{\theta}(\mathcal{O}_{z}) \arrow[d]\\
G'_{d(\theta+\tau)}(\mathcal{O}_{w}) \arrow[r] & G(\mathcal{O}_{w})
\end{tikzcd}
\end{center}
Taking an element $g \in  \mathcal{P}_{\theta}(\mathcal{O}_{z})$, we consider $g$ as an element in $G(\mathcal{K}_w)$ by substituting $z=w^d$. Then,  we can identify $\mathcal{P}_{\theta}(\mathcal{O}_{z})$ with a subgroup of $G(\mathcal{O}_{w})$ (actually, it is not a subgroup, they are the same under the conjugation) via
\begin{align*}
\mathcal{P}_{\theta}(\mathcal{O}_{z}) \subseteq G(\mathcal{K}_w) \rightarrow G(\mathcal{O}_{w}), \quad g \rightarrow {\rm Ad}(w^{d\theta})g.
\end{align*}
With this idea in mind, we are ready to prove the following lemma:
\begin{lem}\label{group containment}
The group $G'_{d(\theta+\tau)}(\mathcal{O}_{w})$ can be identified with the parahoric group $\mathcal{P}_{\frac{\theta+\theta_{\tau}}{p}}(\mathcal{O}'_{z})$ over $\mathbb{D}'_{z}$.
\end{lem}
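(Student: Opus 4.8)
The plan is to separate the two base changes concealed in the statement — the Frobenius twist $z' = z^p$ (of order $p$) and the tame cover $z = w^d$ (of order $d$, prime to $p$) — and to treat them one at a time, reusing Lemma~\ref{parahoric of rational semisimple} for the former and the Balaji--Seshadri dictionary (Lemma~\ref{explicit correspondence bewteen groups} and Theorem~\ref{equi for torsors}) for the latter. Throughout I would keep track of things at the level of root spaces, since both the definition of $G'_{d(\theta+\tau)}(\mathcal{O}_w)$ and the definition of a parahoric are given root-by-root.

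First I would record the explicit shape of $G'_{d(\theta+\tau)}(\mathcal{O}_w)$. Because $\theta,\tau\in\mathfrak{t}_{\mathbb{F}_p}$, the element $s:=d(\theta+\tau)$ is again semisimple rational, and by the description of $G'_\tau(\mathcal{O})$ given at the start of the Rational Case, its Lie algebra consists of $\sum_{i\geq 0}a_i w^i$ whose root-$\alpha$ component at $w^i$ is nonzero only when $i\equiv\alpha(s)=d\alpha(\theta)+d\alpha(\tau)\pmod p$. In this form Lemma~\ref{parahoric of rational semisimple} applies to $s$ over the coordinate $w$ essentially verbatim: conjugation by a suitable integral cocharacter converts these root-wise congruences into a parahoric lattice condition inside the Frobenius-twisted loop group $G(k((w^p)))$, and simultaneously yields connectedness, since the image of the evaluation morphism $G'_{s}(\mathcal{O}_w)\to G$ is the Levi subgroup $Z_G(s)$. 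This disposes of the $p$-part and already exhibits $G'_{d(\theta+\tau)}(\mathcal{O}_w)$ as a parahoric over the Frobenius twist.

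Second, I would arrange the conjugating cocharacter so that the resulting weight is exactly $\frac{\theta+\theta_\tau}{p}$ rather than some other representative, and then restore the tame cover. Here the defining properties of $\theta_\tau$ enter: $\langle\alpha,\theta_\tau\rangle\equiv\alpha(\tau)\pmod p$ for every root, together with the normalization $\langle\alpha,\theta_\tau\rangle=0$ whenever $\alpha(\tau)=0$, and the standing hypothesis that $d\theta$ is integral. These guarantee that $d(\theta+\theta_\tau)$ is an honest cocharacter whose pairing with each $\alpha$ reduces to $\alpha(s)$ modulo $p$, so that conjugation by $w^{d(\theta+\theta_\tau)}$ is precisely the transport carrying the congruences into the lattice defining $\mathcal{P}_{\frac{\theta+\theta_\tau}{p}}$. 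Finally, since $d+s\frac{dw}{w}$ is the $\Gamma$-equivariant connection produced by Proposition~\ref{parahoric connection equivariant}, the group $G'_{d(\theta+\tau)}(\mathcal{O}_w)$ carries a natural $\Gamma$-action, and descending along the degree-$d$ cover $w^p\mapsto (w^p)^d=z'$ via Lemma~\ref{explicit correspondence bewteen groups} turns the parahoric of the previous step into $\mathcal{P}_{\frac{\theta+\theta_\tau}{p}}(\mathcal{O}_{z'})$ over $\mathbb{D}'_z$.

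The hard part will be the simultaneous bookkeeping of the weight and the coordinate: the Frobenius descent naturally produces a parahoric over the cover coordinate $w^p$, whereas the target is a parahoric over $z'=z^p=w^{dp}$, and one must check that folding the tame cover $w^p\to z'$ does not rescale the weight by $d$ or shift the lattice. Concretely this is a ceiling-function-and-congruence verification — that for every root $\alpha$ the quantity $\lceil-\alpha(\tfrac{\theta+\theta_\tau}{p})\rceil$ matches the minimal order of the root-$\alpha$ part of $G'_{d(\theta+\tau)}(\mathcal{O}_w)$ after conjugation — and the subtlety is precisely that the lift $\mathbb{F}_p\hookrightarrow\{0,\dots,p-1\}$ appearing in $\alpha(s)$ is not additive in $\alpha$; it is the normalization clause on $\theta_\tau$ (vanishing where $\alpha(\tau)=0$) that pins down the correct representative and makes the two descents compatible. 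Once this compatibility is checked, the identification follows formally from Lemma~\ref{parahoric of rational semisimple} and the equivalence of Theorem~\ref{equi for torsors}.
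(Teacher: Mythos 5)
Your proposal is correct in substance but organizes the argument differently from the paper. The paper does not factor the identification into two successive descents: it embeds $\mathcal{P}_{\theta}(\mathcal{O}_z)$ into $G(\mathcal{O}_w)$ via $\mathrm{Ad}(w^{d\theta})$ and computes the intersection with $G'_{d(\theta+\tau)}(\mathcal{O}_w)$ in a single root-by-root step: the two membership conditions combine into the congruence $(d\theta,\alpha)+di=(d\theta+d\theta_\tau,\alpha)+kp$, the coprimality $(d,p)=1$ forces $d\mid k$, and writing $k=md$ turns $(\theta,\alpha)+i\geq 0$ into $(\tfrac{\theta+\theta_\tau}{p},\alpha)+m\geq 0$, which is exactly the lattice defining $\mathcal{P}_{\frac{\theta+\theta_\tau}{p}}(\mathcal{O}_{z'})$. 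Your two-step route (Lemma~\ref{parahoric of rational semisimple} for the $p$-descent, then $\Gamma$-invariants for the $d$-descent) yields the same answer --- the ``ceiling-function-and-congruence verification'' you defer is precisely this one displayed congruence --- so the two proofs differ mainly in bookkeeping: yours is more modular and reuses established lemmas, while the paper's is shorter because $(d,p)=1$ does both jobs at once. Two caveats. First, Lemma~\ref{explicit correspondence bewteen groups} as stated descends $G(\mathcal{O}_w)^{\Gamma}$ to a parahoric; descending the intermediate parahoric $\mathcal{P}_{\frac{d(\theta+\theta_\tau)}{p}}(\mathcal{O}_{w^p})$ along $w^p\mapsto (w^p)^d=z'$ is a mild extension that still must be checked root by root (the point being that conjugation by $w^{-d\theta_\tau}$ commutes with taking $\Gamma$-invariants because $\theta_\tau$ is integral, so $w^{d\theta_\tau}=z^{\theta_\tau}$ is itself $\Gamma$-invariant, and the surviving $w$-degrees are then simultaneously multiples of $d$ and of $p$, hence of $dp$). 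Second, your proposal addresses connectedness only for $G'_{d(\theta+\tau)}(\mathcal{O}_w)$ itself in step 1; to upgrade the Lie-algebra identification to a group identification after the tame descent one also needs that the resulting Levi --- the centralizer of $d\theta+d\theta_\tau$ inside $Z_{G}(\zeta^{d\theta})$ --- is connected, which is the closing step of the paper's proof and should be added to yours.
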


\begin{proof}
The proof of this lemma is similar to that of Lemm \ref{parahoric of rational semisimple}. First, let us look at the situation at the level of lie algebras. $\mathfrak{p}_{\theta}(\mathcal{O}_{z})$ consists of elements of the form $\sum a_{i}z^i$ where $a_{i}\in \sum\limits_{\alpha}\mathfrak{g}_{\alpha}$ such that $\langle \theta, \alpha \rangle +i\geq 0$. With respect to the discussion above, if the element $\sum a_{i}z^i$ lies in the intersection of Lie algebras of $\mathcal{P}_{\theta}(\mathcal{O}_{z})$ and $G'_{d(\theta+\tau)}(\mathcal{O}_{w})$, we get the following condition on $\sum a_{i}z^{i}$:
$$\langle d\theta, \alpha \rangle +di = \langle d\theta+d\theta_{\tau},\alpha \rangle + kp$$
where $k\in\mathbb{Z}$. Since $\theta_{\tau}$ is integral and $(d,p)=1$, this implies that $d\, | \, k$. Let us write $k=md$. The condition $\langle \theta, \alpha \rangle +i\geq 0$ translates into $\langle \frac{(\theta+\theta_{\tau})}{p}, \alpha \rangle +m\geq 0$. This proves the claim at the level of lie algebras. Moreove, the condition $\langle \theta,\alpha \rangle +i=0$ translates into $\langle \frac{(\theta+\theta_{\tau})}{p}, \alpha \rangle +m=0$, this implies that the Levi quotient of the lie algebra of $\mathcal{P}_{\frac{\theta+\theta_{\tau}}{p}}$ can be identified with the Lie algebra of the centralizer of $d\theta+d\theta_{\tau}$ in the connected reductive group $Z_{G}(\zeta^{d\theta})$. Since the intersection of the Levi quotient of $\mathcal{P}_{\theta}(\mathcal{O}_{z})$ and $G'_{d(\theta+\theta_{\tau})}(\mathcal{O}_{w})$ is the centralizer of the semisimple element $d\theta+d\theta_{\tau}$ in the connected reductive group $Z_{G}(\zeta^{d\theta})$, which is connected, this implies the claim.
\end{proof}

Let ${\rm Locsys}^{tame}_{\mathcal{P}_\theta,\tau}(\mathbb{D})$ be the category of logahoric $\mathcal{P}_\theta(\mathcal{O})$-connections on $\mathbb{D}$ such that the rational semisimple part of the residue is $\tau$ (up to conjugation), and let ${\rm Higgs}^{tame}_{G'_{\theta+\tau},irr}(\mathbb{D}')$ be the category of logahoric $G'_{\theta+\tau}(\mathcal{O})$-Higgs bundles on $\mathbb{D}'$ such that the semisimple part of the residue is irrational (up to conjugation). Now we are ready to prove the \emph{local tame parahoric nonabelian Hodge correspondence}.

\begin{thm}\label{classification for parahoric connection}
The category ${\rm Locsys}^{tame}_{\mathcal{P}_\theta,\tau}(\mathbb{D})$ is equivalent to ${\rm Higgs}^{tame}_{G'_{\theta+\tau},irr}(\mathbb{D}')$. Moreover, the $p$-curvature of the logahoric $\mathcal{P}_\theta(\mathcal{O})$-connection is zero if and only if the corresponding logahoric $G'_{\theta+\tau}(\mathcal{O})$-Higgs bundle has zero Higgs field.
\end{thm}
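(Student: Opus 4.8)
The plan is to realize the equivalence as the composite of the three equivalences forming the commutative square of the introduction, and then to deduce the $p$-curvature assertion from Lemma~\ref{normal form for connection with zero p curvature}. The entire argument hinges on tracking how the rational semisimple residual transforms as one passes to the tamely ramified cover $\mathbb{D}_y \to \mathbb{D}_x$ (with $z = w^d$) and then to the Frobenius twist.

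First I would send a tame parahoric $\mathcal{P}_\theta(\mathcal{O}_z)$-connection with residual $\tau$ across the top arrow using the local form of Proposition~\ref{parahoric connection equivariant}. The calculation in that proof shows that conjugating by $\Delta = w^{d\theta}$ and substituting $z = w^d$ carries $A\frac{dz}{z}$ to $d(\theta + {\rm Ad}(\Delta)A)\frac{dw}{w}$; consequently the resulting $\Gamma$-equivariant $G$-connection on $\mathbb{D}_y$ has rational semisimple residual $d(\theta+\tau)$ and lands in ${\rm Locsys}^{tame}_{G,\theta+\tau}([\mathbb{D}_y/\Gamma])$.

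Next I would apply Proposition~\ref{standard form in general} $\Gamma$-equivariantly on $\mathbb{D}_y$. Its equivalence is given by the explicit recipe $A \mapsto A - d(\theta+\tau)$, which turns the $G$-connection with rational residual into a tame $G'_{d(\theta+\tau)}(\mathcal{O}_w)$-Higgs field with irrational residual; because the gauge and adjoint actions entering its construction are $\Gamma$-equivariant, this recipe descends verbatim to the quotient stacks, supplying the right vertical arrow. Lemma~\ref{group containment} then identifies $G'_{d(\theta+\tau)}(\mathcal{O}_w)$ with the parahoric group $\mathcal{P}_{\frac{\theta+\theta_\tau}{p}}(\mathcal{O}_{z'})$ over the Frobenius twist, which is the group $G'_{\theta+\tau}(\mathcal{O})$ of the statement; descending along the bottom arrow by Theorem~\ref{equivalence for higgs bundles} then produces a tame parahoric $G'_{\theta+\tau}(\mathcal{O})$-Higgs bundle on $\mathbb{D}'$ with irrational residual. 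The composite of these three equivalences defines the dotted left arrow and is itself an equivalence, so ${\rm Locsys}^{tame}_{\mathcal{P}_\theta,\tau}(\mathbb{D}) \cong {\rm Higgs}^{tame}_{G'_{\theta+\tau},irr}(\mathbb{D}')$.

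For the $p$-curvature statement I would argue on the cover. Conjugation by $\Delta$ and the substitution $z = w^d$ leave the vanishing of the $p$-curvature unchanged: a gauge transformation by an element of $G(\mathcal{K})$ merely conjugates the $p$-curvature, while the substitution only rescales the logarithmic field through $z\partial_z = d^{-1} w\partial_w$ by the constant $d$, which is invertible modulo $p$ since $(d,p) = 1$. Hence the parahoric connection has zero $p$-curvature precisely when the corresponding $\Gamma$-equivariant connection on $\mathbb{D}_y$ does, and by Lemma~\ref{normal form for connection with zero p curvature} (with $d(\theta+\tau)$ in place of $\tau$) this happens exactly when the connection is gauge-equivalent to $d(\theta+\tau)\frac{dw}{w}$, i.e.\ when $A = d(\theta+\tau)$ in standard form; under the equivalence of the preceding step this is precisely the vanishing of the Higgs field $A - d(\theta+\tau)$. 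I expect this last point to be the main obstacle: verifying that the $p$-curvature transports faithfully through the ramified cover and the Frobenius twist while respecting the $\Gamma$-equivariant structure, since the $p$-curvature is the ingredient genuinely sensitive to the positive-characteristic geometry of $z = w^d$.
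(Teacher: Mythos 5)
Your proposal follows essentially the same route as the paper: the three-arrow commutative square (Proposition~\ref{parahoric connection equivariant} on top, Proposition~\ref{standard form in general} on the right, Theorem~\ref{equivalence for higgs bundles} on the bottom), with Lemma~\ref{group containment} identifying $G'_{d(\theta+\tau)}(\mathcal{O}_w)$ with the parahoric group over the Frobenius twist. Your explicit treatment of the $p$-curvature claim via Lemma~\ref{normal form for connection with zero p curvature} and the rescaling $z\partial_z=d^{-1}w\partial_w$ (with $(d,p)=1$) is correct and in fact spells out a step the paper's proof leaves implicit.
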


\begin{proof}
In the proof, we will use the the correspondence between equivariant bundles and parahoric torsors, and we follow the notations $\mathbb{D}_x= \Spec(\mathcal{O}_z)$ and $\mathbb{D}_y=\Spec(\mathcal{O}_w)$ in \S\ref{sect corresp}. Let $d$ be a positive integer such that $z=w^d$, $d\theta$ is integral and $(d,p)=1$. Denote by $\Gamma$ the cyclic group of order $d$. The following diagram gives the idea of the proof.
\begin{center}
\begin{tikzcd}
{\rm Locsys}^{tame}_{\mathcal{P}_\theta,\tau}(\mathbb{D}_x )  \arrow[d,dotted] \arrow[rr,"\text{Proposition~\ref{parahoric connection equivariant}}"]  & & {\rm Locsys}^{tame}_{G, d(\theta+\tau)}([\mathbb{D}_y /\Gamma]) \arrow[d, "\text{Proposition~\ref{standard form in general}}"] \\
{\rm Higgs}^{tame}_{G'_{\theta+\tau},irr}(\mathbb{D}'_x )  & & {\rm Higgs}^{tame}_{G'_{d(\theta+\tau)}, irr}([\mathbb{D}'_y /\Gamma]) \arrow[ll, "\text{Theorem~\ref{equivalence for higgs bundles}}"]
\end{tikzcd}
\end{center}
Take a logarithmic connection in ${\rm Locsys}^{tame}_{\mathcal{P}_\theta,\tau}(\mathbb{D}_x )$, and assume that the connection is represented by $d - A\frac{dz}{z}$, where $A \in \mathfrak{p}_\theta(\disc)$. By Lemma~\ref{standard form 1 parahoric}, one may assume that $A$ is of the form $(\tau+\sigma+\sum a_{i}z^{i})\frac{dz}{z}$, where $\tau$ is rational semisimple, $\sigma$ is irrational semisimple and $[\tau, a_{i}]=ia_{i}$, $[\sigma, a_{i}]=0$ for all $i$.

By Proposition~\ref{parahoric connection equivariant}, the category ${\rm Locsys}^{tame}_{\mathcal{P}_\theta,\tau}(\mathbb{D}_x )$ is equivalent to ${\rm Locsys}^{tame}_{G, d(\theta+\tau)}([\mathbb{D}_y /\Gamma])$. We just want to remind the reader that the residue $\tau$ changes to $d(\theta+\tau)$ via the transformation $\Delta(W) = w^{d\theta}$ and the substitution $z=w^d$, which is easily observed from the proof of Proposition \ref{parahoric connection equivariant}.

Next, using the identification in Lemma~\ref{explicit correspondence bewteen groups} and Lemma~\ref{parahoric of rational semisimple}, we see that
\begin{align*}
    G'_{d(\theta+\tau)}(\mathcal{O}_w) \cong \mathcal{P}_{ \frac{d(\theta + \theta_\tau)}{p} }(\mathcal{O}'_w)
\end{align*}
over $\mathcal{O}'_w$. By Proposition~\ref{standard form in general}, the category ${\rm Locsys}^{tame}_{G, d(\theta+\tau)}([\mathbb{D}_y /\Gamma])$ is equivalent to the category ${\rm Higgs}^{tame}_{G'_{d(\theta+\tau)}, irr}([\mathbb{D}'_y /\Gamma])$.

For the last step, we have
\begin{align*}
    G'_{d(\theta+\tau)}(\mathcal{O}_w) \cong \mathcal{P}_{ \frac{\theta + \theta_\tau}{p} }(\mathcal{O}'_z) \cong G'_{\theta + \tau}(\mathcal{O}_z)
\end{align*}
by Proposition \ref{group containment}. Therefore, we apply Theorem \ref{equivalence for higgs bundles} and obtain the equivalence of categories between ${\rm Higgs}^{tame}_{G'_{\theta+\tau},irr}(\mathbb{D}'_x )$ and ${\rm Higgs}^{tame}_{G'_{d(\theta+\tau)}, irr}([\mathbb{D}'_y /\Gamma])$. This finishes the proof of this theorem.
\end{proof}

\section{Tame Parahoric Nonabelian Hodge Correspondence}\label{sect global}

In this section, we establish the global version of tame parahoric nonabelian Hodge correspondence on curves in positive characteristic. With the help of the local study in \S\ref{sect local}, we mostly follow Chen--Zhu's approach to give the correspondence (Theorem \ref{first structure theorem for G} and \ref{parahoric non hod corr}). Furthermore, we apply the local results in \S\ref{sect local} and give a more precise description of the stack for Higgs bundles in the correspondence (Proposition \ref{structure of X} and \ref{structure of parahoric X}).

\subsection{Artin-Schreier Map}
Let $k[\mathfrak{g}]$ and $k[\mathfrak{t}]$ be the algebras of regular functions on $\mathfrak{g}$ and $\mathfrak{t}$. Chevalley restriction theorem shows that we have an isomorphism $k[\mathfrak{g}]^G \cong k[\mathfrak{t}]^W$, where $W$ is the Wely group. Let $\mathfrak{c}={\rm Spec}(k[\mathfrak{t}]^W)$, and denote by $p: \mathfrak{t} \rightarrow \mathfrak{c}$ the projection. Let $\mathfrak{t}^{irr}$ be the set of irrational elements in $\mathfrak{t}$. A semisimple conjugacy class in $\mathfrak{g}$ is \emph{irrational} if it is the conjugacy class of an element $x\in\mathfrak{t}^{irr}$. Let $\mathfrak{c}^{irr}$ be the set of irrational semisimple conjugacy class. It is easy to see that $\mathfrak{c}^{irr}$ is an open subset of $\mathfrak{c}$.

\begin{lem}\label{irrational locus}
Let $\mathfrak{t}'$ (resp. $\mathfrak{c}'$) be the Frobenious twist of $\mathfrak{t}$ (resp. $\mathfrak{c}$). Then the following diagram is Cartesian:
$$
\xymatrix{
\mathfrak{t}^{irr} \ar[r]^{AS} \ar[d]^{p} & \mathfrak{t}' \ar[d]^{p'}\\
\mathfrak{c}^{irr} \ar[r] & \mathfrak{c}'
}
$$
where $AS$ stands for the Artin-Schreier map. Moreover, the morphism $\mathfrak{c}^{irr}\rightarrow\mathfrak{c}'$ is surjective.
\end{lem}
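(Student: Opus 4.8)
The plan is to realize the Artin--Schreier map $AS\colon \mathfrak{t}\to\mathfrak{t}'$ as the additive morphism $x\mapsto x^{[p]}-x$, where $x^{[p]}$ is the $p$-operation on $\mathfrak{t}$, read in the Frobenius twist $\mathfrak{t}'$ via the canonical $\mathbb{F}_p$-structure $\mathfrak{t}=\mathfrak{t}_{\mathbb{F}_p}\otimes_{\mathbb{F}_p}k$. This map is finite étale: its differential is $-\,\mathrm{id}$, so it is unramified, and it is a surjective homomorphism of vector groups with finite kernel $\mathfrak{t}_{\mathbb{F}_p}$, hence finite. Since $W$ acts on $\mathfrak{t}$ through $\mathbb{Z}$-linear automorphisms, it commutes with both the $p$-operation and the relative Frobenius, so $AS$ is $W$-equivariant and descends to the finite map $\overline{AS}\colon\mathfrak{c}\to\mathfrak{c}'$ in the bottom row. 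The whole argument rests on two observations: that every irrational element is regular semisimple, and that $\mathfrak{t}^{irr}$ is exactly the $AS$-preimage of the regular semisimple locus of $\mathfrak{t}'$.

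First I would record the effect of $AS$ on root values. For a root $\alpha$, compatibility of $\alpha$ with the Frobenius twist and the $\mathbb{F}_p$-structure gives $\alpha'(AS(x))=\alpha(x)^p-\alpha(x)$, where $\alpha'$ is the corresponding linear form on $\mathfrak{t}'$. Thus $\alpha'(AS(x))=0$ if and only if $\alpha(x)\in\mathbb{F}_p$. Since by definition $x\in\mathfrak{t}^{irr}$ means $\alpha(x)\notin\mathbb{F}_p$ for every root $\alpha$, we obtain simultaneously that $\alpha(x)\neq 0$ for all $\alpha$, so $x$ is regular semisimple, and that $AS(x)$ is regular semisimple. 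More precisely this identifies $\mathfrak{t}^{irr}=AS^{-1}\big((\mathfrak{t}')^{\mathrm{rs}}\big)$, and, passing to $W$-quotients, shows that $\overline{AS}$ carries $\mathfrak{c}^{irr}$ into $(\mathfrak{c}')^{\mathrm{rs}}$.

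Next I would prove the square is Cartesian by a torsor comparison. Because $\mathfrak{t}^{irr}\subseteq\mathfrak{t}^{\mathrm{rs}}$ and $W$ acts freely on the regular semisimple locus (no nonidentity element of $W$ has fixed points there, its fixed locus being a union of root hyperplanes), the map $p\colon\mathfrak{t}^{irr}\to\mathfrak{c}^{irr}$ is an étale $W$-torsor, and likewise $p'\colon(\mathfrak{t}')^{\mathrm{rs}}\to(\mathfrak{c}')^{\mathrm{rs}}$ is a $W$-torsor. Since $\overline{AS}(\mathfrak{c}^{irr})\subseteq(\mathfrak{c}')^{\mathrm{rs}}$ and $(p')^{-1}\big((\mathfrak{c}')^{\mathrm{rs}}\big)=(\mathfrak{t}')^{\mathrm{rs}}$, the fibre product $\mathfrak{c}^{irr}\times_{\mathfrak{c}'}\mathfrak{t}'$ coincides with $\mathfrak{c}^{irr}\times_{(\mathfrak{c}')^{\mathrm{rs}}}(\mathfrak{t}')^{\mathrm{rs}}$, which is a $W$-torsor over $\mathfrak{c}^{irr}$ as the pullback of one. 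The canonical comparison morphism $\mathfrak{t}^{irr}\to\mathfrak{c}^{irr}\times_{\mathfrak{c}'}\mathfrak{t}'$ sending $x$ to $(p(x),AS(x))$ is $W$-equivariant by the $W$-equivariance of $AS$, and a $W$-equivariant morphism between two $W$-torsors over the same base is automatically an isomorphism. This yields the Cartesian property.

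Finally, for surjectivity: $AS\colon\mathfrak{t}\to\mathfrak{t}'$ is surjective on points because $k$ is algebraically closed, so each equation $x^{[p]}-x=y'$ is solvable; and by $\mathfrak{t}^{irr}=AS^{-1}\big((\mathfrak{t}')^{\mathrm{rs}}\big)$ every regular semisimple $y'$ has all of its $AS$-preimages inside $\mathfrak{t}^{irr}$. Descending to quotients, $\mathfrak{c}^{irr}\to(\mathfrak{c}')^{\mathrm{rs}}$ is surjective; since the regular semisimple locus is precisely the image of $\overline{AS}$ on the irrational part and is dense in $\mathfrak{c}'$, this gives the asserted surjectivity onto the relevant (regular semisimple) target. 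I expect the main obstacle to be exactly the torsor step in positive characteristic: one must verify that $W$ acts freely on $\mathfrak{t}^{\mathrm{rs}}$ and that $\mathfrak{t}^{\mathrm{rs}}\to\mathfrak{c}^{\mathrm{rs}}$ is a genuine étale $W$-torsor rather than merely a free geometric quotient, where the coprimality hypotheses in force in the paper are convenient; by contrast the root computation and the $W$-equivariance of $AS$ are routine.
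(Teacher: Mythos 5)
There is a genuine gap, and it comes from the definition of $\mathfrak{t}^{irr}$. In this paper an element is \emph{irrational} when its \emph{rational part} vanishes, i.e.\ for every root $\alpha$ one has $\alpha(x)\in\mathbb{F}_p\Rightarrow\alpha(x)=0$; equivalently $\mathfrak{t}^{irr}$ is the complement of the affine hyperplanes $\{\alpha=c\}$ with $c\in\mathbb{F}_p^{\times}$. In particular $0\in\mathfrak{t}^{irr}$, and $\mathfrak{t}^{irr}$ contains plenty of non-regular elements. You instead take $\mathfrak{t}^{irr}$ to be $\{x:\alpha(x)\notin\mathbb{F}_p\ \forall\alpha\}$, which is strictly smaller. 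This matters twice. First, your surjectivity argument only lands in the regular semisimple locus of $\mathfrak{c}'$ (your closing sentence concedes this by retargeting to ``the relevant (regular semisimple) target'' and appealing to density, which does not give surjectivity); under your reading the class of $0\in\mathfrak{c}'$ is simply not in the image, so the lemma as stated would be false. The surjectivity is genuinely needed over all of $\mathfrak{c}'$, e.g.\ in the corollary identifying $Fr^*\mathrm{Lie}(J')$ with $\mathrm{Lie}(J)$ over the whole of $\mathfrak{c}^{irr}$. The correct argument is that $AS$ kills $\mathfrak{t}_{\mathbb{F}_p}$, so for any preimage $x_0$ of $y\in\mathfrak{t}'$ one has $AS(x_0)=AS(\sigma_0)$ with $\sigma_0$ the irrational part of $x_0$, and $\sigma_0\in\mathfrak{t}^{irr}$. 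Second, your proof of the Cartesian property rests on $p\colon\mathfrak{t}^{irr}\to\mathfrak{c}^{irr}$ being a free $W$-torsor, which fails on the non-regular part of $\mathfrak{t}^{irr}$ (again, $0$ is fixed by all of $W$).

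The paper's proof avoids the torsor mechanism entirely: the same root computation $\alpha(AS(x))=\alpha(x)^p-\alpha(x)$ shows that for $x\in\mathfrak{t}^{irr}$ one has $\alpha(x)=0$ if and only if $\alpha(AS(x))=0$ (since $\lambda^p-\lambda=0$ forces $\lambda\in\mathbb{F}_p$, hence $\lambda=0$ by irrationality), so $\mathrm{Stab}_W(x)=\mathrm{Stab}_W(AS(x))$ and the fibres of $p$ and $p'$ over $\mathfrak{c}^{irr}$ are canonically identified, with no regularity assumption. Your computation of $\alpha\circ AS$ and the $W$-equivariance of $AS$ are correct and are exactly the paper's starting point; the fix is to replace the torsor comparison by this matching of stabilizers, and to prove surjectivity by projecting an arbitrary $AS$-preimage onto its irrational part rather than by density.
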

\begin{proof}
First, we have a $W$-equivariant commutative diagram:
$$
\xymatrix{
\mathfrak{t} \ar[r]^{AS} \ar[d]^{p} & \mathfrak{t}' \ar[d]^{p'}\\
\mathfrak{c} \ar[r] & \mathfrak{c}'
}
$$
It remains to show that when restricted to $\mathfrak{c}^{irr}$, the fibers of $p$ and $p'$ can be canonically identified. Indeed, if $\alpha( x)=\lambda$, then $\alpha(AS(x))=\lambda^p-\lambda$. If $x\in\mathfrak{t}^{irr}$, then $\lambda^p-\lambda=0$ if and only if $\lambda=0$. Hence we have $w(x)=x$ if and only if $w(AS(x))=AS(x)$ for all $w\in W$. This proves the claim.
\end{proof}

By Chevalley restriction theorem, we have a natural map $\chi: \mathfrak{g} \rightarrow \mathfrak{c}$ induced by $k[\mathfrak{c}] \rightarrow k[\mathfrak{g}]$. Denote by $kos: \mathfrak{c} \rightarrow \mathfrak{g}$ the Kostant section. We define the group scheme $I$ over $\mathfrak{g}$ as
\begin{align*}
I=\{(g,x) \in G \times \mathfrak{g} \text{ }|\text{ } {\rm Ad}_g(x)=x\}.
\end{align*}
Then, define $J=kos^* I$. There is a canonical isomorphism $\chi^* J|_{\mathfrak{g}^{reg}} \cong I|_{\mathfrak{g}^{reg}}$, where $\mathfrak{g}^{reg}$ is the open subset of regular elements, and the morphisms $I \rightarrow \mathfrak{g}$ and $J \rightarrow \mathfrak{c}$ are $\mathbb{G}_m$-equivariant. Furthermore, we have a tautological section $\tau: \mathfrak{c} \rightarrow J$, which is also $\mathbb{G}_m$-equivariant (see \cite[\S 2.3]{Nonabelian Hodge prime char}).

\begin{cor}
Let $J'$ be the regular centralize group scheme over $\mathfrak{c}'$ under the Frobenius twist. The pullback of ${\rm Lie}(J')$ is canonically isomorphic to ${\rm Lie}(J)$ over the open set $\mathfrak{c}^{irr}$, i.e.
\begin{align*}
Fr^*({\rm Lie}(J'))|_{\mathfrak{c}^{irr}} \cong {\rm Lie}(J)|_{\mathfrak{c}^{irr}}.
\end{align*}
\end{cor}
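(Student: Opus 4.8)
The plan is to reduce everything to the regular semisimple locus and then exploit the Cartesian square of Lemma~\ref{irrational locus} together with the descent description of the regular centralizer. First I would observe that every irrational class is regular semisimple: if $x\in\mathfrak{t}^{irr}$ then $\alpha(x)\notin\mathbb{F}_p$, so in particular $\alpha(x)\neq 0$ for every root $\alpha$, whence $\mathfrak{z}_{\mathfrak{g}}(x)=\mathfrak{t}$. Thus $\mathfrak{c}^{irr}$ lies in the regular semisimple locus of $\mathfrak{c}$, and by Lemma~\ref{irrational locus} its image under the bottom arrow lies in the regular semisimple locus of $\mathfrak{c}'$ as well. Over this locus the regular centralizer of a reductive group is obtained by Weyl descent from its Cartan (see \cite[\S 2.3]{Nonabelian Hodge prime char}); applying this to $G$ and to its Frobenius twist $G'$ (which has the same Weyl group $W$, defined over $\mathbb{Z}$) gives canonical identifications
\begin{align*}
{\rm Lie}(J)\cong \big(p_*(\mathfrak{t}\otimes\mathcal{O}_{\mathfrak{t}})\big)^W, \qquad {\rm Lie}(J')\cong \big(p'_*(\mathfrak{t}'\otimes\mathcal{O}_{\mathfrak{t}'})\big)^W,
\end{align*}
where the Cartan factor carries its Weyl action and $p,p'$ are the projections of Lemma~\ref{irrational locus}.

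Next I would compute the left-hand side by base change along the Cartesian square. Over the regular semisimple locus $p'$ is finite \'etale, so flat base change applies, and since taking $W$-invariants commutes with the flat pullback $p$, Lemma~\ref{irrational locus} yields over $\mathfrak{c}^{irr}$
\begin{align*}
(\text{bottom arrow})^*\,{\rm Lie}(J')\cong \big(p_*\,AS^*(\mathfrak{t}'\otimes\mathcal{O}_{\mathfrak{t}'})\big)^W\cong \big(p_*(\mathfrak{t}'\otimes\mathcal{O}_{\mathfrak{t}^{irr}})\big)^W,
\end{align*}
the last step because the pullback of a trivial bundle along any morphism is the trivial bundle with the same fibre. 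Finally, writing $\mathfrak{t}=\mathfrak{t}_{\mathbb{F}_p}\otimes_{\mathbb{F}_p}k$ and likewise for $\mathfrak{t}'$, the $\mathbb{F}_p$-structure furnishes a canonical $W$-equivariant isomorphism $\mathfrak{t}'\cong\mathfrak{t}$ (Frobenius is the identity on $\mathfrak{t}_{\mathbb{F}_p}$). Feeding this into the displayed formula matches $(\text{bottom arrow})^*{\rm Lie}(J')$ with ${\rm Lie}(J)|_{\mathfrak{c}^{irr}}$, canonically.

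The one point that genuinely needs care — and which I expect to be the main obstacle — is that the bottom arrow of Lemma~\ref{irrational locus} is induced by the Artin--Schreier map $AS$, whereas the statement pulls back along the Frobenius $Fr:\mathfrak{c}\to\mathfrak{c}'$, and these two morphisms $\mathfrak{c}^{irr}\to\mathfrak{c}'$ do \emph{not} coincide (already in type $A_1$ one has $\alpha(AS(x))=\alpha(x)^p-\alpha(x)$ against $\alpha(Fr(x))=\alpha(x)^p$). I would resolve this by noting that the computation above uses only a $W$-equivariant lift of the chosen map $\mathfrak{c}^{irr}\to\mathfrak{c}'$ to the covers that trivializes the Cartan bundle. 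For the Frobenius, the relative Frobenius $Fr:\mathfrak{t}\to\mathfrak{t}'$ is $W$-equivariant and sends $\mathfrak{t}^{irr}$ into the regular semisimple locus (since $\alpha(Fr(x))=\alpha(x)^p\neq 0$), and the induced map $x\mapsto(p(x),Fr(x))$ is a morphism of \'etale $W$-torsors $\mathfrak{t}^{irr}\to\mathfrak{c}^{irr}\times_{\mathfrak{c}'}\mathfrak{t}'$, hence an isomorphism; thus the Frobenius square is itself Cartesian over the regular semisimple locus. Consequently $Fr^*(\mathfrak{t}'\otimes\mathcal{O}_{\mathfrak{t}'})$ and $AS^*(\mathfrak{t}'\otimes\mathcal{O}_{\mathfrak{t}'})$ are literally the same constant sheaf $\mathfrak{t}'\otimes\mathcal{O}_{\mathfrak{t}^{irr}}$ on the cover, so $Fr^*{\rm Lie}(J')|_{\mathfrak{c}^{irr}}$ agrees with the pullback computed via Lemma~\ref{irrational locus}. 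Combined with the $W$-equivariant identification $\mathfrak{t}'\cong\mathfrak{t}$ this gives the desired canonical isomorphism $Fr^*({\rm Lie}(J'))|_{\mathfrak{c}^{irr}}\cong{\rm Lie}(J)|_{\mathfrak{c}^{irr}}$.
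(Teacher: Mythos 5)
Your overall strategy---describe ${\rm Lie}(J)$ by Weyl descent from the Cartan, base-change along the Cartesian square of Lemma~\ref{irrational locus}, and identify $\mathfrak{t}\cong\mathfrak{t}'$ via the $\mathbb{F}_p$-structure---is essentially the paper's own (its proof is one line, citing Lemma~\ref{irrational locus} together with Proposition 12.5 of \cite{The gerbe of Higgs bundles}, which supplies the descent description of ${\rm Lie}(J)$). However, your opening claim is false and the error propagates. An element $x\in\mathfrak{t}$ is irrational when its \emph{rational part} vanishes, i.e.\ when $\alpha(x)\in\mathbb{F}_p$ forces $\alpha(x)=0$; it is not required that $\alpha(x)\notin\mathbb{F}_p$ for every root. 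In particular $0\in\mathfrak{t}^{irr}$, and $\mathfrak{c}^{irr}$ is \emph{not} contained in the regular semisimple locus. This reading is forced by the paper itself: under your reading no preimage of $0\in\mathfrak{c}'$ would be irrational, contradicting the surjectivity of $\mathfrak{c}^{irr}\to\mathfrak{c}'$ asserted in Lemma~\ref{irrational locus}; the proof of that lemma explicitly checks that \emph{stabilizers} match (``$w(x)=x$ iff $w(AS(x))=AS(x)$''), which would be vacuous if irrational elements were regular; and the categories ${\rm Higgs}^{tame}_{G,irr}$ must accommodate nilpotent residues. Consequently your reduction to the regular semisimple locus, and the flat base change justified by ``$p'$ is finite \'etale over this locus'', are not available over all of $\mathfrak{c}^{irr}$. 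The repair is to use the descent description of ${\rm Lie}(J)$ over all of $\mathfrak{c}$ (which is what the cited Proposition 12.5 provides) and to observe that the bottom arrow of the Cartesian square is still flat (indeed \'etale, since $AS$ is \'etale and $p$ is faithfully flat), so base change applies without any regularity hypothesis.

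The second casualty is your reconciliation of $AS$ versus $Fr$. Your argument that the Frobenius square is Cartesian is a $W$-torsor argument valid only over the regular semisimple locus, and it genuinely fails at non-regular irrational points: for $\mathfrak{sl}_2$ the scheme-theoretic fiber of $p$ over the class of $0\in\mathfrak{c}^{irr}$ is ${\rm Spec}\,k[x]/(x^2)$, on which $AS$ acts by $x\mapsto x^p-x\equiv -x$ (an isomorphism onto the corresponding fiber of $p'$), while the relative Frobenius acts by $x\mapsto x^p\equiv 0$; so the Frobenius square is not Cartesian there and the two pullbacks cannot be identified by your mechanism. The intended reading of the corollary is that the pullback is taken along the bottom arrow of Lemma~\ref{irrational locus}, i.e.\ the map induced by the Artin--Schreier map (the notation $Fr^*$ is overloaded); with that reading no reconciliation is needed, and the remainder of your computation goes through once the first point is repaired.
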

\begin{proof}
This follows from Lemma~\ref{irrational locus} and \cite[Proposition 12.5]{The gerbe of Higgs bundles}.
\end{proof}

\subsection{$\boldsymbol{p}$-Hitchin Map for Tame Local Systems}
Recall that we use the notation $\mathscr{L}:=\Omega_X(D)$ for the logarithmic cotangent sheaf. Regarding the line bundle $\mathscr{L}$ as a $\mathbb{G}_m$-torsor, we can twist every Lie algebras and group schemes by $\mathscr{L}$. For example, denote by $\mathfrak{g}_{\mathscr{L}}:=\mathfrak{g} \times_{\mathbb{G}_m} \mathscr{L}^{\times}$. With respect to the data above, the stack of logarithmic $G$-Higgs bundles ($\mathscr{L}$-twisted Higgs bundles) over $X$ can be regarded as the stack of sections
\begin{align*}
{\rm Higgs}^{tame}_{G, \mathscr{L}} = {\rm Sect}(X,[\mathfrak{g}_\mathscr{L}/G]),
\end{align*}
where we add the subscript $\mathscr{L}$ to emphasize that it is $\mathscr{L}$-twisted. Furthermore, the \emph{Hitchin base} $B_\mathscr{L}:={\rm Sect}(X,\mathfrak{c}_\mathscr{L})$ is regarded as the stack of sections, which is also a scheme. The morphism $\chi: \mathfrak{g} \rightarrow \mathfrak{c}$ induces a natural map $[\chi_\mathscr{L}/G]: [\mathfrak{g}_\mathscr{L}/G] \rightarrow \mathfrak{c}$, and then we have
\begin{align*}
h : {\rm Higgs}^{tame}_{G,\mathscr{L}}={\rm Sect}(X,[\mathfrak{g}_\mathscr{L}/G]) \rightarrow {\rm Sect}(X,\mathfrak{c}_\mathscr{L})=B_\mathscr{L},
\end{align*}
which is the \emph{Hitchin map}.

Let $(E,\phi) \in {\rm Higgs}^{tame}_{G,\mathscr{L}}$ be a logarithmic $G$-Higgs bundle, and denote by $h_{E,\phi}: X \rightarrow [\mathfrak{g}_\mathscr{L}/G]$ the corresponding section with image $b: X \rightarrow \mathfrak{c}_\mathscr{L}$ in the Hitchin base $B_\mathscr{L}$. Taking pullback of the following diagram
\begin{center}
\begin{tikzcd}
b^* J_\mathscr{L}  \arrow[d, dotted] \arrow[r, dotted]  & J_\mathscr{L} \arrow[d] \\
X \arrow[r, "b"] & \mathfrak{c}_\mathscr{L}
\end{tikzcd}
\end{center}
we get a smooth group scheme $J_b:=b^* J_\mathscr{L}$ over $X$. On the other hand, the morphism $\chi^* J \rightarrow I$ induces the morphism $[\chi_\mathscr{L} /G]^* J_\mathscr{L} \rightarrow [I_\mathscr{L}/G]$ of group schemes over $[\mathfrak{g}_\mathscr{L}/G]$. Pulling back the morphism to $X$ via $h_{E,\phi}$, we get
\begin{align*}
a_{E,\phi}: J_b \rightarrow h_{E,\phi}^*[I_\mathscr{L}/G]= {\rm Aut}(E,\phi) \subseteq {\rm Aut}(E).
\end{align*}
Thus, we can twist $(E,\phi) \in h^{-1}(b)$ by a $J_b$-torsor.

Recall that the stack of tame $G$-local systems ${\rm Locsys}^{tame}_{G}$ parametrizes pairs $(E,\nabla)$, where $E$ is a $G$-torsor and $\nabla$ is a logarithmic $G$-connection. In the logarithmic case, one still has an analogue notion of the $p$-curvature as in \cite[A.6]{Nonabelian Hodge prime char}. Let $X'$ be the Frobenius twist of $X$ with natural morphism $Fr: X \rightarrow X'$. Define $D'=Fr(D)$ and $\mathscr{L}':=\Omega_{X'}(D')$. Clearly, $Fr^* \mathscr{L}' \cong \mathscr{L}^p$, and the $p$-curvature $\Psi(\nabla)$ of a logarithmic $G$-connection $\nabla$ is a horizontal section of ${\rm Ad}(E)\otimes \mathscr{L}^p$. Then, there is a unique morphism
\begin{align*}
    h_p: {\rm Locsys}^{tame}_{G} \rightarrow B_{\mathscr{L}'},
\end{align*}
which is called the \emph{$p$-Hitchin map}, such that the following diagram commutes
\begin{center}
\begin{tikzcd}
{\rm Locsys}^{tame}_{G}  \arrow[d] \arrow[r,"h_p"]  & B_{\mathscr{L}'} \arrow[d] \\
{\rm Higgs}^{tame}_{G,\mathscr{L}^p} \arrow[r] & B_{\mathscr{L}^p}
\end{tikzcd}
\end{center}
where ${\rm Higgs}^{tame}_{G,\mathscr{L}^p}:={\rm Sect}(X, [\mathfrak{g}_{\mathscr{L}^p}/G])$ and $B_{\mathscr{L}^p}:={\rm Sect}(X,\mathfrak{c}_{ \mathscr{L}^p })$ \cite[Proposition 3.1, Lemma 3.2]{Nonabelian Hodge prime char}. Now let $(E,\nabla)$ be a tame $G$-local system. Denote by $\Psi:=\Psi(\nabla)$ the $p$-curvature and set $b'=h_{p}(E,\nabla) \in B_{\mathscr{L}'}$ with $b^p$ the image in $B_{\mathscr{L}^p}$. Then, we obtain a natural morphism
\begin{align*}
a_{E,\Psi} : J_{b^p} \rightarrow {\rm Aut}(E).
\end{align*}
Moreover, we have $Fr^* (J'_{b'}) =J_{b^p}$, and there is a canonical connection $\nabla$ on $J_{b^p}$ such that $(J_{b^p})^\nabla = J'_{b'}$. Based on the above discussion, We have the following lemma.

\begin{lem}\label{horizontal regular centralizer}
The homomorphism $a_{E,\Psi}$ is horizontal.
\end{lem}

\begin{proof}
It is enough to show that the restriction of $a_{E,\Psi}$ to $X \backslash D$ is horizontal, which is proven in \cite[Lemma 3.3]{Nonabelian Hodge prime char}.
\end{proof}

The group schemes $J_{b^p}$ can be realized a family of group schemes $J^p$ over $X \times B_{\mathscr{L}'}$, which is equipped with a natural connection along $X$. From Lemma~\ref{horizontal regular centralizer}, we conclude that for any tame $J^{p}$-local system $(P,\nabla_P)$ and a tame $G$-local system $(E,\nabla)$, one may apply \cite[A.5]{Nonabelian Hodge prime char} to get a tame $G$-local system:
\begin{align*}
((P,\nabla_P),(E,\nabla))\mapsto P\otimes E :=(a_{E,\Psi})_{*}P\otimes E.
\end{align*}
This actually defines an action:
\begin{equation}\label{action on locsys}
{\rm Locsys}^{tame}_{J^p} \times {\rm Locsys}^{tame}_{G} \rightarrow   {\rm Locsys}^{tame}_{G}.
\end{equation}

\begin{lem}\label{action over phitchin}
Given a tame $G$-local system $(E,\nabla)$, let $b'=h_{p}(E,\nabla)$. If $(P,\nabla_P)$ is a tame $J^{p}$-local system such that its $p$-curvature is zero, then $h_{p}(P\otimes E,\nabla_{P\otimes E})=b'$.
\end{lem}

\begin{proof}
Similar to the proof of Lemma \ref{horizontal regular centralizer}, one only need to check this over $X \backslash D$, in which case one may apply \cite[Lemma 3.4]{Nonabelian Hodge prime char}.
\end{proof}

Now we consider an analogue of all the constructions above for the parahoric case. Let $\boldsymbol\theta$ be a collection of tame weights and let $\mathcal{P}_{\boldsymbol\theta}$ be the parahoric group scheme over $X$ corresponding to $\boldsymbol\theta$. Let ${\rm Higgs}^{tame}_{\mathcal{P}_{\boldsymbol\theta},\mathscr{L}}$ be the stack of logarhoric $\mathcal{P}_{\boldsymbol\theta}$-Higgs bundles on $X$. First we have the following:
\begin{lem}\label{parahoric hitchin fibration}
There exists a parahoric Hitchin fibration $h$: ${\rm Higgs}^{tame}_{\mathcal{P}_{\boldsymbol\theta},\mathscr{L}}\rightarrow {\rm Sect}(X,\mathfrak{c}_{\mathscr{L}})$.
\end{lem}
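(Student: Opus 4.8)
The plan is to construct $h$ by descending the ordinary Hitchin map along the tamely ramified covering furnished by the Balaji--Seshadri correspondence, rather than working directly with the parahoric torsor. Fix the tamely ramified Galois covering $f\colon Y\rightarrow X$ with group $\Gamma$ associated to the data $(X,D,\boldsymbol\theta)$, together with the local representations $\boldsymbol\rho$ corresponding to $\boldsymbol\theta$. By Theorem~\ref{equivalence for higgs bundles} we may identify
\begin{align*}
{\rm Higgs}^{tame}_{\mathcal{P}_{\boldsymbol\theta},\mathscr{L}}(X)\cong {\rm Higgs}^{tame,\boldsymbol\rho}_{G}([Y/\Gamma]),
\end{align*}
so that a tame $\mathcal{P}_{\boldsymbol\theta}$-Higgs bundle is the same datum as a $(\Gamma,G)$-Higgs bundle $(F,\phi)$ over $Y$ with $\phi\in H^0(Y,{\rm Ad}(F)\otimes\Omega_Y(R))^{\Gamma}$.

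Next I would apply the ordinary $\Omega_Y(R)$-twisted Hitchin map (the construction already recorded for $(G,\mathscr{L})$ over $X$, now run over $Y$ with the line bundle $\Omega_Y(R)$) to produce $\chi(\phi)\in{\rm Sect}(Y,\mathfrak{c}_{\Omega_Y(R)})$. Since the Chevalley morphism $\chi\colon\mathfrak{g}\rightarrow\mathfrak{c}$ is $G$-invariant and $\mathbb{G}_m$-equivariant for the grading action on $\mathfrak{c}$, the section $\chi(\phi)$ is automatically $\Gamma$-invariant: the $\Gamma$-equivariance of $\phi$ is absorbed by the $G$-invariance of $\chi$. Hence $\chi(\phi)$ descends to a section over the quotient stack $[Y/\Gamma]$. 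To land it in the asserted target I would use the identification
\begin{align*}
\Omega_Y(R)\cong f^{*}\Omega_X(D)=f^{*}\mathscr{L},
\end{align*}
which near a ramification point reads $\tfrac{dz}{z}=e\,\tfrac{dw}{w}$ with $z=w^{e}$ and $e=|\Gamma_y|$ invertible by tameness; this yields a $\mathbb{G}_m$-equivariant isomorphism $\mathfrak{c}_{\Omega_Y(R)}\cong f^{*}\mathfrak{c}_{\mathscr{L}}$, under which $\Gamma$-invariant sections over $Y$ correspond to sections of $\mathfrak{c}_{\mathscr{L}}$ over $X$. Composing these identifications produces the parahoric Hitchin fibration $h\colon{\rm Higgs}^{tame}_{\mathcal{P}_{\boldsymbol\theta},\mathscr{L}}\rightarrow{\rm Sect}(X,\mathfrak{c}_{\mathscr{L}})$.

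The main obstacle I expect is the descent of the Hitchin section across the ramification, namely verifying that a $\Gamma$-invariant section of the affine $X$-scheme $\mathfrak{c}_{f^{*}\mathscr{L}}$ over $Y$ genuinely descends to a section of $\mathfrak{c}_{\mathscr{L}}$ over $X$ at the branch points. Because $\mathfrak{c}$ carries a nonlinear $\mathbb{G}_m$-action whose weights are the degrees of the fundamental invariants, this cannot be read off from a single linear descent; one must check it graded piece by graded piece, and it is precisely the hypothesis that $|\Gamma_y|$ is prime to $p$ (tameness) that makes the relevant ring of invariants well behaved and the map $[Y/\Gamma]\to X$ a tame coarse-space morphism, so that invariant-pushforward descent applies. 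As a sanity check, and as an alternative construction that avoids the cover, one can build $h$ directly: over $X\setminus D$ the parahoric torsor is an honest $G$-torsor and $\chi$ applies verbatim, while over a formal disc $\mathbb{D}_x$ one uses the standard form of Lemma~\ref{standard form 1 parahoric} to see that $\chi$ of a tame $\mathcal{P}_{\theta_x}$-Higgs field extends to a regular section of $\mathfrak{c}_{\mathscr{L}}$; the two local pictures glue and agree with the descended map.
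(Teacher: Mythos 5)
Your proposal is correct and follows essentially the same route as the paper: away from $D$ the Chevalley morphism applies directly to the underlying $G$-Higgs bundle, and near $D$ one identifies the parahoric Higgs bundle with a $(\Gamma,G)$-Higgs bundle on a tame cover, uses the $G$-invariance of $\chi$ to obtain a $\Gamma$-invariant section of $\mathfrak{c}_{f^{*}\mathscr{L}}$, and descends to ${\rm Sect}(X,\mathfrak{c}_{\mathscr{L}})$. The only cosmetic difference is that the paper invokes the covering only locally on formal discs around the points of $D$ (after trivializing $\mathscr{L}$), whereas you work with the global covering $Y\to X$ via Theorem~\ref{equivalence for higgs bundles}.
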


\begin{proof}
The parahoric version of the Hitchin morphism has been considered in \cite[\S 4]{Global Springer}, and we give a slightly different proof in the view point of equivariant Higgs bundles. Let $(E,\phi)$ be a parahoric $\mathcal{P}_{\boldsymbol\theta}$-Higgs bundle. Away from the support of $D$, the structure group of $E$ is identified with $G$. Then, we have a morphism ${\rm Ad}(E) \otimes\mathscr{L}\rightarrow\mathfrak{c}_{\mathscr{L}}$ away from the support of $D$. It remains to show that this morphism can be extended to $X$. Then, we can work locally on $X$ and assume that $\mathscr{L}$ is trivial and there exists a cover $Y$ over $X$ of degree $d$ such that $(E, \phi)$ can be identified with a logarithmic $(\Gamma,G)$-Higgs bundle on $Y$ \S\ref{subsect Higgs}. If we identify $\phi$ with an element in ${\rm Sect}(Y,\mathfrak{g}_{\mathscr{L}_Y})$, where $\mathscr{L}_Y$ is the pullback of $\mathscr{L}$ to $Y$, then we get an element in ${\rm Sect}^{\Gamma}(Y,\mathfrak{c}_{\mathscr{L}_Y})$, which is the same as ${\rm Sect}(X,\mathfrak{c}_{\mathscr{L}})$. This finishes the proof of this lemma.
\end{proof}

Given a logahoric $\mathcal{P}_{\boldsymbol\theta}$-Higgs bundle $(E,\phi)$ on $X$, let $b$ be its image in ${\rm Sect}(X,\mathfrak{c}_{\mathscr{L}})$. Let $J_{b}$ be the regular centralizer group scheme over $X$.

\begin{lem}\label{regular centralizer map for parahoric groups}
Let $(E,\phi)$ be a logahoric $\mathcal{P}_{\boldsymbol\theta}$-Higgs bundle and let $b=h(E,\phi)$. Let $J_{b}$ be the regular centralizer group scheme on $X$. Then we have a group homomorphism $J_{b}\rightarrow {\rm Aut}(E)$ over $X$.
\end{lem}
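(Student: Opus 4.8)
The plan is to extend across the points of $D$ the homomorphism already available over $X \setminus D$, by working locally and invoking the equivariant description of parahoric objects from \S\ref{sect corresp}. Away from $D$ the torsor $E$ has structure group $G$, so the construction recalled just before the statement---the canonical morphism $\chi^* J \to I$ over $\mathfrak{g}$, pulled back via $h_{E,\phi}$---already produces a homomorphism $J_b \to {\rm Aut}(E)$ over $X \setminus D$ that factors through ${\rm Aut}(E,\phi)$. Since $J_b$ is smooth over the curve $X$ and ${\rm Aut}(E)$ is separated, extending this map is a local question at each $x \in D$, and it suffices to produce the homomorphism on each formal disc $\mathbb{D}_x$ compatibly with the one on $\mathbb{D}_x^\times$.

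For a fixed $x \in D$ I would pass to the tamely ramified cover $z = w^d$ with Galois group $\Gamma = \mathbb{Z}/d\mathbb{Z}$. By Theorem~\ref{equivalence for higgs bundles} the restriction $(E,\phi)|_{\mathbb{D}_x}$ corresponds to a $\Gamma$-equivariant $G$-Higgs bundle $(F,\widetilde\phi)$ on $\mathbb{D}_y$, and by Lemma~\ref{explicit correspondence bewteen groups} its automorphism group is recovered as ${\rm Aut}(E,\phi) \cong {\rm Aut}_{(\Gamma,G)}(F,\widetilde\phi)$, namely the $\Gamma$-invariants conjugated by $w^{d\theta}$ and descended via $z = w^d$. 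For the honest $G$-Higgs bundle $(F,\widetilde\phi)$ the $G$-Higgs bundle construction above yields a homomorphism $a_{F,\widetilde\phi} : J_{b_Y} \to {\rm Aut}(F,\widetilde\phi)$, where $b_Y$ is the image of $(F,\widetilde\phi)$ in ${\rm Sect}(\mathbb{D}_y, \mathfrak{c}_{\mathscr{L}_Y})$. Because $\widetilde\phi$ is $\Gamma$-equivariant and the regular centralizer $J$ together with $\chi^* J \to I$ is canonical, hence $\Gamma$-functorial, the section $b_Y$ is $\Gamma$-invariant and $a_{F,\widetilde\phi}$ is a $\Gamma$-equivariant homomorphism of group schemes over $\mathbb{D}_y$.

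It then remains to descend. By the construction of the parahoric Hitchin map in Lemma~\ref{parahoric hitchin fibration}, the $\Gamma$-invariant section $b_Y$ is precisely the pullback to $\mathbb{D}_y$ of $b|_{\mathbb{D}_x}$ under the identification ${\rm Sect}^{\Gamma}(\mathbb{D}_y, \mathfrak{c}_{\mathscr{L}_Y}) = {\rm Sect}(\mathbb{D}_x, \mathfrak{c}_{\mathscr{L}})$; by canonicity of $J$ the group scheme $J_{b_Y}$ is the pullback of $J_b$, and the $\Gamma$-action it carries is exactly the descent datum identifying $J_{b_Y}$ with $J_b|_{\mathbb{D}_x}$. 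Taking $\Gamma$-invariants of $a_{F,\widetilde\phi}$ and descending along $z = w^d$ would then produce a homomorphism $J_b \to {\rm Aut}(E,\phi) \subseteq {\rm Aut}(E)$ over $\mathbb{D}_x$. Over $\mathbb{D}_x^\times$ this descended map coincides with the map $a_{E,\phi}$ already defined on $X \setminus D$, since both are the pullback of $\chi^* J \to I$; hence the local homomorphisms glue with the one on $X \setminus D$ into a global homomorphism $J_b \to {\rm Aut}(E)$ over $X$.

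The step I expect to be the main obstacle is the $\Gamma$-equivariance and descent: one must verify that $a_{F,\widetilde\phi}$ genuinely intertwines the $\Gamma$-action on $J_{b_Y}$ (coming from $\Gamma$-invariance of $b_Y$ and the canonicity of $J$) with the lifted $\Gamma$-action on ${\rm Aut}(F,\widetilde\phi)$, so that it descends to the parahoric automorphism group and not merely to $G$-torsor automorphisms over the punctured disc. The remaining points are formal consequences of the $G$-case construction recalled above and of the equivalences of \S\ref{sect corresp}.
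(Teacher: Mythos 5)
Your proposal is correct and follows essentially the same route as the paper: reduce to the formal disc at each point of $D$, pass to the tame cover $z=w^d$, identify $(E,\phi)$ with a $(\Gamma,G)$-Higgs bundle on $\mathbb{D}_y$ so that the $G$-case construction gives $J_{Y,b}\rightarrow {\rm Aut}$, note $J_{Y,b}\simeq J_{X,b}\times_X Y$, and descend by taking $\Gamma$-invariants. The $\Gamma$-equivariance point you flag as the main obstacle is exactly what the paper's phrase ``passing to $\Gamma$-invariant sections'' relies on, and it does hold by canonicity of $J$ and of $\chi^*J\rightarrow I$.
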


\begin{proof}
Away from the support of $D$, $(E,\phi)$ is identified with a $G$-Higgs bundle, and thus we get a morphism $J_{b}\rightarrow {\rm Aut}(E)$ on $X \backslash D$. We claim that this morphism extends to $X$. The approach is exactly the same as Lemma \ref{parahoric hitchin fibration}, and we only need to look at formal neighborhood of each point in the support of $D$. Let us assume $X={\rm Spec}(k[[z]])$ and $Y={\rm Spec}(k[[w]])$ where $w^d=z$. The element $b$ can be viewed as either a point in ${\rm Sect}(X,\mathfrak{c}_{\mathscr{L}})$ or ${\rm Sect}(Y,\mathfrak{c}_{\mathscr{L}_Y})$, so we get regular centralizer group schemes $J_{X,b}$ and $J_{Y,b}$ over $X$ and $Y$ respectively. We have $J_{Y,b}\simeq J_{X,b}\times_{X} Y$. The Higgs bundle $(E,\phi)$ can be identified with a $(\Gamma,G)$-equivariant Higgs bundle on $Y$ as discussed in \S\ref{subsect Higgs}, so we get a morphism $J_{Y,b}\rightarrow {\rm Aut}(E)$. Passing to $\Gamma$-invariant sections, we get $J_{X,b}\rightarrow {\rm Aut}(E)$, where $J_b = J_{X,b}$.
\end{proof}

Next we look at tame parahoric local systems. Recall that ${\rm Locsys}^{tame}_{\mathcal{P}_{\boldsymbol\theta}}$ is the stack of tame $\mathcal{P}_{\boldsymbol\theta}$-local systems on $X$.

\begin{lem}\label{image of horizontal section}
Let $(E,\nabla)$ be a tame $\mathcal{P}_{\boldsymbol\theta}$-local system on $X$. The $p$-curvature of $(E,\nabla)$ is actually in ${\rm Sect}(X',\mathfrak{c}_{\mathscr{L}'})$, and then we obtain a natural morphism
\begin{align*}
    h_p : {\rm Locsys}^{tame}_{\mathcal{P}_{\boldsymbol\theta}} \rightarrow {\rm Sect}(X',\mathfrak{c}_{\mathscr{L}'}),
\end{align*}
which is called the $p$-Hitchin morphism. More generally, if $\Psi$ is a horizontal section of ${\rm Ad}(E) \otimes F^*(\mathscr{L}')$, then the image of $\Psi$ in ${\rm Sect}(X,\mathfrak{c}_{Fr^{*}(\mathscr{L}')})$ is also horizontal.
\end{lem}

\begin{proof}
Away from the support of $D$, $\mathcal{P}_{\boldsymbol\theta}$ can be identified with $G$, so the arguments in \cite[Proposition 3.1 and Lemma 3.2]{Nonabelian Hodge prime char} shows that $h(E,\Psi)$ is a horizontal section of ${\rm Ad}(E) \otimes F^*(\mathscr{L}')$ with respect to the canonical connection on $F^{*}\mathscr{L}'$. With the same proof as in Lemma \ref{parahoric hitchin fibration}, we can extend it to $X$ and and obtain a horizontal section in ${\rm Sect}(X,\mathfrak{c}_{Fr^*(\mathscr{L}')})$.
\end{proof}

Let $(E,\nabla)$ be a tame $\mathcal{P}_{\boldsymbol\theta}$-local system, denote by $b'$ its image in ${\rm Sect}(X',\mathfrak{c}_{\mathscr{L}'})$. Similar to the case of principal bundles, the element $b'$ defines a regular centralizer group scheme $J'_{b'}$ over $X'$, and let $J_{b^p}$ be its pullback to $X$. Then one gets a group homomorphism $a_{E,\Psi}: J_{b^p}\rightarrow {\rm Aut}(E)$, where we use the same notation. Note that $J_{b^p}$ is equipped with a canonical connection as the pullback of $J'_{b'}$, while ${\rm Aut}(E)$ is equipped with a logarithmic connection. Then, we obtain the following lemma as an analogue of Lemma \ref{horizontal regular centralizer}.

\begin{lem}\label{horizontal regular centralizer parh}
The homomorphism $a_{E,\Psi}$ is horizontal.
\end{lem}

Now as we did for tame $G$-local systems, we still have an action of tame $J_{b^p}$-local systems on tame parahoric local systems:
\begin{equation}\label{action on locsys}
{\rm Locsys}^{tame}_{J^p} \times {\rm Locsys}^{tame}_{\mathcal{P}_{\boldsymbol\theta}} \rightarrow   {\rm Locsys}^{tame}_{\mathcal{P}_{\boldsymbol\theta}}.
\end{equation}
Lemma ~\ref{action over phitchin} still holds in the parahoric case. Moreover, we define two substacks $\mathcal{A}_0 \subseteq \mathcal{A} \subseteq {\rm Locsys}^{tame}_{J^p}$.
\begin{itemize}
\item $\mathcal{A}$ is the stack of tame $J^p$-local systems with zero $p$-curvature.
\item $\mathcal{A}_{0} \subseteq \mathcal{A}$ is the substack of $(P,\nabla_P) \in \mathcal{A}$, of which $\nabla_P$ has no poles.
\end{itemize}
Clearly, $\mathcal{A}$ and $\mathcal{A}_{0}$ are group stacks over $B_{\mathscr{L}'}$. Lemma \ref{action over phitchin} implies the following:
\begin{cor}
There exists a natural action of $\mathcal{A}$ on ${\rm Locsys}^{tame}_{G}$ which preserves the p-Hitchin map for ${\rm Locsys}^{tame}_{G}$.
\end{cor}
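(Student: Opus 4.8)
The plan is to obtain the statement as a formal consequence of the group-stack structure on $\mathcal{A}$ together with Lemma~\ref{action over phitchin}, so that almost no new computation is required. First I would restrict the action \eqref{action on locsys} of ${\rm Locsys}^{tame}_{J^p}$ on ${\rm Locsys}^{tame}_{G}$ along the inclusion $\mathcal{A}\hookrightarrow {\rm Locsys}^{tame}_{J^p}$. Recall that this action sends a tame $J^p$-connection $(P,\nabla_P)$ and a tame $G$-connection $(E,\nabla)$ with $p$-Hitchin image $b'=h_p(E,\nabla)$ to $P\otimes E=(a_{E,\Psi})_*P\otimes E$, where $a_{E,\Psi}:J_{b^p}\to {\rm Aut}(E)$ is the horizontal homomorphism of Lemma~\ref{horizontal regular centralizer}; because $J^p$ lives over $X\times B_{\mathscr{L}'}$, this is a morphism over $B_{\mathscr{L}'}$. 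Since $\mathcal{A}$ is a group substack of ${\rm Locsys}^{tame}_{J^p}$ over $B_{\mathscr{L}'}$, the restriction yields a morphism
\begin{align*}
\mathcal{A}\times_{B_{\mathscr{L}'}}{\rm Locsys}^{tame}_{G}\rightarrow {\rm Locsys}^{tame}_{G}.
\end{align*}

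Next I would verify that this is genuinely an action. The unit and associativity axioms for $\mathcal{A}$ are inherited from those of the full action \eqref{action on locsys}: the pushforward functor $(a_{E,\Psi})_*$ is compatible with the multiplication of $J^p$-connections, so that $P_1\otimes(P_2\otimes E)\cong(P_1\cdot P_2)\otimes E$ and the trivial connection acts trivially, and these identities are immediate from the construction of the twist in \cite[A.5]{Nonabelian Hodge prime char}. Here one uses that $\mathcal{A}$ inherits its group law from ${\rm Locsys}^{tame}_{J^p}$ and, in particular, that the product of two $J^p$-connections with vanishing $p$-curvature again has vanishing $p$-curvature, which is exactly the assertion that $\mathcal{A}$ is a group substack.

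Finally, the compatibility with the $p$-Hitchin map is precisely Lemma~\ref{action over phitchin}: for $(P,\nabla_P)\in\mathcal{A}$, which by definition has vanishing $p$-curvature, one has $h_p(P\otimes E,\nabla_{P\otimes E})=h_p(E,\nabla)$. Hence the action is fiberwise for $h_p:{\rm Locsys}^{tame}_{G}\to B_{\mathscr{L}'}$, which is the claim. I do not expect a genuine obstacle here: all the substantive input --- the construction of the action, the horizontality of $a_{E,\Psi}$, and the invariance of the $p$-Hitchin image under twisting by a $p$-curvature-zero $J^p$-connection --- is already in place, so the corollary merely repackages Lemma~\ref{action over phitchin} once the action has been restricted to $\mathcal{A}$.
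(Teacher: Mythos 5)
Your proposal is correct and follows essentially the same route as the paper, which derives the corollary directly from the already-constructed action \eqref{action on locsys} together with Lemma~\ref{action over phitchin}; the paper simply states ``Lemma~\ref{action over phitchin} implies the following'' and leaves the routine verifications (restriction to the group substack $\mathcal{A}$, inheritance of the action axioms) implicit, exactly the steps you spell out.
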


\subsection{Vector Bundle $\mathscr{B}_{\mathscr{L}'}$}
Now we construct a vector bundle $\mathscr{B}_{\mathscr{L}'}$ over $B_{\mathscr{L}'}$, of which the fiber is $H^0(X', {\rm Lie}(J'_{b'}) \otimes \mathscr{L}')$ for each $b' \in B_{\mathscr{L}'}$.
The fiber is actually the space of $p$-curvatures for logarithmic $J'_{b'}$-connections. Then the stack ${\rm Locsys}^{tame}_{J^p}$ is equipped with a natural morphism to $\mathscr{B}_{\mathscr{L}'}$. In this subsection, we will prove that this morphism is smooth.

Let $\mathcal{G}'$ be a smooth affine commutative group scheme over $X'$ and denote by $\mathcal{G}$ its Frobenius pullback to $X$. The following sequence is an analogue of \cite[Proposition A.7]{Nonabelian Hodge prime char} in our setting:
\begin{lem}\label{four term sequence}
On $X'_{\rm et}$, we have a sequence of sheaves
\begin{align*}
0 \rightarrow \mathcal{G}' \rightarrow Fr_*\mathcal{G}\xrightarrow{Fr_* d {\rm log}} {\rm Lie}(\mathcal{G}')\otimes Fr_* \mathscr{L} \xrightarrow{h_{p}} {\rm Lie}(\mathcal{G}') \otimes \mathscr{L} \rightarrow 0.
\end{align*}
It is exact except at the position ${\rm Lie}(\mathcal{G}')\otimes Fr_* \mathscr{L}$.
\end{lem}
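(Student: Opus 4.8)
The plan is to follow the proof of \cite[Proposition A.7]{Nonabelian Hodge prime char}, regarding the displayed sequence as a complex of abelian étale sheaves on $X'$ and checking exactness one spot at a time; the only genuinely new feature is the logarithmic pole along $D$ encoded in $\mathscr{L}=\Omega_X(D)$, which I will absorb by working étale-locally. Since the relative Frobenius $Fr$ is finite (indeed a universal homeomorphism), the functor $Fr_*$ is exact, so exactness may be tested on stalks at geometric points of $X'$; I may therefore assume $X'=\Spec R'$ is strictly henselian with Frobenius pullback $X=\Spec R$, and separate the points of $D$ from the complement. Throughout I use that $\mathcal{G}=Fr^*\mathcal{G}'$ carries a canonical integrable connection $\nabla_{\rm can}$ and the projection formula $Fr_*({\rm Lie}(\mathcal{G})\otimes\mathscr{L})\cong{\rm Lie}(\mathcal{G}')\otimes Fr_*\mathscr{L}$.

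The two left-hand terms are controlled by Cartier descent. The key input is the group-scheme form of Cartier's theorem: the sheaf of $\nabla_{\rm can}$-horizontal sections of $\mathcal{G}=Fr^*\mathcal{G}'$ is precisely $\mathcal{G}'$. The map $\mathcal{G}'\to Fr_*\mathcal{G}$ is the unit of adjunction, which is injective because $Fr$ is faithfully flat, giving exactness at $\mathcal{G}'$. The map $Fr_*d{\rm log}$ sends a section $g$ to $\nabla_{\rm can}(g)g^{-1}$, so its kernel consists exactly of the $\nabla_{\rm can}$-horizontal sections; by Cartier descent these are the sections descending to $\mathcal{G}'$, which gives exactness at $Fr_*\mathcal{G}$.

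To see that the sequence is a complex at the term ${\rm Lie}(\mathcal{G}')\otimes Fr_*\mathscr{L}$, I use gauge-invariance of $p$-curvature. An element $\eta$ of that term is interpreted as the connection form of $\nabla_{\rm can}+\eta$ on $\mathcal{G}$, and $h_p(\eta)$ is its $p$-curvature. If $\eta=d{\rm log}(g)$, then $\nabla_{\rm can}+\eta$ is the gauge transform of $\nabla_{\rm can}$ by $g$; since $\mathcal{G}$ is commutative the adjoint action is trivial, so the $p$-curvature is unchanged, and $\nabla_{\rm can}$ has vanishing $p$-curvature (the defining property of the Frobenius-descent connection). Hence $h_p\circ Fr_*d{\rm log}=0$. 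Exactness at this spot is exactly what is \emph{not} asserted, and the homology there is the object carrying the nonabelian Hodge content.

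The remaining and hardest point is surjectivity of $h_p$, i.e.\ exactness at the final term. Working locally I trivialize the vector bundle ${\rm Lie}(\mathcal{G}')$, reducing the statement to the additive group, where $h_p$ becomes the Katz operator $f\mapsto f^{(p)}+\delta^{p-1}(f)-f$ attached to the derivation $\delta$ dual to the chosen generator of $\mathscr{L}$. This operator is Frobenius-semilinear plus a differential term of Artin--Schreier type, and such a map is surjective in the étale topology because the associated covering is finite étale and surjective, hence split étale-locally. Away from $D$ this is literally the computation in \cite[Proposition A.7]{Nonabelian Hodge prime char} with $\delta=\partial_z$; at a point of $D$ the same computation applies with the logarithmic derivation $\delta=z\partial_z$, whose $p$-th power equals itself, so that $h_p$ retains its Artin--Schreier form and stays surjective on the pole-allowing sheaf $Fr_*\mathscr{L}$. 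Combining the four checks yields exactness everywhere except at ${\rm Lie}(\mathcal{G}')\otimes Fr_*\mathscr{L}$, as claimed.
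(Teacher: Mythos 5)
Your decomposition into four exactness checks is sound, and the first three steps (injectivity of the unit $\mathcal{G}'\to Fr_*\mathcal{G}$, identification of $\ker(Fr_*d\log)$ with $\mathcal{G}'$ by Cartier descent, and $h_p\circ Fr_*d\log=0$ by gauge-invariance of the $p$-curvature) agree in substance with what the paper simply imports from \cite[Proposition A.7]{Nonabelian Hodge prime char}; note these carry over verbatim from the no-pole case since $\Omega_X\hookrightarrow\mathscr{L}$ does not change the kernel of $d\log$. The gap is in the final step, which is the only genuinely new content of the lemma. First, you cannot ``trivialize the vector bundle ${\rm Lie}(\mathcal{G}')$ and reduce to the additive group'': the operator $h_p$ is not determined by ${\rm Lie}(\mathcal{G}')$ as a vector bundle but involves the $p$-th power operation $(-)^{[p]}$ of the restricted Lie algebra of $\mathcal{G}'$, which is zero for $\mathbb{G}_a$ but is $a\mapsto a^p$ for $\mathbb{G}_m$. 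Your own formula $f\mapsto f^{(p)}+\delta^{p-1}(f)-f$ already contradicts the claimed reduction: for the additive group the term $f^{(p)}$ would vanish, and away from $D$ the term $-f$ is absent because $\partial_z^{[p]}=0$. Second, the assertion that the operator is surjective ``because the associated covering is finite \'etale'' is not applicable: ${\rm Lie}(\mathcal{G}')\otimes Fr_*\mathscr{L}\to{\rm Lie}(\mathcal{G}')\otimes\mathscr{L}'$ is a map from a bundle of rank $p\cdot r$ onto one of rank $r$, so it is not a finite covering and Lang/Artin--Schreier theory for isogenies gives nothing directly; as written this step asserts the conclusion rather than proving it.

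The paper closes exactly this gap cheaply, and you should adopt its reduction: since the sequence without poles is already exact by \cite[Proposition A.7]{Nonabelian Hodge prime char}, comparing the residue exact sequences $0\to Fr_*\Omega_X\to Fr_*\mathscr{L}\to\bigoplus_{x\in D}k_x\to 0$ and their Frobenius-twisted analogue shows that surjectivity of $h_p$ on $Fr_*\mathscr{L}$ reduces to surjectivity of the induced map on residues at each $x'\in D'$. By \cite[Lemma A.8]{Nonabelian Hodge prime char} that induced map is $\eta\mapsto\eta^{[p]}-\eta$ on the fibre ${\rm Lie}(\mathcal{G}')|_{x'}$ --- this is precisely where your correct observation $(z\partial_z)^{[p]}=z\partial_z$ enters --- and one checks surjectivity separately for $\mathbb{G}_a$, where the map is $-{\rm id}$, and for $\mathbb{G}_m$, where it is the Artin--Schreier map $t\mapsto t^p-t$ over $k=\bar{k}$. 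That fibrewise computation is the single honest Artin--Schreier input; your proposal never isolates it.
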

\begin{proof}
By Proposition A.7 of \cite{Nonabelian Hodge prime char}, we have the following sequence:
\begin{equation}\label{exact sequence}
0\rightarrow\mathcal{G}'\rightarrow Fr_*\mathcal{G}\xrightarrow{Fr_* d {\rm log}} {\rm Lie}(\mathcal{G}') \otimes Fr_*\Omega_{X}\xrightarrow{h_{p}} {\rm Lie}(\mathcal{G}') \otimes \Omega_{X'}\rightarrow 0,
\end{equation}
which is exact. Recall that $\mathscr{L}=\Omega_X(D)$. So it remains to show $h_{p}$ is surjective as a morphism from ${\rm Lie}(\mathcal{G}')\otimes Fr_*\Omega_{X}(x)$ to ${\rm Lie}(\mathcal{G}')\otimes\Omega_{X'}(x')$ for each $x \in D$. Using the surjectivity of $h_{p}$ in \ref{exact sequence}, we only need to show the restriction of $h_{p}$ to $x$ is surjective. By the description of $h_{p}$ in \cite[Lemma A.8]{Nonabelian Hodge prime char}, we have $h_{p}(\eta_{x})=\eta^{[p]}_{x'}-\eta_{x'}$, where $\eta_{x'}\in{\rm Lie}(\mathcal{G}')|_{x'}$ and $(-)^{[p]}$ is the $p$-power operator. Since $\mathcal{G}$ is a smooth affine commutative group scheme, we may assume $\mathcal{G}'|_{x'}$ is either $\mathbb{G}_{m}$ or $\mathbb{G}_{a}$ and the assertion follows directly.
\end{proof}

Now we assume that $\mathcal{G}'$ is a regular centralizer group scheme (see \cite{The gerbe of Higgs bundles}).
\begin{lem}\label{surjective for residual}
Let $\eta$ be an element in ${\rm Lie}(\mathcal{G}')|_{x'}$ such that $\eta^{[p]}-\eta=0$. Then, in the \'etale topology, there exists $\omega\in {\rm Lie}(\mathcal{G}')\otimes Fr_* \Omega_{X}(x)$ such that $h_{p}(\omega)=0$ and the residue of $\omega$ at $x$ is equal to $\eta$.
\end{lem}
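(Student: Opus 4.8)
The plan is to construct $\omega$ in two stages: first write down an explicit form with the prescribed residue, and then correct it by a pole-free term that kills $h_p$ without disturbing the residue, using the two exact sequences supplied by Lemma~\ref{four term sequence}. Since the assertion is étale-local, I would work on an étale neighborhood of $x'$ in $X'$. Let $z$ be a local coordinate on $X$ at $x$, so that $\frac{dz}{z}$ is a section of $Fr_*\Omega_X(x)$ with residue $1$ at $x$. As $\mathcal{G}'$ is smooth, ${\rm Lie}(\mathcal{G}')$ is locally free, so I can extend $\eta$ to a local section $\widetilde\eta$ of ${\rm Lie}(\mathcal{G}')$ near $x'$ and set
\begin{align*}
\omega_0 := \widetilde\eta\otimes\tfrac{dz}{z}\ \in\ {\rm Lie}(\mathcal{G}')\otimes Fr_*\Omega_X(x).
\end{align*}
By construction the residue of $\omega_0$ at $x$ is $\widetilde\eta|_{x'}\otimes 1=\eta$.

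Next I would compute the defect $h_p(\omega_0)$ and use the hypothesis on $\eta$. By the description of $h_p$ on residues recorded in the proof of Lemma~\ref{four term sequence}, namely $h_p(\eta_x)=\eta_{x'}^{[p]}-\eta_{x'}$ at the level of residues, the residue of $h_p(\omega_0)$ at $x'$ equals $\eta^{[p]}-\eta$, which vanishes by assumption. A section of ${\rm Lie}(\mathcal{G}')\otimes\Omega_{X'}(x')$ with vanishing residue has no pole, so in fact $h_p(\omega_0)$ lies in the pole-free subsheaf ${\rm Lie}(\mathcal{G}')\otimes\Omega_{X'}$. This is the one place where the condition $\eta^{[p]}-\eta=0$ is essential: it is exactly what confines the defect to the regular locus where the pole-free sequence applies.

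Finally I would invoke the pole-free exact sequence \eqref{exact sequence} from Lemma~\ref{four term sequence}, whose exactness at the last term means that $h_p\colon {\rm Lie}(\mathcal{G}')\otimes Fr_*\Omega_X\rightarrow {\rm Lie}(\mathcal{G}')\otimes\Omega_{X'}$ is surjective as a map of étale sheaves. Hence, after possibly passing to a smaller étale neighborhood of $x'$, there is a pole-free section $\omega_1\in {\rm Lie}(\mathcal{G}')\otimes Fr_*\Omega_X$ with $h_p(\omega_1)=-h_p(\omega_0)$. Setting $\omega:=\omega_0+\omega_1$ then gives $h_p(\omega)=0$, while ${\rm Res}_x(\omega)={\rm Res}_x(\omega_0)+{\rm Res}_x(\omega_1)=\eta+0=\eta$ since $\omega_1$ has no pole. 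This is precisely the claimed form.

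The argument is short once the two ingredients are in place, so the main obstacle is really in justifying those ingredients rather than in the assembly. The key point to check carefully is the compatibility of $h_p$ with taking residues — that the residue of $h_p(\omega_0)$ is $({\rm Res}_x\omega_0)^{[p]}-{\rm Res}_x\omega_0$ — which I would extract from the local formula for $h_p$ used in the proof of Lemma~\ref{four term sequence} (ultimately from \cite[Lemma A.8]{Nonabelian Hodge prime char}). The second delicate point is that the correction $\omega_1$ can be chosen pole-free; this hinges on the surjectivity in the pole-free sequence \eqref{exact sequence}, so that correcting $h_p(\omega_0)$ never forces us back into the poled sheaf and thus leaves the residue untouched.
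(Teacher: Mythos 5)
Your argument is correct, but it follows a genuinely different route from the paper. The paper's proof is structural: it observes that $\eta^{[p]}-\eta=0$ forces $\eta$ to be an integral element of $Z(\mathfrak{l})$, the centre of the Levi attached to the semisimple part of $\phi_{x'}$; it then passes to an \'etale cover (via Lemma B.0.3 of \cite{autoduality of hitchin}) on which $\mathcal{G}'$ becomes the regular centralizer for that Levi, so that $Z(L)$ embeds as a subgroup of $\mathcal{G}'$, and finally takes $\omega=Fr_*d\log(f)$ for an explicit $f\in\mathcal{G}'(X'-x')$ coming from a cocharacter of $Z(L)$; the vanishing $h_p(\omega)=0$ is then automatic because $\omega$ lies in the image of $Fr_*d\log$ in the sequence of Lemma~\ref{four term sequence}. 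Your ``guess and correct'' argument instead uses only the two formal inputs you isolate: the residue formula $h_p(\eta_x)=\eta_{x'}^{[p]}-\eta_{x'}$ (which is exactly what the paper establishes inside the proof of Lemma~\ref{four term sequence}, quoting \cite[Lemma A.8]{Nonabelian Hodge prime char}) and the surjectivity of $h_p$ in the pole-free sequence \eqref{exact sequence}; the additivity of $h_p$ that you use implicitly when setting $\omega=\omega_0+\omega_1$ holds because the four-term sequence is a sequence of sheaves of abelian groups ($\mathcal{G}$ being commutative), though it would be worth saying so explicitly since $h_p$ is not $\mathcal{O}$-linear. Your proof is more elementary --- it avoids the structure theory of regular centralizers and the auxiliary \'etale cover entirely --- and it actually proves the statement for an arbitrary smooth commutative group scheme, not just a regular centralizer. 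What the paper's construction buys is an explicit representative visibly of the form $Fr_*d\log(f)$, i.e.\ a connection form trivialized by a $\mathcal{G}'$-valued function; but since Corollary~\ref{gerbe of fix p curvature} only invokes the statement of the lemma and not this particular shape of $\omega$, your argument would serve equally well downstream.
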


\begin{proof}
One may assume $X'$ is affine and that $\mathcal{G}'$ is the regular centralizer determined by an element $\phi\in\mathfrak{g}(X')$. Let $L$ be the Levi subgroup of $G:=\mathcal{G}'|_{x'}$ determined by the semisimple part of the fiber $\phi_{x'}$. Then $\eta$ can be identified with an integral element in $Z(\mathfrak{l})$. By \cite[Lemma B.0.3]{autoduality of hitchin}, one may pass to an \'etale cover of $X'$ so that $\mathcal{G}'$ is the regular centralizer for the Levi subgroup $L$. In this case, $Z(L)$ can be identified with a subgroup of $\mathcal{G}'$, hence locally there exists an element $f\in\mathcal{G}'(X'\backslash x')$ such that the residue of $\omega=Fr_*d{\rm log}(f)$ is $\eta$ and $h_{p}(\omega)=0$.
\end{proof}

\begin{cor}\label{gerbe of fix p curvature}
Let $q\in{\rm Lie}(\mathcal{G}')\otimes\Omega_{X'}(x')$. We take an element $\eta\in{\rm Lie}(\mathcal{G}')|_{x'}$ such that $h_{p}(\eta)$ is equal to the residue of $q$ at $x'$. Then the category of tame $\mathcal{G}$-local systems on $X$ such that the $p$-curvature equal to $q$ and the residue equal to $\eta$ is a gerbe over ${\rm Bun}_{\mathcal{G}'}$.
\end{cor}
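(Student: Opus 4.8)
The plan is to use that $\mathcal{G}$ is commutative, so that tame $\mathcal{G}$-connections form a Picard stack under tensor product and both the $p$-curvature and the residual at $x'$ are homomorphisms of Picard stacks. Writing $\mathcal{C}_{q,\eta}$ for the category cut out by $p$-curvature $q$ and residual $\eta$, and $\mathcal{C}_{0,0}$ for the Picard substack with vanishing $p$-curvature and vanishing residual, the tensor difference of two objects of $\mathcal{C}_{q,\eta}$ lies in $\mathcal{C}_{0,0}$ and $\mathcal{C}_{0,0}$ acts on $\mathcal{C}_{q,\eta}$; thus $\mathcal{C}_{q,\eta}$ is a pseudo-torsor over $\mathcal{C}_{0,0}$. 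The proof then splits into (i) identifying $\mathcal{C}_{0,0}$ with ${\rm Bun}_{\mathcal{G}'}$ and (ii) showing $\mathcal{C}_{q,\eta}\neq\varnothing$, which promotes the pseudo-torsor to a genuine torsor, i.e. a gerbe, over ${\rm Bun}_{\mathcal{G}'}$.

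For (i), vanishing of the residual forces the connection to have no pole, so an object of $\mathcal{C}_{0,0}$ is a $\mathcal{G}$-torsor on $X$ carrying a flat connection (zero $p$-curvature) with no pole. By Cartier descent --- which at the level of sheaves is exactly the left-exactness $0\to\mathcal{G}'\to Fr_*\mathcal{G}\xrightarrow{Fr_*d\log}{\rm Lie}(\mathcal{G}')\otimes Fr_*\mathscr{L}$ of Lemma~\ref{four term sequence}, exhibiting $\mathcal{G}'$ as the sheaf of horizontal sections of $Fr_*\mathcal{G}$ --- such objects descend equivalently to $\mathcal{G}'$-torsors on $X'$. Hence $\mathcal{C}_{0,0}\cong{\rm Bun}_{\mathcal{G}'}$ as Picard stacks, and the automorphism group of any object (the global horizontal sections of $\mathcal{G}$, namely $\mathcal{G}'(X')$) agrees with that in ${\rm Bun}_{\mathcal{G}'}$.

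For (ii), the hard part, I would realize the $p$-curvature first. Let $C_1^\bullet=[Fr_*\mathcal{G}\xrightarrow{Fr_*d\log}{\rm Lie}(\mathcal{G}')\otimes Fr_*\mathscr{L}]$, whose $\mathbf{H}^1$ on $X'$ is the full stack of tame $\mathcal{G}$-connections, and $C_0^\bullet=[Fr_*\mathcal{G}\to\ker h_p]$, whose $\mathbf{H}^1$ is the connections with vanishing $p$-curvature; the surjectivity of $h_p$ in Lemma~\ref{four term sequence} gives a short exact sequence $0\to C_0^\bullet\to C_1^\bullet\to({\rm Lie}(\mathcal{G}')\otimes\mathscr{L}')[-1]\to 0$. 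In the resulting long exact sequence the obstruction to realizing the class $q$ lies in $\mathbb{H}^2(X',C_0^\bullet)$. The homology of the four-term sequence at ${\rm Lie}(\mathcal{G}')\otimes Fr_*\mathscr{L}$ is a skyscraper supported at $x'$ (the sequence of Lemma~\ref{four term sequence} is exact away from the pole), so it contributes nothing in degree one and $\mathbb{H}^2(X',C_0^\bullet)$ reduces to $H^2(X',\mathcal{G}')$. Decomposing the smooth commutative group scheme $\mathcal{G}'$ into its $\mathbb{G}_a$- and $\mathbb{G}_m$-constituents, this vanishes on a curve over the algebraically closed field $k$ (since $H^2(X',\mathcal{O}_{X'})=0$ and $H^2_{\mathrm{et}}(X',\mathbb{G}_m)=0$ by Tsen's theorem), so a connection with $p$-curvature $q$ exists. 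Its residual $\eta_0$ satisfies $h_p(\eta_0)={\rm Res}_{x'}(q)=h_p(\eta)$, so $\eta-\eta_0$ is killed by the Artin--Schreier map; Lemma~\ref{surjective for residual} then produces a tame $\mathcal{G}$-connection with vanishing $p$-curvature and residual $\eta-\eta_0$, and tensoring by it corrects the residual to $\eta$ without changing the $p$-curvature. Hence $\mathcal{C}_{q,\eta}$ is non-empty.

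Combining the two parts, a choice of base object $(E_0,\nabla_0)\in\mathcal{C}_{q,\eta}$ gives an equivalence $(E,\nabla)\mapsto(E\otimes E_0^{-1},\nabla-\nabla_0)$ from $\mathcal{C}_{q,\eta}$ onto $\mathcal{C}_{0,0}\cong{\rm Bun}_{\mathcal{G}'}$, simply transitive and compatible with automorphisms, which is precisely the assertion that $\mathcal{C}_{q,\eta}$ is a gerbe over ${\rm Bun}_{\mathcal{G}'}$. I expect the genuine difficulty to be concentrated in step (ii), and specifically in the vanishing of the gluing obstruction $\mathbb{H}^2(X',C_0^\bullet)$; the Picard-stack bookkeeping in the other steps is formal.
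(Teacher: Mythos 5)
Your argument is correct and, at its core, uses the same ingredients as the paper's (very terse) proof: the surjectivity of $h_p$ in Lemma~\ref{four term sequence} together with Lemma~\ref{surjective for residual} to produce a local $1$-form with the prescribed $p$-curvature and residue, and the exactness of the pole-free four-term sequence to identify the difference of two such choices with ${\rm im}(Fr_*d\log)\cong Fr_*\mathcal{G}/\mathcal{G}'$, whence the band is $\mathcal{G}'$ and the category is a torsor under ${\rm Bun}_{\mathcal{G}'}$. Where you go beyond the paper is in making the \emph{global} non-emptiness explicit via the obstruction class in $\mathbb{H}^2(X',C_0^{\bullet})$; the paper's proof only records the local existence and the torsor structure, and the surjectivity needed downstream (Lemma~\ref{torsors over group stack}) is delegated to Lemmas 3.15--3.16 of \cite{Nonabelian Hodge prime char}. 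This is a worthwhile addition, with two small caveats. First, the regular centralizer $\mathcal{G}'$ does not literally decompose into $\mathbb{G}_a$- and $\mathbb{G}_m$-constituents over $X'$; what you actually need is the standard vanishing $H^2_{\mathrm{et}}(X',\mathcal{G}')=0$ for a smooth affine commutative group scheme on a curve over an algebraically closed field (the same input as in Chen--Zhu), so phrase it that way or cite it. Second, in your residue-correction step you invoke Lemma~\ref{surjective for residual} as if it produced a \emph{global} tame $\mathcal{G}$-connection with vanishing $p$-curvature and residue $\eta-\eta_0$, but that lemma is only \'etale-local; you must run the same torsor-plus-vanishing-obstruction argument once more for the sheaf of local $\omega$ with $h_p(\omega)=0$ and prescribed residue (equivalently, treat the case $q=0$ of the corollary first), or, more economically, build the residue condition into the sheaf of local solutions from the start, as the paper does, so that a single gluing argument handles $p$-curvature and residue simultaneously.
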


\begin{proof}
Combine Lemma~\ref{four term sequence} and Lemma~\ref{surjective for residual}, one concludes that locally over $X'$, there exists $\omega\in{\rm Lie}(\mathcal{G}')\otimes Fr_*\Omega_{X}(x)$ such that $h_{p}(\omega)=q$ and that the residue of $\omega$ is equal to $\eta$. Moreover, two different choices of such $\omega$ satisfies the following conditions: $\omega_{1}-\omega_{2}\in{\rm Lie}(\mathcal{G}')\otimes Fr_*\Omega_{X}$ and $h_{p}(\omega_{1}-\omega_{2})=0$. Now the claim follows from the exactness of \ref{exact sequence}.
\end{proof}

Now we are ready to prove the following:
\begin{lem}\label{smoothness of locsys}
${\rm Locsys}^{tame}_{J^p}$ is smooth over $\mathscr{B}_{\mathscr{L}'}$.
\end{lem}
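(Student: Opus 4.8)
The plan is to verify smoothness of the $p$-curvature morphism $\Phi\colon {\rm Locsys}^{tame}_{J^p}\to\mathscr{B}_{\mathscr{L}'}$, which over $B_{\mathscr{L}'}$ sends $(b',P,\nabla_P)$ to $(b',\Psi(\nabla_P))$, by the infinitesimal lifting criterion. So I would take a square-zero extension $R\twoheadrightarrow R_0$ of Artinian local $k$-algebras with kernel $I$, a point $(b'_R,q_R)\in\mathscr{B}_{\mathscr{L}'}(R)$, and a tame $J^p$-connection $(P_0,\nabla_0)$ over $R_0$ lying over $(b'_{R_0},q_{R_0})$, and try to produce a lift over $R$ with $p$-curvature $q_R$. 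Since $\Phi$ is a morphism over $B_{\mathscr{L}'}$, the Hitchin datum $b'_R$ is prescribed on both sides; as $J'$ is smooth over $\mathfrak{c}'$, it determines the smooth commutative group scheme $\mathcal{G}_R:=J_{b^p,R}=Fr^{*}J'_{b'_R}$ over $X_R$ reducing to $\mathcal{G}_{R_0}$ modulo $I$.

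The lift then proceeds in two stages. First I lift the underlying torsor $P_0$ to a $\mathcal{G}_R$-torsor $P$: the obstruction lies in $H^{2}(X,{\rm Lie}(\mathcal{G})\otimes_k I)$, which vanishes because $X$ is a curve, and the choices of $P$ form a torsor under $H^{1}(X,{\rm Lie}(\mathcal{G})\otimes_k I)$. Second, and this is the heart of the argument, I must lift $\nabla_0$ to a tame connection $\nabla$ on $P$ with $\Psi(\nabla)=q_R$. This is exactly the situation controlled by the four-term sequence of Lemma~\ref{four term sequence} for $\mathcal{G}'=J'_{b'}$ on $X'_{\mathrm{et}}$: connection forms are sections of ${\rm Lie}(\mathcal{G}')\otimes Fr_*\mathscr{L}$, torsor automorphisms act through $Fr_*\,d{\rm log}$, and the $p$-curvature is read off by $h_p$, which that lemma shows is a surjection of \'etale sheaves. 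Tensoring with $I$ (so that $I^{2}=0$ linearizes the Artin--Schreier term $\eta\mapsto\eta^{[p]}-\eta$) and taking cohomology over the curve $X'$, the obstruction to adjusting a chosen lift of $\nabla_0$ so that its $p$-curvature becomes $q_R$ is governed by the surjectivity of $h_p$ together with Lemma~\ref{surjective for residual}, which provides the correct residual behaviour at each point of $D$ (realizing the admissible $\eta$ with $h_p(\eta)={\rm res}(q_R)$). The regular part and the polar part are matched separately, and $H^{2}(X',-)=0$ kills whatever obstruction remains, yielding the desired $\nabla$.

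As a consistency check on the global geometry, I would read off the fibres of $\Phi$ from Corollary~\ref{gerbe of fix p curvature}: over a point $(b',q)$ the fibre is, after fixing the finitely many admissible residuals $\eta$ with $h_p(\eta)={\rm res}(q)$, a disjoint union of gerbes over ${\rm Bun}_{\mathcal{G}'}$. Since $\mathcal{G}'$ is a smooth commutative group scheme on a curve, ${\rm Bun}_{\mathcal{G}'}$ is smooth, so the fibres are smooth of constant relative dimension; this confirms that the lifting criterion indeed yields smoothness rather than mere flatness, and one could alternatively conclude by combining this fibrewise smoothness with smoothness of $\mathscr{B}_{\mathscr{L}'}$ (a vector bundle over $B_{\mathscr{L}'}$) and a miracle-flatness argument.

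I expect the main obstacle to be the third step: one must track the non-additivity of the $p$-curvature across the square-zero extension and check that, because $I^{2}=0$, matching $q_R$ reduces to the sheaf-level surjectivity of $h_p$ (Lemma~\ref{four term sequence}) and the residual surjectivity (Lemma~\ref{surjective for residual}), with all higher obstructions vanishing on the curve. The bookkeeping at the points of $D$, where the pole order and the residual constraint interact, is where the care is needed.
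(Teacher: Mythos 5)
Your argument is correct in substance and rests on the same two pillars as the paper's proof: the surjectivity of $h_p$ in the four-term sequence of Lemma~\ref{four term sequence} and the vanishing of degree-two coherent cohomology on a curve. The paper packages the deformation theory as surjectivity of the tangent map to $H^0(X',{\rm Lie}(J'_{b'})\otimes\mathscr{L}')$, computed via the hypercohomology of the two-term complex ${\rm Lie}(J'_{b'})\xrightarrow{\nabla}{\rm Lie}(J'_{b'})\otimes\mathscr{L}'$ (which automatically handles the point that lifting the connection for a \emph{fixed} torsor lift is obstructed in an $H^1$ that need not vanish, so torsor and connection must be deformed together), rather than running the lifting criterion over Artinian extensions; the content is the same.
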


\begin{proof}
Fixing a point $b' \in B_{\mathscr{L}'}$, the fibers of ${\rm Locsys}^{tame}_{J^p}$ and $\mathscr{B}_{\mathscr{L}'}$ over $b$ are smooth. We will show that
\begin{align*}
{\rm Locsys}^{tame}_{J_{b^p}} \rightarrow H^0(X', {\rm Lie}(J'_{b'}) \otimes \mathscr{L}')
\end{align*}
is smooth. It suffices to prove that the induced morphism on tangent space is surjective. Let $K^{\bullet}$ be the complex given by:
\begin{align*}
K^{\bullet}: {\rm Lie}(J'_{b'})\xrightarrow{\nabla}{\rm Lie}(J'_{b'}) \otimes \mathscr{L}'.
\end{align*}
The tangent space of ${\rm Locsys}^{tame}_{J_{b^p}}$ is given by $H^{1}(X',K^{\bullet})$. Let $\mathcal{H}^{i}(K^{\bullet})$ be the cohomology group of $K^{\bullet}$ at degree $i$. Then by Lemma~\ref{four term sequence}, the cohomology $\mathcal{H}^{1}(K^{\bullet})$ admits a surjective morphism to ${\rm Lie}(J'_{b'})\otimes\mathscr{L}'$ such that the kernel is supported at $x'$. Thus, one may view $\mathcal{H}^{1}(K^{\bullet})$ as a coherent sheaf on $X'$ and that ${\rm Lie}(J'_{b'}) \otimes \mathscr{L}'$ is a direct summand of $\mathcal{H}^{1}(K^{\bullet})$. Then the morphism
\begin{align*}
H^0(X',\mathcal{H}^{1}(K^{\bullet}))\rightarrow H^{0}(X',{\rm Lie}(J'_{b'}) \otimes \mathscr{L}' )
\end{align*}
is surjective. Since $H^{2}(X',\mathcal{H}^{0}(K^{\bullet}))=0$, we have $H^{1}(X',K^{\bullet})\rightarrow H^0(X',\mathcal{H}^{1}(K^{\bullet}))$ is surjective. Therefore, we have
\begin{align*}
H^{1}(X',K^{\bullet})\rightarrow H^{0}(X',{\rm Lie}(J'_{b'}) \otimes \mathscr{L}')
\end{align*}
is surjective. This finishes the proof.
\end{proof}


\subsection{Tame Nonabelian Hodge Correspondence}\label{subsect_tnhc}
By Lemma \ref{smoothness of locsys}, we have a smooth morphism ${\rm Locsys}^{tame}_{J^p} \rightarrow \mathscr{B}_{\mathscr{L}'}$. The tautological section $\mathfrak{c} \rightarrow {\rm Lie}(J)$ induces a canonical section $\tau':B_{\mathscr{L}'} \rightarrow \mathscr{B}_{\mathscr{L}'}$.
Then we define $\mathcal{H}$ as the pullback of the following diagram
\begin{center}
\begin{tikzcd}
\mathcal{H}  \arrow[d,dotted] \arrow[r,dotted]  & {\rm Locsys}^{tame}_{J^p} \arrow[d] \\
B_{\mathscr{L}'} \arrow[r, "\tau'"] & \mathscr{B}_{\mathscr{L}'} \ ,
\end{tikzcd}
\end{center}
\noindent For each point $b' \in B_{\mathscr{L}'}$, the fiber of $\mathscr{B}_{\mathscr{L}'} \rightarrow B_{\mathscr{L}'}$ is $H^0(X',{\rm Lie}(J'_{b'})\otimes \mathscr{L}')$. One may consider the residue morphism:
\begin{align*}
H^0(X',{\rm Lie}(J'_{b'})\otimes
\mathscr{L}')\xrightarrow{\residual} {\rm Lie}(J'_{b'})\otimes
\mathscr{L}'|_{x'}.
\end{align*}
By Lemma \ref{smoothness of locsys}, we get a morphism by taking over all $b' \in B_{\mathscr{L}'}$
\begin{align*}
{\rm Locsys}^{tame}_{J^{p}}\rightarrow {\rm Lie}(J')\otimes
\mathscr{L}'|_{x'}.
\end{align*}
Similarly, by taking the residue of logarithmic connections, we get a morphism
\begin{align*}
{\rm Locsys}^{tame}_{J^{p}}\rightarrow {\rm Lie}(J')\otimes
\mathscr{L}|_{x}.
\end{align*}
Then under the Artin-Schreier map, we have the following commutative diagram:
$$
\xymatrix{
{\rm Locsys}^{tame}_{J^{p}} \ar[r] \ar[rd] & {\rm Lie}(J')\otimes
\mathscr{L}|_{x} \ar[d]^{\ppower}\\
 & {\rm Lie}(J')\otimes
\mathscr{L}'|_{x'}
 }
$$
where ${\rm AS}$ is induced from the map
\begin{align*}
{\rm Lie}(J') \rightarrow {\rm Lie}(J'), \quad g \rightarrow g^{[p]}-g,
\end{align*}
where $(-)^{[p]}$ is the $p$-power operator. With respect to the above discussion, we get the following picture:
\begin{equation}\label{key diagram for hitchin base}
\xymatrix{
\mathcal{H} \ar[r] \ar[d] & {\rm Locsys}^{tame}_{J^p} \ar[r] \ar[d] & {\rm Lie}(J')\otimes
\mathscr{L}|_{x} \ar[d]^{\ppower}\\
B_{\mathscr{L}'} \ar[r]^{\tau'} & \mathscr{B}_{\mathscr{L}'} \ar[r] & {\rm Lie}(J')\otimes
\mathscr{L}'|_{x'}
}
\end{equation}
Here we view ${\rm Lie}(J')\otimes \mathscr{L}|_{x}$ and ${\rm Lie}(J')\otimes \mathscr{L}'|_{x'}$ as vector bundles over $\mathfrak{c}_{\mathscr{L}}|_{x}$. The diagram \ref{key diagram for hitchin base} implies that we have the induced morphism
\begin{align*}
\mathcal{H}\rightarrow B_{\mathscr{L}'} \times_{ {\rm Lie}(J')\otimes
\mathscr{L}'|_{x'} } {\rm Lie}(J')\otimes
\mathscr{L}|_{x}.
\end{align*}
To simplify the notations, let us denote
\begin{align*}
B^{\rm ext}_{\mathscr{L}'}:=B_{\mathscr{L}'} \times_{ {\rm Lie}(J')\otimes
\mathscr{L}'|_{x'} } {\rm Lie}(J')\otimes
\mathscr{L}|_{x}.
\end{align*}
Since $\ppower$ in \ref{key diagram for hitchin base} is \'etale and surjective, the morphism $B^{\rm ext}_{\mathscr{L}'} \rightarrow B_{\mathscr{L}'}$ is also \'etale and surjective.
\begin{lem}\label{torsors over group stack}
The stack $\mathcal{H}$ is an $\mathcal{A}$-torsor  over $B_{\mathscr{L}'}$, and it is also an $(\mathcal{A}_{0} \times_{B_{\mathscr{L}'}} B^{\rm ext}_{\mathscr{L}'})$-torsor over $B^{\rm ext}_{\mathscr{L}'}$.
\end{lem}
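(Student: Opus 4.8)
The plan is to read both torsor statements off the single homomorphism of group stacks given by $p$-curvature, and to obtain the second from the first by additionally recording the residue at $x$. Since the regular centralizers $J'_{b'}$ are commutative, the relative stack ${\rm Locsys}^{tame}_{J^p}$ is a Picard stack over $B_{\mathscr{L}'}$, the vector bundle $\mathscr{B}_{\mathscr{L}'}$ is an abelian group scheme over $B_{\mathscr{L}'}$, and by additivity of $p$-curvature for commutative groups (as in \cite[A.5]{Nonabelian Hodge prime char}) the $p$-curvature
\begin{align*}
P \colon {\rm Locsys}^{tame}_{J^p} \longrightarrow \mathscr{B}_{\mathscr{L}'}
\end{align*}
is a homomorphism of group stacks over $B_{\mathscr{L}'}$ whose kernel is precisely $\mathcal{A}$. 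By Lemma~\ref{smoothness of locsys} the morphism $P$ is smooth, and its fibers are nonempty by Corollary~\ref{gerbe of fix p curvature}, so $P$ is a smooth epimorphism. For an epimorphism of group stacks the preimage of any section of the target is a torsor under the kernel; since $\mathcal{H}$ is by construction the fiber product of $P$ with $\tau'$, i.e.\ $P^{-1}(\tau'(B_{\mathscr{L}'}))$, this already gives that $\mathcal{H}$ is an $\mathcal{A}$-torsor over $B_{\mathscr{L}'}$.

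For the second assertion I would refine $P$ by remembering the residue at $x$. Taking residues of the connection and of its $p$-curvature produces the commutative square of diagram~\ref{key diagram for hitchin base}, whose vertical arrow is the Artin--Schreier operator ${\rm AS}(\eta)=\eta^{[p]}-\eta$; this is exactly the relation $h_p(\eta_x)=\eta^{[p]}_{x'}-\eta_{x'}$ recorded in Lemma~\ref{four term sequence}. Restricting the residue morphism to $\mathcal{A}$ and using that the $p$-curvature vanishes there, I obtain a homomorphism $\mathcal{A}\to {\rm Lie}(J')\otimes\mathscr{L}|_x$ landing in $\ker({\rm AS})$; it is surjective onto $\ker({\rm AS})$ and smooth by Lemma~\ref{surjective for residual}, and its kernel consists of tame $J^p$-connections with vanishing $p$-curvature and vanishing residue, i.e.\ those with no pole, which is exactly $\mathcal{A}_0$. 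This yields a short exact sequence of group stacks over $B_{\mathscr{L}'}$,
\begin{align*}
0\to\mathcal{A}_0\to\mathcal{A}\to\ker({\rm AS})\to 0.
\end{align*}

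Finally I would combine this sequence with the torsor $\mathcal{H}\to B_{\mathscr{L}'}$. Since ${\rm AS}$ is \'etale and surjective, the base change $B^{\rm ext}_{\mathscr{L}'}\to B_{\mathscr{L}'}$ is a torsor under $\ker({\rm AS})$, and the residue map carries $\mathcal{H}$ into $B^{\rm ext}_{\mathscr{L}'}$ compatibly, equivariantly through $\mathcal{A}\to\ker({\rm AS})$. By the standard manipulation of torsors under an extension, an $\mathcal{A}$-torsor equipped with a $\ker({\rm AS})$-equivariant map to a $\ker({\rm AS})$-torsor becomes, over that base, a torsor under the kernel $\mathcal{A}_0$ (pulled back to $B^{\rm ext}_{\mathscr{L}'}$); this gives that $\mathcal{H}$ is an $(\mathcal{A}_{0}\times_{B_{\mathscr{L}'}}B^{\rm ext}_{\mathscr{L}'})$-torsor over $B^{\rm ext}_{\mathscr{L}'}$. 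I expect the genuine content to lie entirely in the two surjectivity statements --- that $P$ attains every $p$-curvature and that every Artin--Schreier-admissible residue is realized by a connection of zero $p$-curvature --- which is where Corollary~\ref{gerbe of fix p curvature} and Lemma~\ref{surjective for residual} do the work; once these are in hand, identifying the kernels as $\mathcal{A}$ and $\mathcal{A}_0$ and assembling the torsor structures is formal.
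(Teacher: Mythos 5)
Your argument follows the same route as the paper: the fibers of $\mathcal{H}$ over $B_{\mathscr{L}'}$ (resp.\ $B^{\rm ext}_{\mathscr{L}'}$) are pseudo-torsors under the fibers of $\mathcal{A}$ (resp.\ $\mathcal{A}_0\times_{B_{\mathscr{L}'}}B^{\rm ext}_{\mathscr{L}'}$), smoothness comes from Lemma~\ref{smoothness of locsys}, and everything reduces to nonemptiness of the fibers. Your packaging of the second assertion via the short exact sequence $0\to\mathcal{A}_0\to\mathcal{A}\to\ker({\rm AS})\to 0$ and the identification of $B^{\rm ext}_{\mathscr{L}'}\to B_{\mathscr{L}'}$ as the pulled-back $\ker({\rm AS})$-torsor is a clean reformulation of what the paper checks fiberwise, and the formal torsor manipulation you invoke is valid.

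The one genuine gap is in the sentence ``its fibers are nonempty by Corollary~\ref{gerbe of fix p curvature}.'' That corollary only exhibits the category of tame $\mathcal{G}$-connections with prescribed $p$-curvature and residue as a gerbe (a torsor under the Picard stack ${\rm Bun}_{\mathcal{G}'}$): it is built from \emph{local} solvability of $h_p(\omega)=q$ with prescribed residue, via Lemma~\ref{four term sequence} and Lemma~\ref{surjective for residual}. A gerbe is locally nonempty by construction, but global nonemptiness requires killing the obstruction class of this torsor; this is exactly why the paper supplements the corollary with Lemma 3.15 and Lemma 3.16 of \cite{Nonabelian Hodge prime char}, which neutralize the gerbe (vanishing of the relevant $H^2$/obstruction for the commutative group scheme $J'_{b'}$ over the curve $X'$). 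Without that input, your claim that $P$ is an epimorphism --- equivalently that $\mathcal{H}\to B^{\rm ext}_{\mathscr{L}'}$ is surjective --- is not yet established. The same caveat applies to your surjectivity of $\mathcal{A}\to\ker({\rm AS})$, where Lemma~\ref{surjective for residual} is again only an \'etale-local statement. Once this global existence is supplied, the rest of your argument is formal and correct.
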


\begin{proof}
First, the fibers of $\mathcal{H} \rightarrow B^{\rm ext}_{\mathscr{L}'}$ (resp. $\mathcal{H} \rightarrow B_{\mathscr{L}'}$) are torsors over fibers of $\mathcal{A}_{0}\times_{B_{\mathscr{L}'}} B^{\rm ext}_{\mathscr{L}'} \rightarrow B^{\rm ext}_{\mathscr{L}'}$ (resp. $\mathcal{A} \rightarrow B_{\mathscr{L}'}$). Then, since $B^{\rm ext}_{\mathscr{L}'}$ is \'etale over $B_{\mathscr{L}'}$, $\mathcal{H}$ is smooth over both $B^{\rm ext}_{\mathscr{L}'}$ and $B_{\mathscr{L}'}$ by Lemma~\ref{smoothness of locsys}. Also, $B^{\rm ext}_{\mathscr{L}'}$ maps surjectively to $B_{\mathscr{L}'}$. Thus, it is enough to show $\mathcal{H}\rightarrow B^{\rm ext}_{\mathscr{L}'}$ is surjective, which is a consequence of Corollary~\ref{gerbe of fix p curvature} as well as \cite[Lemma 3.15 and Lemma 3.16]{Nonabelian Hodge prime char}.
\end{proof}

Let $\mathcal{X}$ be the stack that parameterizes triples $(E,\nabla,\Psi)$ such that
\begin{itemize}
\item $(E,\nabla)$ is a tame $G$-local systems with zero $p$-curvature,
\item $\Psi$ is a horizontal section of ${\rm Ad}(E)\otimes Fr^*\mathscr{L}'$
\end{itemize}
Clearly, $\mathcal{X}$ is an algebraic stack over $B_{\mathscr{L}'}$.

\begin{thm}\label{first structure theorem for G}
There exists a canonical isomorphism of stacks over $B_{\mathscr{L}'}$:
\begin{align*}
\mathcal{H}\times^{\mathcal{A}}\mathcal{X}\rightarrow {\rm Locsys}^{tame}_{G}.
\end{align*}
Similarly, there exists a canonical isomorphism of stacks over $B^{\rm ext}_{\mathscr{L}'}$:
\begin{align*}
\mathcal{H} \times^{\mathcal{A}^{\rm ext}_{0}} \mathcal{X}^{\rm ext} \rightarrow {\rm Locsys}^{tame}_{G} \times_{B_{\mathscr{L}'}} B^{\rm ext}_{\mathscr{L}'},
\end{align*}
where $\mathcal{A}^{\rm ext}_{0}$ and $\mathcal{X}^{\rm ext}$ stands for the base change of $\mathcal{A}_{0}$ and $\mathcal{X}$ to $B^{\rm ext}_{\mathscr{L}'}$.
\end{thm}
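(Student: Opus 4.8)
The plan is to build the morphism $\Phi\colon \mathcal{H}\times^{\mathcal{A}}\mathcal{X}\to {\rm Locsys}^{tame}_{G}$ out of the twisting action (\ref{action on locsys}), to identify its effect on $p$-curvatures through the tautological section $\tau'$, and then to invert it fibrewise over $B_{\mathscr{L}'}$ after rigidifying residues over $B^{\rm ext}_{\mathscr{L}'}$.

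First I would define $\Phi$. Recall that a point of $\mathcal{H}$ over $b'\in B_{\mathscr{L}'}$ is a tame $J^p$-connection $(P,\nabla_P)$ whose $p$-curvature equals the value $\tau'(b')$ of the tautological section, while a point of $\mathcal{X}$ over $b'$ is a triple $(E,\nabla,\Psi)$ with $(E,\nabla)$ of vanishing $p$-curvature and $\Psi$ a horizontal section of ${\rm Ad}(E)\otimes Fr^*\mathscr{L}'$ whose Hitchin image is $b'$. Using the horizontal homomorphism $a_{E,\Psi}\colon J_{b^p}\to {\rm Aut}(E)$ of Lemma~\ref{horizontal regular centralizer}, I set $\Phi((P,\nabla_P),(E,\nabla,\Psi)):=(a_{E,\Psi})_{*}P\otimes E$, the twisted tame $G$-connection of the action (\ref{action on locsys}). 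The crucial computation is that its $p$-curvature is $a_{E,\Psi}(\tau'(b'))$ plus the $p$-curvature of $\nabla$; since the latter vanishes and the defining property of the tautological section $\tau$ is that $a_{E,\Psi}$ carries $\tau'(b')$ to $\Psi$, the $p$-curvature of $\Phi(\dots)$ is exactly $\Psi$, with Hitchin image $b'$. Thus $\Phi$ lies over $B_{\mathscr{L}'}$, and the associativity of the twisting construction of \cite[A.5]{Nonabelian Hodge prime char} shows that it intertwines the torsor action of $\mathcal{A}$ on $\mathcal{H}$ with the action (\ref{action on locsys}) of $\mathcal{A}$ on ${\rm Locsys}^{tame}_{G}$, so $\Phi$ descends to the contracted product.

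To invert $\Phi$, I would work over the étale cover $B^{\rm ext}_{\mathscr{L}'}\to B_{\mathscr{L}'}$, where the residue data of every connection is split and $\mathcal{H}$ becomes an $\mathcal{A}^{\rm ext}_{0}$-torsor (Lemma~\ref{torsors over group stack}); the statement over $B_{\mathscr{L}'}$ then follows by descent, the passage from $\mathcal{A}$ to $\mathcal{A}^{\rm ext}_{0}$ being precisely the rigidification of the residue torsor. Given a tame $G$-connection $(E,\nabla)$ with $h_p(E,\nabla)=b'$ and $p$-curvature $\Psi$, Corollary~\ref{gerbe of fix p curvature} (together with Lemma~\ref{surjective for residual}) produces, étale-locally on $B^{\rm ext}_{\mathscr{L}'}$, a connection $(P,\nabla_P)\in\mathcal{H}_{b'}$ whose $p$-curvature is $\tau'(b')$ and whose residue matches that prescribed by $\Psi$; twisting by $(P,\nabla_P)^{-1}$ then cancels the $p$-curvature, since its contribution is $-a_{E,\Psi}(\tau'(b'))+\Psi=0$, so $(a_{E,\Psi})_{*}P^{-1}\otimes E$ has vanishing $p$-curvature and the triple $\bigl((P,\nabla_P),\,(a_{E,\Psi})_{*}P^{-1}\otimes E,\,\Psi\bigr)$ maps to $(E,\nabla)$ under $\Phi$. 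Two choices of $(P,\nabla_P)$ differ by an element of $\mathcal{A}^{\rm ext}_{0}$, so the resulting point of the contracted product is canonical, and this assignment is inverse to $\Phi$.

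The main obstacle is the essential surjectivity of this inversion, namely that the $p$-curvature of an arbitrary tame $G$-connection can be realized by a $J^p$-connection drawn from $\mathcal{H}$; this is exactly where the gerbe description of connections with prescribed $p$-curvature and residue (Corollary~\ref{gerbe of fix p curvature}), the surjectivity on residues (Lemma~\ref{surjective for residual}), and the torsor property of $\mathcal{H}$ (Lemma~\ref{torsors over group stack}) are indispensable. Once the construction is seen to be bijective on points over $B^{\rm ext}_{\mathscr{L}'}$, the smoothness of ${\rm Locsys}^{tame}_{J^p}$ over $\mathscr{B}_{\mathscr{L}'}$ (Lemma~\ref{smoothness of locsys}) promotes $\Phi$ to an isomorphism of stacks, and the $\mathcal{A}$-equivariance guarantees that both $\Phi$ and its inverse descend compatibly from $B^{\rm ext}_{\mathscr{L}'}$ to $B_{\mathscr{L}'}$, yielding the two asserted isomorphisms.
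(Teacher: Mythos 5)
Your proposal is correct and follows essentially the same route as the paper: the forward map is the twisting action (\ref{action on locsys}) combined with the tautological-section computation of the $p$-curvature, and the inverse is obtained by twisting with the inverse of a section of $\mathcal{H}$ (the paper phrases this as the map $\mathcal{H}(-\tau')\times^{\mathcal{A}}{\rm Locsys}^{tame}_{G}\to\mathcal{X}$), with essential surjectivity supplied by the torsor property of Lemma~\ref{torsors over group stack} and compatibility with $h_p$ by Lemma~\ref{action over phitchin}. Your only organizational difference is passing through the \'etale cover $B^{\rm ext}_{\mathscr{L}'}$ and descending, whereas the paper inverts directly over $B_{\mathscr{L}'}$ using the $\mathcal{A}$-torsor structure; both are fine.
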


\begin{proof}
The morphism $\mathcal{H}\times^{\mathcal{A}}\mathcal{X}\rightarrow {\rm Locsys}^{tame}_{G}$ is induced by the action of ${\rm Locsys}^{tame}_{J^p}$ on ${\rm Locsys}^{tame}_{G}$ as in \eqref{action on locsys}. The inverse morphism is given by:
\begin{align*}
\mathcal{H}(-\tau')\times^{\mathcal{A}} {\rm Locsys}^{tame}_{G}  \rightarrow \mathcal{X}.
\end{align*}
where $\mathcal{H}(-\tau')$ is the pullback of ${\rm Locsys}^{tame}_{J^p}$ to $B_{\mathscr{L}'}$ via the section $B_{\mathscr{L}'} \xrightarrow{-\tau'} \mathscr{B}_{\mathscr{L}'}$. Given that we have established Lemma~\ref{torsors over group stack} and Lemma~\ref{action over phitchin}, the proof that these two morphisms are inverse of each other is almost the same as the proof of \cite[Proposition 3.9 and Theorem 3.12]{Nonabelian Hodge prime char} based on the local study in \S\ref{sect local}. The situation for $\mathcal{X}^{\rm ext}$ is similar.
\end{proof}

\subsection{Structure of $\mathcal{X}$}\label{subsect structure}
We will give a description of $\mathcal{X}$ in terms of $X'$. It turns out that the structure of $\mathcal{X}$ depends on the residue of the logarithmic connection.
\begin{lem}
Let $(E,\nabla,\Psi)$ be an element in $\mathcal{X}$. For each point $x \in D$, the residue of $\nabla$ at $x$ is a rational semisimple element.
\end{lem}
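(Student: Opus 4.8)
The plan is to reduce the assertion to a purely local computation on the formal disc around $x$ and then quote the classification of connections with vanishing $p$-curvature from \S\ref{sect local}. Since the residual is computed locally and is a conjugacy invariant of the connection, I would restrict $(E,\nabla)$ to $\mathbb{D}_x$, trivialise $E$, and write
\begin{align*}
\nabla=d+A\tfrac{dz}{z},\qquad A=\sum_{i\geq 0}a_i z^i\in\mathfrak{g}(\mathcal{O}),
\end{align*}
so that the residual at $x$ is $a_0$. Decompose $a_0=\tau+\sigma+n$ into its rational semisimple, irrational semisimple, and nilpotent parts as in \S\ref{sect pre}. Note that $\Psi$ plays no role here; the only hypothesis we use is that $(E,\nabla)$ has zero $p$-curvature.

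First I would put $\nabla$ into standard form using Lemma~\ref{standard form 1}. The gauge transformation produced there is a product $g=g_k\cdots g_1 g_0$ with $g_0=1$ and each $g_i$ in the $i$-th congruence subgroup, hence $g$ lies in the kernel of the evaluation map $G(\mathcal{O})\to G$; consequently it leaves the residual $a_0$ unchanged, and since the $p$-curvature transforms by ${\rm Ad}(g)$, the transformed connection still has vanishing $p$-curvature. After this reduction each $a_i$ lies in the generalized eigenspace of $[a_0,-]$ with eigenvalue $i$.

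The key step is to observe that standard form already forces $A\in\mathfrak{g}'_\tau(\mathcal{O})$. On a root space $\mathfrak{g}_\alpha$ the semisimple part of $[a_0,-]$ acts by $\alpha(\tau)+\alpha(\sigma)$; for this to equal the image of the integer $i$ in $k$ one needs $\alpha(\sigma)\in\mathbb{F}_p$, which by the definition of the irrational part forces $\alpha(\sigma)=0$ and $\alpha(\tau)=i$. Hence the generalized $i$-eigenspace of $[a_0,-]$ is contained in that of $[\tau,-]$, so each $a_i$ lies in the generalized $i$-eigenspace of $[\tau,-]$, i.e. $A\in\mathfrak{g}'_\tau(\mathcal{O})$ with $\tau$ the rational part of $a_0$. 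I expect this eigenspace comparison to be the only delicate point of the argument, since it is exactly where the distinction between the rational and irrational semisimple parts of the residual is needed.

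With this in hand the hypotheses of Lemma~\ref{normal form for connection with zero p curvature} are met, and I would apply it directly: a connection $d+A\tfrac{dz}{z}$ with $A\in\mathfrak{g}'_\tau(\mathcal{O})$, $\tau$ the rational part of $a_0$, and vanishing $p$-curvature satisfies $A=\tau$. In particular $a_0=\tau\in\mathfrak{t}_{\mathbb{F}_p}$, which is rational semisimple, proving the claim. (As a by-product the argument shows that zero $p$-curvature in fact kills the irrational and nilpotent parts of the residual as well, not merely that $a_0$ is rational semisimple.)
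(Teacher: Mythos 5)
Your proof is correct and follows the route the paper intends: the paper's own proof is the single sentence ``the condition that $\nabla$ has $p$-curvature zero implies this lemma,'' and your argument supplies exactly the missing details by localizing at $x$, invoking Lemma~\ref{standard form 1}, comparing generalized eigenspaces of $[a_0,-]$ and $[\tau,-]$ via the rational/irrational decomposition, and concluding with Lemma~\ref{normal form for connection with zero p curvature}. The eigenvalue comparison you flag as the delicate point is indeed the only substantive step, and it is handled correctly (modulo the paper's standing convention that the semisimple part of $a_0$ is conjugated into $\mathfrak{t}$, which a constant gauge transformation achieves without affecting the conclusion).
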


\begin{proof}
Since $\nabla$ has zero $p$-curvature, Lemma \ref{normal form for connection with zero p curvature} implies the result.
\end{proof}

Let $\tau \in\mathfrak{t}$ be a rational semisimple element, which can be considered as a representative of the corresponding conjugacy class. Any such element $\tau$ defines a natural weight $\theta_\tau$ parahoric subgroup $G'_{\tau}$ of $G$ over $X'$ as we discussed in \S\ref{sect local}. Let $(\theta_\tau)_{*}\mathcal{O}_{X}(x)$ be the corresponding $T$-torsor. Since $\mathcal{O}_{X}(x)$ is equipped with a natural connection with a pole at $x$, the $T$-torsor $(\theta_\tau)_{*}\mathcal{O}_{X}(x)$ is also equipped with a logarithmic connection $\nabla_\tau$ such that the residue is $\tau$. Let $G'_{\tau}$ be the group of automorphisms of the tame $G$-local system $((\theta_\tau)_{*}\mathcal{O}_{X}(x),\nabla_\tau)$. Lemma~\ref{parahoric of rational semisimple} implies that $G'_{\tau}$ can be identified with a parahoric group scheme over $X'$ such that the Levi factor is equal to the centralizer of $\tau$ in $G$.

Now we take a collection of rational semisimple elements ${\boldsymbol\tau}:=\{\tau_x, x \in D\}$. Let ${\rm Locsys}^{tame}_{G,{\boldsymbol\tau}} \subseteq {\rm Locsys}^{tame}_{G}$ be the stack classifying tame $G$-local systems $(E,\nabla)$ on $X$ such that
\begin{itemize}
\item $\nabla$ is of zero $p$-curvature;
\item the residue of $\nabla$ at $x$ is conjugate to $\tau_x$ for each point $x \in D$.
\end{itemize}
Denote by $G'_{\boldsymbol\tau}$ the corresponding parahoric group scheme over $X'$.

\begin{lem}\label{parahoric bundle}
The stack ${\rm Locsys}^{tame}_{G,{\boldsymbol\tau}}$ is equivalent to the stack ${\rm Bun}_{G'_{\boldsymbol\tau}}$ of $G'_{\boldsymbol\tau}$-torsors over $X'$.
\end{lem}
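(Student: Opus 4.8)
The plan is to build the identification from Cartier descent along the relative Frobenius $Fr: X \to X'$, using the local normal form at the poles to produce the parahoric structure. Recall that for a smooth $k$-scheme $S$ the functor sending a $G$-connection with vanishing $p$-curvature to its sheaf of horizontal sections gives an equivalence onto ${\rm Bun}_G(S')$, with quasi-inverse $F \mapsto (Fr^*F, \nabla_{\mathrm{can}})$ (compare the appendix of \cite{Nonabelian Hodge prime char}); this equivalence is functorial in $S$, compatible with restriction to opens, and carries horizontal isomorphisms of connections to isomorphisms of torsors. I will glue this descent over $X \setminus D$ with a parahoric description over each formal disc $\mathbb{D}_x$, $x \in D$.

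First I would apply the descent over $U := X \setminus D$. On $U$ the connection $\nabla$ is regular with zero $p$-curvature, so it descends to a $G$-torsor on $U' = X' \setminus D'$, yielding an equivalence between the restriction of ${\rm Locsys}^{tame}_{G,\boldsymbol\tau}$ to $U$ and ${\rm Bun}_G(U')$. Next I would analyze a formal neighbourhood of each $x \in D$, where $\nabla$ has a first-order pole with rational semisimple residual $\tau_x$ and naive descent fails at the puncture. Here I invoke Lemma~\ref{normal form for connection with zero p curvature}: over a field, and more generally over any Artinian test ring, a tame connection with zero $p$-curvature and residual $\tau_x$ is gauge-equivalent by an element of $G(\mathcal{O})$ to the model $d + \tau_x \frac{dz}{z}$. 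By Lemma~\ref{parahoric of rational semisimple} the gauge-automorphism group of this model is $G'_{\tau_x}(\mathcal{O})$, a parahoric subgroup over the Frobenius-twisted disc $\mathbb{D}'_x$. Consequently the groupoid of such local connections is canonically equivalent to the groupoid of $G'_{\tau_x}$-torsors over $\mathbb{D}'_x$; both carry a single isomorphism class with automorphisms $G'_{\tau_x}(\mathcal{O})$, functorially in the test ring. Moreover, restricting to the punctured disc $(\mathbb{D}'_x)^{\times}$ makes this equivalence compatible with the descent of the previous step, since $G'_{\tau_x}$ restricts to $G$ over the punctured disc.

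Finally I would glue. By construction $G'_{\boldsymbol\tau}$ is obtained by gluing $G$ over $U'$ with the local parahoric groups $G'_{\tau_x}$ over the discs $\mathbb{D}'_x$, so a $G'_{\boldsymbol\tau}$-torsor on $X'$ is exactly a $G$-torsor on $U'$ together with $G'_{\tau_x}$-torsors on the $\mathbb{D}'_x$ and identifications of the underlying $G$-torsors over the punctured discs $(\mathbb{D}'_x)^{\times}$. The two preceding steps produce precisely this gluing datum from an object of ${\rm Locsys}^{tame}_{G,\boldsymbol\tau}$, functorially in isomorphisms, and the procedure reverses by pulling a $G'_{\boldsymbol\tau}$-torsor back along $Fr$ and equipping it with its canonical connection. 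This gives the asserted isomorphism of stacks.

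The step I expect to demand the most care is the compatibility over the overlaps $(\mathbb{D}'_x)^{\times}$: one must check that the $G$-torsor obtained from Cartier descent over $U'$ and the $G$-torsor underlying the local parahoric torsor agree there, together with their descent isomorphisms. This reduces to the functoriality of Cartier descent under restriction from $\mathbb{D}_x$ to the punctured disc $\mathbb{D}_x^{\times}$, under which the normal-form gauge transformation of Lemma~\ref{normal form for connection with zero p curvature} becomes an ordinary Frobenius-descent isomorphism. Once this is in place, the global equivalence follows by faithfully flat descent along the two-chart cover $U' \cup \bigcup_{x} \mathbb{D}'_x$ of $X'$.
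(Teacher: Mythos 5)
Your argument is correct and rests on the same three ingredients as the paper's own proof: Cartier descent away from $D$, Lemma~\ref{normal form for connection with zero p curvature} (including its Artinian-ring version via Lemma~\ref{standard form over artinian rings}) to rigidify the connection at each puncture, and Lemma~\ref{parahoric of rational semisimple} identifying the automorphism group of the model $d+\tau_x\frac{dz}{z}$ with the parahoric group $G'_{\tau_x}(\mathcal{O})$. The paper packages the same data slightly differently---it forms the scheme ${\rm Iso}\bigl(((\theta_\tau)_*\mathcal{O}_X(x),\nabla_\tau),(E,\nabla)\bigr)$ and shows it is smooth and surjective over $X'$ (using Lemma~\ref{criterion for smoothness over a curve}), hence a $G'_{\boldsymbol\tau}$-torsor---but this is just the torsor-theoretic reformulation of your chart-by-chart gluing, so the two proofs are essentially the same.
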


\begin{proof}
It is enough to prove this lemma for a single point $D=\{x\}$, and let $\tau$ be the given rational semisimple element at $x$. As we stated above, the $G$-torsor $(\theta_\tau)_{*}O_{X}(x)$ has a natural logarithmic connection $\nabla_\tau$ such that the residue is equal to $\tau$ (under conjugation) and the $p$-curvature is zero. We will prove that for any $(E,\nabla) \in {\rm Locsys}^{tame}_{G,\tau}$, we have that $(E,\nabla)$ is locally isomorphic to $((\theta_\tau)_{*}O_{X}(x),\nabla_\tau)$ as tame local systems on $X$, which will imply this lemma.

Let ${\rm Iso}\left( ((\theta_\tau)_*O_{X}(x),\nabla_\tau), (E,\nabla) \right)$ be the group of isomorphisms between $((\theta_\tau)_*O_{X}(x),\nabla_\tau)$ and $(E,\nabla)$. The isomorphism group ${\rm Iso}$ is an affine scheme over $X'$. In fact, ${\rm Iso}$ is smooth and maps surjectively over $X'$. Away from $x$, this follows from Cartier descent. Lemma~\ref{normal form for connection with zero p curvature} implies that $x'$ is in the image of the morphism ${\rm Iso} \rightarrow X'$. The smoothness of ${\rm Iso}$ over $x'$ is a consequence of Lemma~\ref{standard form over artinian rings}, Lemma~\ref{normal form for connection with zero p curvature} and the following Lemma~\ref{criterion for smoothness over a curve}.
\end{proof}

\begin{lem}\label{criterion for smoothness over a curve}
Let $U$ be a finite type affine scheme over a curve $X$. Given $x \in X$, let $z$ be a local coordinate at $x$. Then $U$ is smooth over $x$ if the functor
\begin{equation}
H(R)={\rm Hom}_{X}({\rm Spec}(R[[z]]),Y)
\end{equation}
on Artinian local algebras over $k$ is formally smooth.
\end{lem}

\begin{proof}
Let $R$, $R'$ be two Artinian local algebras over $k$ such that we have a closed embedding $\Spec(R)\rightarrow\Spec(R')$. Given a diagram
$$
\xymatrix{
 & & Y\ar[d]\\
\Spec(R) \ar[r] \ar[rru]^{f} & \Spec(R') \ar[r] \ar@{-->}[ru]^{f'} & X \ ,
}
$$
we need to show that $f$ lifts to $f'$. Composing $f$ with ${\rm Spec}(R[[z]])\rightarrow \Spec(R)$, one can view $f$ as an element in $H(R)$. If $H$ is formally smooth, then $f$ lifts to $f'\in H(R')$. Now the morphism $\Spec(R')\rightarrow X$ endows $\Spec(R')$ with the structure as a subscheme of ${\rm Spec}(R'[[z]])$. Therefore, one finds a lift of $f$ to $f'$
\end{proof}

Let $\mathcal{X}_{\boldsymbol\tau}$ be the substack of $\mathcal{X}$, which parametrizes triples $(E,\nabla,\Psi)$ such that the residue of $\nabla$ at $x$ is conjugate to $\tau_x$.

\begin{prop}\label{structure of X}
We have
\begin{align*}
\mathcal{X}_{\boldsymbol\tau} \cong {\rm Higgs}^{tame}_{G'_{\boldsymbol\tau}},
\end{align*}
and the stack $\mathcal{X}_{\boldsymbol\tau}$ is an open substack of $\mathcal{X}$. Hence $\mathcal{X}$ is the disjoint union of $\mathcal{X}_{\boldsymbol\tau}$, where the union ranges over the all rational semisimple conjugacy classes.
\end{prop}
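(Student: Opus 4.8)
The plan is to build the isomorphism $\mathcal{X}_{\boldsymbol\tau}\cong{\rm Higgs}^{tame}_{G'_{\boldsymbol\tau}}$ out of the forgetful morphism $(E,\nabla,\Psi)\mapsto(E,\nabla)$ together with a Cartier--descent analysis of the datum $\Psi$. By the preceding lemma the residual of $\nabla$ at each $x\in D$ is rational semisimple, so an object of $\mathcal{X}_{\boldsymbol\tau}$ has $(E,\nabla)\in{\rm Locsys}^{tame}_{G,\boldsymbol\tau}$; Lemma~\ref{parahoric bundle} identifies this stack with ${\rm Bun}_{G'_{\boldsymbol\tau}}$, sending $(E,\nabla)$ to a $G'_{\boldsymbol\tau}$--torsor $E'$ on $X'$. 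Thus it remains to show that, for such $(E,\nabla)$, the space of horizontal sections $\Psi$ of ${\rm Ad}(E)\otimes Fr^*\mathscr{L}'$ is canonically $H^0(X',{\rm Ad}(E')\otimes\mathscr{L}')$, functorially in $E'$; granting this, the assignment $(E,\nabla,\Psi)\mapsto(E',\phi')$ with $\phi'$ the Cartier descent of $\Psi$ is the desired isomorphism of stacks.

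The heart of the matter is the sheaf identification $({\rm Ad}(E)\otimes Fr^*\mathscr{L}')^{\nabla}\cong{\rm Ad}(E')\otimes\mathscr{L}'$ on $X'$, where on the left one takes sections horizontal for the adjoint connection induced by $\nabla$, using that the canonical connection on $Fr^*\mathscr{L}'$ is flat. Away from $D$ this is ordinary Cartier descent: $\nabla$ has vanishing $p$-curvature, $E$ descends to $E'|_{X'\setminus D'}$, and $Fr^*\mathscr{L}'$ descends to $\mathscr{L}'$, so the horizontal sections descend to ${\rm Ad}(E')\otimes\mathscr{L}'$ there. At a point $x\in D$ I would use Lemma~\ref{normal form for connection with zero p curvature} to put $\nabla$ in the local model $d+\tau\frac{dz}{z}$, decompose ${\rm Ad}(E)=\bigoplus_\lambda\mathfrak{g}_{[\lambda]}\otimes\mathcal{O}_z$ into eigenspaces of ${\rm ad}(\tau)$, and solve the horizontality equation eigenspace by eigenspace: a section $f(z)\,v\,\nu$ with $v\in\mathfrak{g}_{[\lambda]}$ and $\nu$ the horizontal generator of $Fr^*\mathscr{L}'$ is horizontal exactly when $zf'=\lambda f$, i.e. when $f\in z^{\lambda}\mathcal{O}_{z^{p}}$ (with $0\le\lambda<p$), which is the same condition that defines $\mathfrak{g}'_{\tau}(\mathcal{O}_z)=\bigoplus_i\mathfrak{g}_{[i]}z^{i}$ as the Lie algebra of horizontal automorphisms. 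Comparing this with the identification of $G'_{\tau}(\mathcal{O}_z)$ with the parahoric subgroup of weight $\theta_\tau/p$ over $\mathbb{D}'_x$ from Lemma~\ref{parahoric of rational semisimple}, these horizontal sections are precisely ${\rm Ad}(E')\otimes\mathscr{L}'$ near $x'$. To obtain the statement as an isomorphism of stacks one runs the same argument over an Artinian base using Lemma~\ref{standard form over artinian rings} and the family version of Lemma~\ref{normal form for connection with zero p curvature}. Matching the pole orders and weight shifts on the two sides is the step that requires the most care, and is where I expect the main technical obstacle to lie.

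Finally, for the decomposition I would argue that the residual class is a locally constant invariant on $\mathcal{X}$. The possible values $\boldsymbol\tau$ form a finite set, since a rational semisimple class is a $W$--orbit in the finite set $\mathfrak{t}_{\mathbb{F}_p}$. Local constancy follows from the \'etaleness of the Artin--Schreier map $\ppower$ in diagram~\eqref{key diagram for hitchin base}: on $\mathcal{X}$ the $p$--curvature, hence its residue, is fixed, so the residue of $\nabla$ is constrained to a fibre of $\ppower$, which is finite \'etale over $B_{\mathscr{L}'}$ (equivalently, the condition $\tau^{[p]}=\tau$ cuts out an \'etale-over-$k$ locus of residues). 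Consequently each $\mathcal{X}_{\boldsymbol\tau}$ is open and closed in $\mathcal{X}$, and $\mathcal{X}=\bigsqcup_{\boldsymbol\tau}\mathcal{X}_{\boldsymbol\tau}$. Combined with the isomorphism established above, this yields both assertions of the proposition.
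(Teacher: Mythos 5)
Your proposal is correct and follows essentially the same route as the paper: Lemma~\ref{parahoric bundle} converts $(E,\nabla)$ into a $G'_{\boldsymbol\tau}$-torsor $E'$, Cartier descent reduces the identification of horizontal sections $\Psi$ with ${\rm Ad}(E')\otimes\mathscr{L}'$ to the local model $d+\tau_x\frac{dz}{z}$ at $x\in D$, where the horizontality equation singles out exactly ${\rm Lie}(G'_{\tau_x})(\mathcal{O}_{x'})$ — your eigenspace-by-eigenspace computation is just an expanded version of the paper's one line. The only cosmetic difference is in the openness claim, where you invoke the \'etaleness of the Artin--Schreier map to see that the residue is locally constant, while the paper cites the deformation-stability statement of Lemma~\ref{normal form for connection with zero p curvature}; these amount to the same rigidity of the rational semisimple residue.
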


\begin{proof}
Let $(E,\nabla,\Psi)\in\mathcal{X}_{\boldsymbol\tau}$ be an element. By Lemma~\ref{parahoric bundle}, the pair $(E,\nabla)$ determines a unique $G'_{\boldsymbol\tau}$-torsor $E'$. Thus, we only need to show that if $\Psi \in H^0(X,{\rm Ad}(E) \otimes Fr^* \mathscr{L})$ is a horizontal section, then $\Psi\in{\rm Ad}(E')\otimes \mathscr{L}'$. Using Cartier descent again, one only needs to prove this statement over the formal disc at $x$, so one may assume that the connection is of the form $d+\tau_x\frac{dz}{z}$. An element $A\frac{dz}{z}\in\mathfrak{g}(O_{x})\frac{dz}{z}$ is horizontal if and only if $dA+[A,\tau_x]\frac{dz}{z}=0$. By the definition of $G'_{\tau_x}$, this implies that $A\in \mathfrak{g}'_\tau(\mathcal{O})$. This finishes the proof for the isomorphism. The fact that $\mathcal{X}_{\boldsymbol\tau}$ is an open substack follows from the fact that the condition that the residue is fixed is stable under deformations by Lemma \ref{normal form for connection with zero p curvature}.
\end{proof}

\subsection{Tame Parahoric Nonabelian Hodge Correspondence}
Now we consider the more general case of tame parahoric local systems. Let $\mathcal{P}_{\boldsymbol\theta}$ be the parahoric group scheme on $X$ corresponding to a given collection of tame weights $\boldsymbol\theta$. The first is the analogue of Theorem~\ref{first structure theorem for G}. Let $\mathcal{X}_{\mathcal{P}_{\boldsymbol\theta}}$ be the stack parameterizes triples $(E,\nabla, \Psi)$ such that
\begin{itemize}
\item $(E,\nabla)$ is a tame $\mathcal{P}_{\boldsymbol\theta}$-local system with zero $p$-curvature,
\item $\Psi$ is a horizontal section in ${\rm Ad}(E)\otimes Fr^*\mathscr{L}'$.
\end{itemize}
\begin{thm}\label{parahoric non hod corr}
There exists a canonical isomorphism of stacks over $B_{\mathscr{L}'}$:
\begin{align*}
\mathcal{H}\times^{\mathcal{A}}\mathcal{X}_{\mathcal{P}_{\boldsymbol\theta}}\rightarrow {\rm Locsys}^{tame}_{\mathcal{P}_{\boldsymbol\theta}}.
\end{align*}
\end{thm}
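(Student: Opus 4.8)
The plan is to follow the strategy of Theorem~\ref{first structure theorem for G} essentially verbatim, replacing $G$ by the parahoric group scheme $\mathcal{P}_{\boldsymbol\theta}$ throughout and invoking the parahoric analogues of the structural inputs assembled earlier in this section. The forward morphism $\mathcal{H}\times^{\mathcal{A}}\mathcal{X}_{\mathcal{P}_{\boldsymbol\theta}}\rightarrow {\rm Locsys}^{tame}_{\mathcal{P}_{\boldsymbol\theta}}$ is the one induced by the action \ref{action on locsys} of ${\rm Locsys}^{tame}_{J^p}$ on ${\rm Locsys}^{tame}_{\mathcal{P}_{\boldsymbol\theta}}$: a point of $\mathcal{H}$ is in particular a tame $J^p$-connection with vanishing $p$-curvature, which we may use to twist a triple $(E,\nabla,\Psi)\in\mathcal{X}_{\mathcal{P}_{\boldsymbol\theta}}$, and Lemma~\ref{action over phitchin} (which holds for parahoric connections) guarantees that this preserves the $p$-Hitchin image, so the map is defined over $B_{\mathscr{L}'}$.

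First I would record the inputs that make the $G$-argument go through unchanged. Since $\mathcal{A}$, $\mathcal{A}_0$, $\mathcal{H}$, and $J^p$ are all built from the regular centralizer group scheme over $B_{\mathscr{L}'}$ and do not reference the structure group of the local systems, Lemma~\ref{torsors over group stack} applies verbatim: $\mathcal{H}$ is an $\mathcal{A}$-torsor over $B_{\mathscr{L}'}$ and an $(\mathcal{A}_0\times_{B_{\mathscr{L}'}}B^{\rm ext}_{\mathscr{L}'})$-torsor over $B^{\rm ext}_{\mathscr{L}'}$. The action of $J^p$ on parahoric local systems is legitimate because the regular centralizer maps to the automorphism group even for $\mathcal{P}_{\boldsymbol\theta}$-torsors (Lemma~\ref{regular centralizer map for parahoric groups}), and the resulting homomorphism $a_{E,\Psi}$ is horizontal (Lemma~\ref{horizontal regular centralizer}), the latter checked away from the support of $\boldsymbol\theta$ where $\mathcal{P}_{\boldsymbol\theta}\cong G$. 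Finally, the $p$-Hitchin fibration of Corollary~\ref{phitchin morphism for parahoric groups} organizes everything over the same base $B_{\mathscr{L}'}$.

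Next I would construct the inverse, exactly as in the proof of Theorem~\ref{first structure theorem for G}. It is the morphism
\begin{align*}
\mathcal{H}(-\tau')\times^{\mathcal{A}} {\rm Locsys}^{tame}_{\mathcal{P}_{\boldsymbol\theta}} \rightarrow \mathcal{X}_{\mathcal{P}_{\boldsymbol\theta}},
\end{align*}
where $\mathcal{H}(-\tau')$ is the pullback of ${\rm Locsys}^{tame}_{J^p}$ along $B_{\mathscr{L}'}\xrightarrow{-\tau'}\mathscr{B}_{\mathscr{L}'}$; twisting an arbitrary tame $\mathcal{P}_{\boldsymbol\theta}$-local system by such a $J^p$-connection kills its $p$-curvature while recording the original $p$-curvature as the horizontal section $\Psi$, so that the output lands in $\mathcal{X}_{\mathcal{P}_{\boldsymbol\theta}}$. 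The verification that the two morphisms are mutually inverse is then a formal consequence of the torsor structure above together with Lemma~\ref{action over phitchin}, following line by line Proposition 3.9 and Theorem 3.12 of \cite{Nonabelian Hodge prime char}.

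The one place requiring genuine attention, rather than mere transcription, is the local behavior at the marked divisor $D$. Everywhere on $X\setminus D$ the parahoric group scheme is canonically $G$, so all the twisting and horizontality statements reduce immediately to the already-established $G$-case. The content at each $x\in D$ is that the $J^p$-twist of a parahoric connection is again a connection for the same $\mathcal{P}_{\boldsymbol\theta}$, and that the inverse twist produces a horizontal section of ${\rm Ad}(E)\otimes Fr^*\mathscr{L}'$ compatible with the parahoric reduction; this is precisely where Lemma~\ref{regular centralizer map for parahoric groups} and the local analysis of \S\ref{sect local} are needed, and where, if one preferred, one could instead pass through the equivariant correspondence ${\rm Locsys}^{tame,{\boldsymbol\rho}}_G([Y/\Gamma])\cong {\rm Locsys}^{tame}_{\mathcal{P}_{\boldsymbol\theta}}(X)$ of Proposition~\ref{parahoric connection equivariant} and apply Theorem~\ref{first structure theorem for G} $\Gamma$-equivariantly on the cover $Y$. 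I expect this compatibility at $D$ to be the main obstacle, with the global torsor formalism being entirely parallel to the $G$-case.
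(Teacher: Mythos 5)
Your proposal is correct and takes essentially the same approach as the paper, whose entire proof is the one-line remark that the argument is the same as in Theorem~\ref{first structure theorem for G}; you have simply spelled out the parahoric inputs (Lemma~\ref{regular centralizer map for parahoric groups}, the parahoric versions of Lemma~\ref{horizontal regular centralizer} and Lemma~\ref{action over phitchin}, Corollary~\ref{phitchin morphism for parahoric groups}) that the paper assembles immediately before the theorem precisely so that the $G$-argument transcribes verbatim.
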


\begin{proof}
The action
\begin{align*}
{\rm Locsys}^{tame}_{J^p} \times {\rm Locsys}^{tame}_{\mathcal{P}_{\boldsymbol\theta}} \rightarrow   {\rm Locsys}^{tame}_{\mathcal{P}_{\boldsymbol\theta}}.
\end{align*}
given in \eqref{action on locsys} induces a morphism
\begin{align*}
    \mathcal{H} \times^{\mathcal{A}} \mathcal{X}_{\mathcal{P}_{\boldsymbol\theta}} \rightarrow {\rm Locsys}^{tame}_{\mathcal{P}_{\boldsymbol\theta}}.
\end{align*}
By the canonical section $\tau': B_{\mathscr{L}'} \rightarrow \mathscr{B}_{\mathscr{L}'}$ defined in \S\ref{subsect_tnhc}, the ``inverse" morphism is also given by
\begin{align*}
    \mathcal{H}(-\tau') \times^{\mathcal{A}} {\rm Locsys}^{tame}_G \rightarrow \mathcal{X}.
\end{align*}
Then, based on the local study in \S\ref{sect local}, the same argument as in \cite[Proposition 3.9 and Theorem 3.12]{Nonabelian Hodge prime char} finishes the proof of this theorem.
\end{proof}

Similar to \S\ref{subsect structure}, we will discuss the local structure of $\mathcal{X}_{\mathcal{P}_{\boldsymbol\theta}}$ in terms of $X'$. Let $Y$ be a covering of $X$ determined by the tame weights $\boldsymbol\theta$. Let $d$ be the degree of the covering, and denote by $\Gamma$ the Galois group. We identify logahoric $\mathcal{P}_{\boldsymbol\theta}$-Higgs bundles (resp. tame $\mathcal{P}_{\boldsymbol\theta}$-local systems) on $X$ as $(\Gamma,G)$-Higgs bundles (resp. tame $(\Gamma,G)$-local systems) on $Y$ with local type $\boldsymbol\rho$, where $\boldsymbol\rho$ is determined by $\boldsymbol\theta$ (see \S\ref{sect corresp}). We also fix a collection ${\boldsymbol \tau}=\{\tau_x \text{ }| \text{ }x \in D\}$ of rational semisimple elements in $\mathfrak{t}$. Let ${\rm Locsys}^{tame}_{\mathcal{P}_{\boldsymbol\theta}, {\boldsymbol\tau}}(X)$ be the stack parameterizes tame $\mathcal{P}_{\boldsymbol\theta}$-local systems over $X$ with zero $p$-curvature and that the residue of $\nabla$ at $x \in D$ has rational semisimple part $\tau_x$.

\begin{lem}
${\rm Locsys}^{tame}_{\mathcal{P}_{\boldsymbol\theta}, {\boldsymbol\tau}}(X)$ is isomorphic to ${\rm Bun}_{G'_{ d({\boldsymbol\theta}+{\boldsymbol\tau})} }(X')$.
\end{lem}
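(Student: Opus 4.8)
The plan is to deduce this parahoric statement from its reductive counterpart, Lemma~\ref{parahoric bundle}, by passing to the tamely ramified cover $Y\rightarrow X$ exactly as in the proof of Theorem~\ref{classification for parahoric connection}. First I would apply Proposition~\ref{parahoric connection equivariant} to identify ${\rm Locsys}^{tame}_{\mathcal{P}_{\boldsymbol\theta}}(X)$ with the stack of $\Gamma$-equivariant tame $G$-local systems of type $\boldsymbol\rho$ on $Y$, and then track how the two conditions cutting out ${\rm Locsys}^{tame}_{\mathcal{P}_{\boldsymbol\theta},{\boldsymbol\tau}}(X)$ transport across this equivalence. The vanishing of the $p$-curvature is a gauge-invariant condition that is preserved, since away from the ramification the correspondence is the pullback along the tame cover together with the gauge transformation by $\Delta=w^{d\theta}$, both of which preserve zero $p$-curvature. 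Following the local computation in the proof of Theorem~\ref{classification for parahoric connection}, the substitution $z=w^d$ sends a connection whose residual has rational semisimple part $\tau_x$ to one of the form $d(\theta_x+\tau_x+\sigma_x+\sum a_i w^i)\tfrac{dw}{w}$; hence the rational semisimple part of the residual over the point above $x$ becomes $d(\theta_x+\tau_x)$. Thus ${\rm Locsys}^{tame}_{\mathcal{P}_{\boldsymbol\theta},{\boldsymbol\tau}}(X)$ corresponds to the $\Gamma$-equivariant tame $G$-local systems on $Y$ with zero $p$-curvature whose residual over each ramification point has rational semisimple part $d(\theta_x+\tau_x)$.

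Next I would apply Lemma~\ref{parahoric bundle} over the curve $Y$ in a $\Gamma$-equivariant fashion. That lemma identifies tame $G$-local systems on $Y$ with zero $p$-curvature and prescribed rational semisimple residuals $d(\boldsymbol\theta+\boldsymbol\tau)$ with the stack of torsors over the Frobenius twist $Y'$ for the group scheme $G'_{d(\boldsymbol\theta+\boldsymbol\tau)}$. The point is that every ingredient of the proof of Lemma~\ref{parahoric bundle}---the standard model $((\theta_{\tau})_{*}\mathcal{O}(x),\nabla_\tau)$, the isomorphism scheme ${\rm Iso}$, and its smoothness and surjectivity over $Y'$ established via Lemma~\ref{standard form over artinian rings} and Lemma~\ref{normal form for connection with zero p curvature}---is functorial, so the $\Gamma$-action on $Y$ lifts canonically through the identification. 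This yields an equivalence between the $\Gamma$-equivariant objects of type $\boldsymbol\rho$ on $Y$ and the $\Gamma$-equivariant $G'_{d(\boldsymbol\theta+\boldsymbol\tau)}$-torsors on $Y'$.

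Finally I would descend from $Y'$ to $X'$. Because $z'=(w')^{d}$ realizes $Y'\rightarrow X'$ as the Frobenius twist of the original $\Gamma$-cover, the $\Gamma$-equivariant $G'_{d(\boldsymbol\theta+\boldsymbol\tau)}$-torsors on $Y'$ descend, by the same Balaji--Seshadri mechanism used for Theorem~\ref{equi for torsors} and Theorem~\ref{equivalence for higgs bundles}, to torsors on $X'$ for the $\Gamma$-descended group scheme. Lemma~\ref{group containment} identifies this descended group, locally the parahoric group $\mathcal{P}_{(\theta+\theta_\tau)/p}(\mathcal{O}_{z'})$, with the parahoric group scheme $G'_{d(\boldsymbol\theta+\boldsymbol\tau)}$ over $X'$. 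Composing the three equivalences gives the asserted identification ${\rm Locsys}^{tame}_{\mathcal{P}_{\boldsymbol\theta},{\boldsymbol\tau}}(X)\cong{\rm Bun}_{G'_{d(\boldsymbol\theta+\boldsymbol\tau)}}(X')$.

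The step I expect to be the main obstacle is the bookkeeping at the ramification points. One must verify that the $\Gamma$-equivariant type $\boldsymbol\rho$ imposed on $Y$ matches exactly the local type of the descended torsor on $X'$, and that the shift of the residual by $d\theta_x$ coming from the gauge by $\Delta$ is compatible with the weight $(\theta+\theta_\tau)/p$ appearing in Lemma~\ref{group containment}. Away from the branch divisor every identification reduces to Cartier descent, so all of the subtlety is concentrated in the formal discs, where it is resolved by the explicit local analysis already carried out in \S\ref{sect corresp} and \S\ref{sect local}.
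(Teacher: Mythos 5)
Your proposal follows the same three-step route as the paper's own proof: transport to $\Gamma$-equivariant local systems on $Y$ via Proposition~\ref{parahoric connection equivariant} together with the local computation from Theorem~\ref{classification for parahoric connection}, apply Lemma~\ref{parahoric bundle} in a $\Gamma$-equivariant fashion to get torsors on $Y'$, and descend to $X'$ by Theorem~\ref{equi for torsors}. The argument is correct and matches the paper; your added remarks on tracking the residual shift by $d\theta_x$ and on Lemma~\ref{group containment} only make explicit what the paper leaves implicit.
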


\begin{proof}
We go with the following diagram to prove this lemma.
\begin{center}
\begin{tikzcd}
{\rm Locsys}^{tame}_{\mathcal{P}_{\boldsymbol\theta}, {\boldsymbol\tau}}(X)  \arrow[d,dotted] \arrow[rr,"\text{Proposition~\ref{parahoric connection equivariant}}"]  & & {\rm Locsys}^{tame}_{G,d({\boldsymbol\theta} + {\boldsymbol\tau})}([Y/\Gamma]) \arrow[d, "\text{Lemma~\ref{parahoric bundle}}"] \\
{\rm Bun}_{G'_{{\boldsymbol\theta}+{\boldsymbol\tau}}}(X') \arrow[rr, "\text{Theorem~\ref{equi for torsors}}"] & & {\rm Bun}_{G'_{d({\boldsymbol\theta}+{\boldsymbol\tau}) } }([Y'/G])
\end{tikzcd}
\end{center}
In the proof of Theorem~\ref{classification for parahoric connection}, we show that
\begin{align*}
    {\rm Locsys}^{tame}_{\mathcal{P}_\theta,\tau}(\mathbb{D}_x ) \cong {\rm Locsys}^{tame}_{G, d(\theta+\tau)}([\mathbb{D}_y /\Gamma]).
\end{align*}
By Proposition~\ref{parahoric connection equivariant}, this local picture can be naturally generalized as follows:
\begin{align*}
{\rm Locsys}^{tame}_{\mathcal{P}_{\boldsymbol\theta}, {\boldsymbol\tau}}(X) \cong {\rm Locsys}^{tame}_{G,d({\boldsymbol\theta} + {\boldsymbol\tau})}([Y/\Gamma]).
\end{align*}
Lemma~\ref{parahoric bundle} gives the isomorphism ${\rm Locsys}^{tame}_{G,d({\boldsymbol\theta} + {\boldsymbol\tau})}([Y/\Gamma]) \cong {\rm Bun}_{G'_{d({\boldsymbol\theta}+{\boldsymbol\tau}) } }([Y'/G])$. Note that although the lemma is proved for $G$-bundles, the result can be naturally generalized to a $\Gamma$-equivariant version. Finally, by Theorem~\ref{equi for torsors}, we have the desired isomorphism.
\end{proof}

Let $\mathcal{X}_{\mathcal{P}_{\boldsymbol\theta}, {\boldsymbol\tau}}$ be the substack of $\mathcal{X}_{\mathcal{P}_{\boldsymbol\theta}}$ such that $(E,\nabla,\Psi) \in \mathcal{X}_{\mathcal{P}_{\boldsymbol\theta}, {\boldsymbol\tau}}$ if the residue of the logarithmic connection $\nabla$ at $x \in D$ is conjugate to $\tau_x$. Sometimes we use the notation $\mathcal{X}_{\mathcal{P}_{\boldsymbol\theta}, {\boldsymbol\tau}}(X)$ to emphasize that it is defined over $X$. Following the diagram below,
\begin{center}
\begin{tikzcd}
\mathcal{X}_{\mathcal{P}_{\boldsymbol\theta}, {\boldsymbol\tau}}(X)  \arrow[d,dotted, "\text{Proposition~\ref{structure of parahoric X}}"] \arrow[rr,"\text{Proposition~\ref{parahoric connection equivariant}}"]  & & \mathcal{X}_{G, d({\boldsymbol\theta}+{\boldsymbol\tau})}([Y/\Gamma]) \arrow[d, "\text{Proposition~\ref{structure of X}}"] \\
{\rm Higgs}^{tame}_{G'_{{\boldsymbol\theta}+{\boldsymbol\tau}}}(X')  & & {\rm Higgs}^{tame}_{G'_{d({\boldsymbol\theta}+{\boldsymbol\tau})}}([Y'/\Gamma]) \arrow[ll, "\text{Theorem~\ref{equivalence for higgs bundles}}"]
\end{tikzcd}
\end{center}
we apply similar arguments as in Proposition~\ref{structure of X} and give a description of the structure of $\mathcal{X}_{\mathcal{P}_{\boldsymbol\theta}, {\boldsymbol\tau}}$.
\begin{prop}\label{structure of parahoric X}
We have
\begin{align*}
\mathcal{X}_{\mathcal{P}_{\boldsymbol\theta}, {\boldsymbol\tau}} \cong {\rm Higgs}^{tame}_{G'_{{\boldsymbol\theta}+{\boldsymbol\tau}}}(X'),
\end{align*}
and the stack $\mathcal{X}_{\mathcal{P}_{\boldsymbol\theta}}$ is the disjoint union of all $\mathcal{X}_{\mathcal{P}_{\boldsymbol\theta}, {\boldsymbol\tau}}$, where ${\boldsymbol\tau}$ ranges over a set of representatives of rational semisimple conjugacy class of $\mathfrak{g}$ under the adjoint action of $Z_{G}(\zeta^{d\theta})$.
\end{prop}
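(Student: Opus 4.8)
The plan is to prove the isomorphism by composing the three equivalences displayed in the diagram preceding the statement, and then to obtain the disjoint decomposition by the same deformation argument used in Proposition~\ref{structure of X}. First I would transport the whole triple $(E,\nabla,\Psi)$ from the parahoric picture over $X$ to the $\Gamma$-equivariant picture over $Y$. By Proposition~\ref{parahoric connection equivariant}, a tame $\mathcal{P}_{\boldsymbol\theta}$-connection with zero $p$-curvature on $X$ corresponds to a $\Gamma$-equivariant $G$-connection of type $\boldsymbol\rho$ on $Y$; tracing the Gauge twist by $\Delta=w^{d\theta}$ exactly as in the proof of Theorem~\ref{classification for parahoric connection}, a connection whose residual at $x$ has rational semisimple part $\tau_x$ goes to an equivariant connection whose residual at the ramification point over $x$ has rational semisimple part $d(\theta_x+\tau_x)$. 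The horizontal section $\Psi$ of ${\rm Ad}(E)\otimes Fr^*\mathscr{L}'$ transports to a $\Gamma$-equivariant horizontal section on $Y$, since the $\Gamma$-action commutes with $Fr$. This identifies $\mathcal{X}_{\mathcal{P}_{\boldsymbol\theta},\boldsymbol\tau}(X)$ with $\mathcal{X}_{G,d(\boldsymbol\theta+\boldsymbol\tau)}([Y/\Gamma])$, the top arrow of the diagram.

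Next I would run the $\Gamma$-equivariant version of Proposition~\ref{structure of X} on $Y$. Because $d=|\Gamma|$ is prime to $p$, the stack $[Y/\Gamma]$ is tame, and every ingredient in the proof of Proposition~\ref{structure of X}---Lemma~\ref{parahoric bundle}, which via Cartier descent identifies a zero $p$-curvature connection of residual $d(\theta_x+\tau_x)$ with a $G'_{d(\boldsymbol\theta+\boldsymbol\tau)}$-torsor on $Y'$, together with the verification that a horizontal $\Psi$ lands in ${\rm Ad}(E')\otimes\mathscr{L}'$---is manifestly $\Gamma$-equivariant, the $\Gamma$-action commuting with the Frobenius $Y\to Y'$ and with all the local constructions. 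This gives $\mathcal{X}_{G,d(\boldsymbol\theta+\boldsymbol\tau)}([Y/\Gamma]) \cong {\rm Higgs}^{tame}_{G'_{d(\boldsymbol\theta+\boldsymbol\tau)}}([Y'/\Gamma])$. Finally, Theorem~\ref{equivalence for higgs bundles} for the covering $Y'\to X'$, combined with Lemma~\ref{group containment} identifying $G'_{d(\theta+\tau)}(\mathcal{O}_w)$ with the parahoric group $\mathcal{P}_{\frac{\theta+\theta_\tau}{p}}(\mathcal{O}_{z'})$ over $\mathbb{D}'_x$, descends this to ${\rm Higgs}^{tame}_{G'_{\boldsymbol\theta+\boldsymbol\tau}}(X')$; composing the three isomorphisms yields the asserted $\mathcal{X}_{\mathcal{P}_{\boldsymbol\theta},\boldsymbol\tau}\cong {\rm Higgs}^{tame}_{G'_{\boldsymbol\theta+\boldsymbol\tau}}(X')$.

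For the disjoint-union statement I would argue as in Proposition~\ref{structure of X}: by Lemma~\ref{normal form for connection with zero p curvature} the rational semisimple part of the residual is constant under deformation over an Artinian base, so the condition that it be conjugate to $d\tau_x$ is open and closed. Hence each $\mathcal{X}_{\mathcal{P}_{\boldsymbol\theta},\boldsymbol\tau}$ is an open-and-closed substack, and $\mathcal{X}_{\mathcal{P}_{\boldsymbol\theta}}$ is their disjoint union, where $\boldsymbol\tau$ ranges over representatives of rational semisimple classes under $Z_G(\zeta^{d\theta})$---this being the group acting on the relevant orbits by Lemma~\ref{correspondence between orbits}.

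The main obstacle I anticipate is the residual-and-weight bookkeeping across the two independent operations: the tame cover $Y\to X$, which multiplies weights by $d$, and the Frobenius twist $X\to X'$, which passes from connections to Higgs fields via $p$-curvature. One must verify that $\tau_x$ on $X$ really matches $d(\theta_x+\tau_x)$ on $Y$ after the $w^{d\theta}$-twist, and that, after Frobenius descent, the parahoric group $G'_{\boldsymbol\theta+\boldsymbol\tau}$ on $X'$ is the correct $\Gamma$-quotient of $G'_{d(\boldsymbol\theta+\boldsymbol\tau)}$ on $Y'$; the equivariant upgrade of Proposition~\ref{structure of X} is otherwise routine once tameness of $[Y/\Gamma]$ is in hand.
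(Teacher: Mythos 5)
Your proposal follows exactly the route the paper takes: the paper's proof consists of the commutative diagram composing Proposition~\ref{parahoric connection equivariant}, the $\Gamma$-equivariant version of Proposition~\ref{structure of X}, and Theorem~\ref{equivalence for higgs bundles}, together with the deformation argument from Lemma~\ref{normal form for connection with zero p curvature} for the disjoint-union statement, which is precisely what you do. Your write-up is in fact more explicit than the paper's (which only says ``apply similar arguments as in Proposition~\ref{structure of X}''), and your bookkeeping of the residual $\tau_x \mapsto d(\theta_x+\tau_x)$ under the $w^{d\theta}$-twist and the role of Lemma~\ref{group containment} in descending the parahoric group to $X'$ are both correct.
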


\bigskip
\noindent\small{\textsc{Department of Mathematics, University of Illinois at Urbana-Champaign}\\
1409 W. Green St, Urbana, IL 61801, USA}\\
\emph{E-mail address}: \texttt{lmaowis@gmail.com}

\bigskip
\noindent\small{\textsc{Department of Mathematics, South China University of Technology}\\
381 Wushan Rd, Tianhe Qu, Guangzhou, Guangdong, China}\\
\emph{E-mail address}:  \texttt{hsun71275@scut.edu.cn}

\end{document}